\tikzset{
v/.style={draw, fill, circle, minimum size=1.5mm, inner sep=0},
b/.style={draw , regular polygon,regular polygon sides=4, minimum size=1.5mm, inner sep=.5mm},
e/.style={very thick},
vs/.style={draw, fill, circle, minimum size=1mm, inner sep=0},
bs/.style={draw,  regular polygon,regular polygon sides=4, minimum size=2mm, inner sep=0mm},
es/.style={thick}
}
\newlength{\nodeheight}
\newlength{\nodewidth}
\numberwithin{thmcounter}{section}
\newaliascnt{thmauto}{thmcounter}
\newaliascnt{Defauto}{thmcounter}
\newaliascnt{exauto}{thmcounter}
\newaliascnt{lemauto}{thmcounter}
\newaliascnt{propauto}{thmcounter}
\newaliascnt{corauto}{thmcounter}
\newaliascnt{remauto}{thmcounter}
\newaliascnt{clmauto}{thmcounter}
\newtheorem{atheorem}{Theorem}
\newtheorem{acor}[atheorem]{Corollary}
\newtheorem{thm}[thmauto]{Theorem}
\newtheorem{lem}[lemauto]{Lemma}
\newtheorem{prop}[propauto]{Proposition}
\theoremstyle{definition}
\newtheorem{Def}[Defauto]{Definition}
\newtheorem{defn}[Defauto]{Definition}
\newtheorem{ex}[exauto]{Example}
\newtheorem{rem}[remauto]{Remark}
\numberwithin{equation}{section}
\DeclareMathOperator{\Tor}{Tor}
\DeclareMathOperator{\Ext}{Ext}
\providecommand{\id}{\ensuremath\mathrm{id}}
\DeclareMathOperator{\TL}{TL}
\DeclareMathOperator{\Br}{Br}
\providecommand{\fS}{\ensuremath\mathfrak{S}}
\providecommand{\CBr}{\ensuremath\mathcal C_{\Br}}
\providecommand{\tens}[1][{}]{\otimes_{#1}}
\renewcommand{\t}{\mathbbm{1}}
\newcommand{\bbC}{\mathbb{C}}
\title{The homology of the Brauer algebras}
\author{Rachael Boyd}
\address{Max Planck Institute for Mathematics, Bonn}
\email{rachaelboyd@mpim-bonn.mpg.de}
\author{Richard Hepworth}
\address{Institute of Mathematics, University of Aberdeen}
\email{r.hepworth@abdn.ac.uk}
\author{Peter Patzt}
\thanks{Peter Patzt was supported by the Danish National Research Foundation 
	through the Copenhagen Centre for Geometry and Topology (DNRF151) and 
	the European Research Council under the European Union’s Seventh 
	Framework Programme ERC Grant agreement ERC StG 716424 - CASe, PI Karim 
	Adiprasito.}
\address{Centre for Geometry and Topology, University of Copenhagen\newline Department of Mathematics, University of Oklahoma}
\email{ppatzt@ou.edu}
 \subjclass[2010]{
        20J06, %Cohomology of groups
        16E40 %(Co)homology of rings and associative algebras (e.g., Hochschild, cyclic, dihedral, etc.)
        (primary),
        20B30  	%Symmetric groups
        (secondary)
    }
    \keywords{Homology, homological stability, Brauer algebras} 
\begin{document}

\maketitle
\begin{abstract}
This paper investigates the homology of the Brauer algebras, interpreted as appropriate $\Tor$-groups, and shows that it is closely related to the homology of the symmetric group.
Our main results show that when the defining parameter~$\delta$ of the Brauer algebra is invertible, then the homology of the Brauer algebra is isomorphic to the homology of the symmetric group, and that when~$\delta$ is not invertible, this isomorphism still holds in a range of degrees that increases with $n$.
\end{abstract}

\setcounter{tocdepth}{1}
\tableofcontents

\section{Introduction}\label{Section: Introduction}

In this paper we study the homology of the Brauer algebra $\Br_n(R,\delta)$, interpreted as appropriate $\Tor$ groups.
We show that it is isomorphic to the homology of the symmetric group if the parameter $\delta$ is invertible, and that this holds in a range of degrees that increases with $n$ for general $\delta$.
Our methods stem largely from the theory of homological stability for families of groups, but with important and novel adaptations required for the algebraic setting.

\subsection{Homological stability and representation stability}

Homological stability is a long-studied notion that concerns the ho\-mo\-lo\-gy and cohomology of sequences of groups or spaces. 
A sequence $\{X_n\}$ of topological spaces or groups, equipped with maps $X_{n-1} \to X_{n}$, is said to satisfy homological stability if for all $i$ the induced maps $H_i(X_{n-1}) \to H_i(X_{n})$ are isomorphisms, when~$n$ is sufficently large compared to $i$.
Homological stability holds for symmetric groups \cite{N}, un\-or\-dered configuration spaces \cite{Arnold,McDuff,C}, the automorphism groups of free groups \cite{H,HVRational,HV}, mapping class groups \cite{Har,WahlUnoriented,HatcherWahl}, diffeomorphism groups of high-dimensional manifolds~\cite{GRW}, and general linear groups~\cite{Maazen,Charney,vdK}, among others.
Randal-Williams and Wahl~\cite{RW} recently gave a unified approach to studying homological stability for sequences of groups that assemble into a \emph{homogeneous category}.
Their theory covers many of the known examples, and includes a systematic construction of the chain complexes that are commonly used to prove homological stability.

In the past 10 years the closely-related concept of \emph{representation stability}~\cite{CF} has emerged, with the aim of proving stability phenomena when the terms~$X_n$ in the sequence are not simply modules or vector spaces, but representations of some sequence of groups.
In~\cite{PatztRepstab}, the third author built a \emph{stability category}, of the kind appearing in~\cite{RW}, for sequences of \emph{diagram  algebras}: the Brauer algebras, the Temperley-Lieb algebras, and the partition algebras. For the Brauer and partition algebras it is shown the associated categories are symmetric monoidal. Furthermore, for all three algebras the notion of representation stability from~\cite{CF} is generalised and proved for sequences of semisimple representations over these algebras.

Recent work of the first two authors has demonstrated that the techniques of homological stability can be applied to families of \emph{algebras}, where homology and cohomology are interpreted as appropriate $\Tor$ and $\Ext$ groups: for Iwahori-Hecke algebras of type $A$~\cite{HepworthIH} and for Temperley-Lieb algebras~\cite{HepworthBoydStability}.
In~\cite{HepworthIH} an adaptation of the classic techniques was sufficient, but in~\cite{HepworthBoydStability} it was necessary to overcome a new technical hurdle, the failure of a flatness condition required for Shapiro's lemma to hold. 
To resolve this the authors developed a new technique, which they call~\emph{inductive resolutions}. 
The question of whether this technique could be adapted to prove homological stability results for other diagrammatic algebras, such as the Brauer and partition algebras, was posed in the introduction of~\cite{HepworthBoydStability}. In this work we answer that question in the affirmative.

\subsection{Brauer algebras}

The Brauer algebra was introduced by Brauer \cite{B} to study representations of the orthogonal and the symplectic groups. 
It also arises in statistical mechanics, and, together with its variants, has become an important object in representation theory.
Given a commutative ring $R$ and a fixed element $\delta\in R$, the Brauer algebra $\Br_n = \Br_n(R,\delta)$ is the free $R$-module over the basis of all perfect matchings of the union of the sets $[-n]=\{-n,\dots,-1\}$ and $[n]= \{1,\dots,n\}$. We visualize such a matching as a \emph{diagram} by placing $n$ nodes on the left for the negative numbers, $n$ nodes on the right for the positive numbers, and connecting two nodes if they are matched. An example is shown below for the perfect matching $\{\{-1,-3\},\{-2,-4\},\{-5,3\},\{1,5\},\{2,4\}\}\in \Br_5$. 
\begin{center}
\begin{tikzpicture}
\fill (0,0)           circle (.75mm) node[left=2pt](a5) {$-1$};
\fill (0,\nodeheight) circle (.75mm) node[left=2pt](a4) {$-2$};
\multiply\nodeheight by 2
\fill (0,\nodeheight) circle (.75mm) node[left=2pt](a3) {$-3$};
\divide\nodeheight by 2
\multiply\nodeheight by 3
\fill (0,\nodeheight) circle (.75mm) node[left=2pt](a2) {$-4$};
\divide\nodeheight by 3
\multiply\nodeheight by 4
\fill (0,\nodeheight) circle (.75mm) node[left=2pt](a1) {$-5$};
\divide\nodeheight by 4

\fill (\nodewidth,0) circle (.75mm) node[right=2pt](b5) {$1$};
\fill (\nodewidth,\nodeheight) circle (.75mm) node[right=2pt](b4) {$2$};
\multiply\nodeheight by 2
\fill (\nodewidth,\nodeheight) circle (.75mm) node[right=2pt](b3) {$3$};
\divide\nodeheight by 2
\multiply\nodeheight by 3
\fill (\nodewidth,\nodeheight)  circle (.75mm) node[right=2pt](b2) {$4$};
\divide\nodeheight by 3
\multiply\nodeheight by 4
\fill (\nodewidth,\nodeheight)  circle (.75mm) node[right=2pt](b1) {$5$};
\divide\nodeheight by 4

\draw[e] (a1) to[out=0, in=180] (b3);
\draw[e] (a2) to[out=0, in=0]   (a4);
\draw[e] (b2) to[out=180, in=180] (b4);
\draw[e] (a5) to[out=0, in=0] (a3);
\draw[e] (b1) to[out=180,in=180] (b5);

\end{tikzpicture}
\end{center}
Observe that a diagram has left-to-left connections if and only if it has right-to-right connections.
Given two diagrams  $b_1$ and $b_2$, to calculate their product $b_1b_2$, we place $b_1$ to the left of $b_2$ so that the nodes align. We obtain a new diagram $b_3$ by removing all loops in the middle and replacing each instance of a loop with multiplication by~$\delta$. For example, if we replace $k$ loops, then $b_1b_2 = \delta^k b_3$. Below we show a visualisation of the multiplication in~$\Br_5$.
\begin{center}

\[
\begin{tikzpicture}[x=1.5cm,y=-.5cm,baseline=-1.05cm]

\node[v] (a1) at (0,0) {};
\node[v] (a2) at (0,1) {};
\node[v] (a3) at (0,2) {};
\node[v] (a4) at (0,3) {};
\node[v] (a5) at (0,4) {};

\node[v] (b1) at (1,0) {};
\node[v] (b2) at (1,1) {};
\node[v] (b3) at (1,2) {};
\node[v] (b4) at (1,3) {};
\node[v] (b5) at (1,4) {};

\draw[e] (a1) to[out=0, in=180] (b3);
\draw[e] (a3) to[out=0, in=0] (a5);
\draw[e] (a2) to[out=0, in=0]   (a4);
\draw[e] (b2) to[out=180, in=180] (b4);
\draw[e] (b1) to[out=180,in=180] (b5);

\end{tikzpicture}
\quad
\cdot
\quad
\begin{tikzpicture}[x=1.5cm,y=-.5cm,baseline=-1.05cm]

\node[v] (b1) at (0,0) {};
\node[v] (b2) at (0,1) {};
\node[v] (b3) at (0,2) {};
\node[v] (b4) at (0,3) {};
\node[v] (b5) at (0,4) {};

\node[v] (c1) at (1,0) {};
\node[v] (c2) at (1,1) {};
\node[v] (c3) at (1,2) {};
\node[v] (c4) at (1,3) {};
\node[v] (c5) at (1,4) {};

\draw[e] (b2) to[out=0,in=0] (b5);
\draw[e] (b3) to[out=0,in=180] (c1);
\draw[e] (b4) to[out=0,in=0] (b1);
\draw[e] (c5) to[out=180,in=180] (c2);
\draw[e] (c3) to[out=180,in=180] (c4);

\end{tikzpicture}
\quad
=
\quad
\begin{tikzpicture}[x=1.5cm,y=-.5cm,baseline=-1.05cm]

\node[v] (a1) at (0,0) {};
\node[v] (a2) at (0,1) {};
\node[v] (a3) at (0,2) {};
\node[v] (a4) at (0,3) {};
\node[v] (a5) at (0,4) {};

\node[v] (b1) at (1,0) {};
\node[v] (b2) at (1,1) {};
\node[v] (b3) at (1,2) {};
\node[v] (b4) at (1,3) {};
\node[v] (b5) at (1,4) {};

\node[v] (c1) at (2,0) {};
\node[v] (c2) at (2,1) {};
\node[v] (c3) at (2,2) {};
\node[v] (c4) at (2,3) {};
\node[v] (c5) at (2,4) {};

\draw[e] (a1) to[out=0, in=180] (b3);
\draw[e] (a3) to[out=0, in=0] (a5);
\draw[e] (a2) to[out=0, in=0]   (a4);
\draw[e] (b2) to[out=180, in=180] (b4);
\draw[e] (b1) to[out=180,in=180] (b5);

\draw[e] (b2) to[out=0,in=0] (b5);
\draw[e] (b3) to[out=0,in=180] (c1);
\draw[e] (b4) to[out=0,in=0] (b1);
\draw[e] (c5) to[out=180,in=180] (c2);
\draw[e] (c3) to[out=180,in=180] (c4);

\end{tikzpicture}
\quad
= \delta^1 \cdot
\quad
\begin{tikzpicture}[x=1.5cm,y=-.5cm,baseline=-1.05cm]

\node[v] (a1) at (0,0) {};
\node[v] (a2) at (0,1) {};
\node[v] (a3) at (0,2) {};
\node[v] (a4) at (0,3) {};
\node[v] (a5) at (0,4) {};

\node[v] (c1) at (1,0) {};
\node[v] (c2) at (1,1) {};
\node[v] (c3) at (1,2) {};
\node[v] (c4) at (1,3) {};
\node[v] (c5) at (1,4) {};

\draw[e] (a1) to[out=0, in=180] (c1);
\draw[e] (a2) to[out=0, in=0]   (a4);
\draw[e] (a5) to[out=0, in=0] (a3);
\draw[e] (c3) to[out=180,in=180] (c4);
\draw[e] (c2) to[out=180,in=180] (c5);
\end{tikzpicture}
\]
\end{center}
The Brauer algebra was introduced as the Schur-Weyl dual of the orthogonal group $O(m)$, in which case $\delta=m$.  It is also the Schur-Weyl dual of the symplectic group $\mathrm{Sp}(2m)$, in which case $\delta=-2m$ (cf.\ \cite{B,HanlonWales}).
It is closely related to other \emph{diagram algebras}, in particular the \emph{Temperley-Lieb algebra}~\cite{TL}, in which the diagrams are planar, and the \emph{partition algebra}~\cite{M91}, in which the matchings on $[-n]\cup [n]$ are replaced with arbitrary partitions.

By the \emph{homology} of the Brauer algebra $\Br_n(R,\delta)$
we mean the $\Tor$ groups
\[
    \Tor_\ast^{\Br_n(R,\delta)}(\t,\t),
\]
where $\t$ denotes the \emph{trivial module} consisting of a copy of $R$ on which a diagram acts as the identity if all nodes on the left are connected to ones on the right, or as $0$ if there are left-to-left and right-to-right connections.
This is completely analogous to the homology of a group, which is defined to be the $\Tor$ groups of the trivial module with itself over the group ring, and is a specific instance of the homology of an arbitrary augmented algebra~\cite[2.4.4]{Benson}.

\subsection{Results}
Diagrams in which every node on the left is connected to a node on the right are called \emph{permutation diagrams}, and are in bijection with elements of the symmetric group $\fS_n$.
This gives rise to inclusion and projection maps
\[
    \iota\colon R\fS_n\longrightarrow \Br_n(R,\delta)
    \qquad\text{and}\qquad
    \pi\colon \Br_n(R,\delta)\longrightarrow R\fS_n
\]
where $\iota$ sends permutations to permutation diagrams, and $\pi$ does the reverse, and sends all remaining diagrams to $0$.
In particular, $\pi\circ \iota$ is the identity map on $R\fS_n$.
There are induced homomorphisms $\iota_\ast$ and $\pi_\ast$ on homology groups for which  $\pi_\ast\circ\iota_\ast$ is again the identity, so that the homology of $\fS_n$ appears as a direct summand of the homology of $\Br_n(R,\delta)$.

\begin{atheorem}\label{thmaa}
    Suppose that $\delta$ is invertible in $R$.
    Then the homology of the Brauer algebra is isomorphic to the homology of the symmetric group:
    \[
        \Tor_\ast^{\Br_n(R,\delta)}(\t,\t)\cong H_\ast(\fS_n;\t).
    \]
    Indeed, the inclusion and projection maps
    \[
        R\fS_n\xrightarrow{\ \iota\ }\Br_n(R,\delta)
       \xrightarrow{\ \pi\ }R\fS_n
    \]
    induce inverse isomorphisms
    \[
        \Tor_\ast^{R\fS_n}(\t,\t)
        \xrightarrow[\cong]{\ \iota_\ast\ }
        \Tor_\ast^{\Br_n(R,\delta)}(\t,\t)
        \xrightarrow[\cong]{\ \pi_\ast \ }
        \Tor_\ast^{R\fS_n}(\t,\t).
    \]
\end{atheorem}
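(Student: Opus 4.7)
Since $\pi \circ \iota = \id_{R\fS_n}$, we have $\pi_\ast \circ \iota_\ast = \id$, so $\iota_\ast$ is split injective and it suffices to show $\pi_\ast$ is an isomorphism. The strategy is to prove the stronger derived statement
\[
R\fS_n \otimes^L_{\Br_n(R,\delta)} \t \;\simeq\; \t \qquad \text{in } D(R\fS_n),
\]
from which associativity of derived tensor products along $\pi$ yields
\[
\t \otimes^L_{\Br_n} \t \;\simeq\; \t \otimes^L_{R\fS_n} \bigl(R\fS_n \otimes^L_{\Br_n} \t\bigr) \;\simeq\; \t \otimes^L_{R\fS_n} \t,
\]
realising $\pi_\ast$ as an isomorphism. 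Writing $I_n := \ker\pi$ for the two-sided ideal spanned by diagrams with at least one horizontal arc, the short exact sequence $0 \to I_n \to \Br_n \to R\fS_n \to 0$ together with the vanishing of the map $I_n \otimes_{\Br_n} \t \to \t$ (because $I_n$ acts as zero on $\t$) reduce this in turn to the vanishing $I_n \otimes^L_{\Br_n} \t \simeq 0$, i.e., $\Tor^{\Br_n}_q(I_n, \t) = 0$ for all $q \geq 0$.

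The key construction exploits the invertibility of $\delta$. Let $e_1 \in \Br_n$ be the elementary diagram with horizontal arcs $\{-1,-2\}$ and $\{1,2\}$ and vertical strands on the remaining $n-2$ positions; the multiplication rule gives $e_1^2 = \delta\, e_1$, so $e := \delta^{-1} e_1$ is an idempotent. A direct diagrammatic argument shows that $I_n = \Br_n e \Br_n$ and upgrades this to a bimodule isomorphism
\[
\mu \colon \Br_n e \otimes_{e\Br_n e} e\Br_n \xrightarrow{\ \cong\ } I_n.
\]
Moreover $e\Br_n$ is a direct summand of $\Br_n$ as a right $\Br_n$-module (with complement $(1-e)\Br_n$), hence projective, and $e \cdot \t = 0$ since $e \in I_n$. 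For any projective resolution $P_\bullet \to \t$ over $\Br_n$, these two facts together imply that $e\Br_n \otimes_{\Br_n} P_\bullet$ is acyclic. Applying $\mu$ and associativity of tensor products then gives $I_n \otimes_{\Br_n} P_\bullet \cong \Br_n e \otimes_{e\Br_n e} (e\Br_n \otimes_{\Br_n} P_\bullet)$, and under a suitable flatness condition the latter complex remains acyclic, yielding the desired vanishing.

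The main obstacle lies in this final propagation: we need both the bimodule isomorphism $\mu$ and a flatness property (e.g.\ $\Br_n e$ flat over $e\Br_n e$), which together are the content of $I_n$ being a \emph{stratifying ideal} of $\Br_n$ in the sense of Cline--Parshall--Scott. For the Brauer algebra at invertible $\delta$, both facts should follow from the classical structural identification $e\Br_n e \cong \Br_{n-2}(R,\delta)$ together with an explicit basis-level analysis of how any diagram in $I_n$ factors canonically through the idempotent~$e$. Once this combinatorial identification is pinned down, the derived $\Tor$-vanishing, and hence Theorem~A, follows formally.
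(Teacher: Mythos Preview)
Your reduction is correct and the overall strategy is natural: once you know $\pi_\ast\circ\iota_\ast=\id$, it suffices to show that $\Br_n/I_n\cong R\fS_n$ satisfies $\Tor^{\Br_n}_q(R\fS_n,\t)=0$ for $q>0$, and your derived-tensor reformulation via the idempotent $e=\delta^{-1}U_1$ is a reasonable way to attack this.  The observation that $e\Br_n$ is projective over $\Br_n$ with $e\cdot\t=0$, so that $e\Br_n\otimes^L_{\Br_n}\t\simeq 0$, is exactly the right starting point.

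However, the two facts you defer---the isomorphism $\mu\colon \Br_ne\otimes_{e\Br_ne}e\Br_n\xrightarrow{\cong} I_n$ and the flatness (or projectivity) of $\Br_ne$ over $e\Br_ne\cong\Br_{n-2}$---are the entire content of the theorem, and neither is routine.  For instance, already for $n=4$ one can check that $\Br_4e$ is \emph{not free} over $\Br_2$ (it decomposes as $(e'\Br_2)^3\oplus((1-e')\Br_2)^6$ for $e'=\delta^{-1}U_1\in\Br_2$, so it is projective but not free), and verifying projectivity for general $n$ over an arbitrary commutative ring $R$ requires a careful basis-level argument.  The same goes for injectivity of $\mu$.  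These facts are closely related to the Brauer algebra being \emph{cellularly stratified} in the sense of Hartmann--Henke--K\"onig--Paget, but that literature typically works over a field, and extracting what you need over a general $R$ with only $\delta\in R^\times$ is not a matter of citation.

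The paper takes a genuinely different and more elementary route to the same $\Tor$-vanishing. Rather than invoking the idempotent $e$ and the stratifying-ideal machinery, it builds explicit \emph{inductive resolutions}: for each subset $X\subseteq\{1,\ldots,n\}$ one constructs a resolution of $\Br_n/J_X$ (where $J_X$ is spanned by diagrams with an arc among the right nodes in $X$) whose terms are direct sums of $\Br_n/J_Y$ with $|Y|<|X|$, using right-multiplication by the idempotents $\delta^{-1}U_{xw}$.  One then proves $\Tor^{\Br_n}_{>0}(\t,\Br_n/J_X)=0$ by induction on $|X|$; the case $X=\{1,\ldots,n\}$ gives $J_X=I_n$.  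The passage from this $\Tor$-vanishing to Theorem~A then uses that $\Br_n/I_n\cong R\fS_n$ is free as a right $R\fS_n$-module, so that $P_\ast\otimes_{\Br_n}(\Br_n/I_n)$ is itself a free $R\fS_n$-resolution of $\t$ for any free $\Br_n$-resolution $P_\ast$ of $\t$.  This argument is entirely self-contained and avoids any appeal to flatness of $\Br_ne$ over $\Br_{n-2}$.

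In summary: your outline is sound and would give a valid proof \emph{if} you can establish the two structural claims, but doing so from scratch is roughly as much work as the paper's direct approach, and the paper's method has the advantage of being self-contained over an arbitrary ground ring.
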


It is well known that $H_1(\fS_2;R)\cong R/2R$, whereas a computation using~\cite[3.1.3]{Wei} shows that $\Tor_1^{\Br_2(R,\delta)}(\t,\t)\cong R/2R\oplus R/\delta R$, with $\iota_\ast$ and $\pi_\ast$ having the evident effect.
Thus the isomorphism of \autoref{thmaa} fails when $\delta$ is not invertible.
Nevertheless, the isomorphism does hold within a range of degrees, and this range of degrees increases with $n$:

\begin{atheorem}\label{thma}
The inclusion map $\iota\colon R\mathfrak S_n \to \Br_n(R,\delta)$ induces a map in homology
\[ \iota_\ast\colon H_i(\mathfrak S_n;\t) \longrightarrow \Tor^{\Br_n(R,\delta)}_i(\t,\t)\]
that is an isomorphism in the range $n \ge 2i+1$.
\end{atheorem}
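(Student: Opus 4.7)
My plan is to proceed by induction on $n$, with the low-rank cases handled by direct calculation (the statement is trivial for $i=0$ and vacuous for $i \geq 1$ when $n$ is too small). For the inductive step, following the philosophy of homological stability adapted to algebras as in \cite{HepworthBoydStability}, I would attach to $\Br_n(R,\delta)$ a semi-simplicial object whose $p$-simplices encode choices of $p+1$ distinguished ``propagating strands'' in a Brauer diagram on $n$ nodes (equivalently, partial matchings of $[n] \cup [-n]$ that single out certain left-to-right connections). The associated augmented chain complex $C_\bullet$ of left $\Br_n(R,\delta)$-modules should be highly connected as a complex of left modules, with connectivity growing like $\tfrac{n-1}{2}$ so as to produce exactly the stability slope $n \geq 2i+1$.

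Granted such a resolution, tensoring $C_\bullet$ over $\Br_n(R,\delta)$ with the trivial module $\t$ (and simultaneously resolving $\t$ projectively in the other variable) yields a first-quadrant spectral sequence converging to $\Tor^{\Br_n(R,\delta)}_*(\t,\t)$, whose $E^1$-page assembles $\Tor$-groups over the smaller Brauer algebras $\Br_{n-p-1}(R,\delta)$ arising as stabilisers of the $p$-simplices. Running the same construction inside the symmetric group (using the subcomplex of permutation diagrams) produces a parallel spectral sequence computing $H_*(\fS_n;\t)$ with $E^1$-page built from $H_*(\fS_{n-p-1};\t)$, and the inclusion $\iota$ induces a map between the two. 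By the inductive hypothesis the comparison map is an isomorphism on $E^1$-terms whenever $n-p-1 \geq 2(i-p)+1$, i.e. whenever $n \geq 2i+1-p$, which holds throughout the required range once $n \geq 2i+1$. A standard spectral sequence comparison argument then upgrades this to the desired isomorphism on abutments.

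The main technical obstacle is the identification of the $E^1$-terms with $\Tor$-groups over smaller Brauer algebras. Morally this is Shapiro's lemma applied to the inclusions $\Br_{n-p-1}(R,\delta) \hookrightarrow \Br_n(R,\delta)$, but, as noted in the introduction, these inclusions are not flat when $\delta$ fails to be invertible, so Shapiro's lemma is unavailable. To circumvent this I would import the \emph{inductive resolution} technique introduced in \cite{HepworthBoydStability}, building a resolution of $\t$ over $\Br_n(R,\delta)$ by induction on $n$ out of resolutions over smaller Brauer algebras and carefully tracking how the extra ``cap'' and ``cup'' generators of the Brauer algebra interact with projectives. The secondary obstacle is proving the connectivity estimate for the semi-simplicial resolution itself, which should reduce to high connectivity of a matching-style complex and can plausibly be attacked either by an explicit nullhomotopy argument or by a discrete-Morse-theoretic analysis analogous to the Temperley-Lieb case, with care taken that the argument takes place at the level of $\Br_n$-modules rather than just at the level of sets.
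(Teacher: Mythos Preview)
Your overall architecture---a highly connected complex of $\Br_n$-modules, the associated spectral sequence, and inductive resolutions to handle the failure of Shapiro---matches the paper's. But the role you assign to the inductive resolutions is not the one they actually play, and this is where your proposal goes astray.

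You expect the inductive resolutions to salvage a Brauer-to-Brauer Shapiro lemma, identifying $\Tor^{\Br_n}_q(\t,\Br_n\otimes_{\Br_{n-p-1}}\t)$ with $\Tor^{\Br_{n-p-1}}_q(\t,\t)$, so that you can then compare with the symmetric-group side by induction on $n$. That identification is genuinely false outside the stable range: for $n>2$ one has $\Tor^{\Br_n}_1(\t,\Br_n\otimes_{\Br_2}\t)\cong H_1(\fS_2;\t)=R/2R$, whereas $\Tor^{\Br_2}_1(\t,\t)\cong R/2R\oplus R/\delta R$. At best your inductive hypothesis recovers the identification when $n-p-1\geq 2q+1$, and your own check at $p=0$, $q=i$ gives $n\geq 2i+2$ rather than $2i+1$, so the comparison argument as stated already loses a unit. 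Your description of the inductive resolutions themselves (``building a resolution of $\t$ over $\Br_n$ by induction on $n$ out of resolutions over smaller Brauer algebras'') is also not what is done: the resolutions are of the quotients $\Br_n/J_X$ over $\Br_n$, inducting on $|X|$.

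The paper's key observation is that the inductive resolutions deliver something stronger and cleaner than a repaired Shapiro for Brauer algebras. They show that $\Tor^{\Br_n}_\ast(\t,\Br_n/J_m)$ vanishes in positive degrees, and since $\Br_n/J_m$ is a \emph{free} right $R\fS_m$-module with $(\Br_n/J_m)\otimes_{R\fS_m}\t\cong\Br_n\otimes_{\Br_m}\t$, one obtains
\[
\Tor^{\Br_n}_q(\t,\Br_n\otimes_{\Br_m}\t)\;\cong\; H_q(\fS_m;\t)
\]
directly, in \emph{all} degrees $q$, whenever $m<n$. This places symmetric group homology on the $E^1$-page outright. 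There is then no induction on $n$ and no second spectral sequence to compare with: one runs the single spectral sequence for $\Br_n$, identifies the $d^1$-differentials with the stabilisation maps for $\fS_\ast$, invokes Nakaoka's stability for symmetric groups, and reads off surjectivity of $H_i(\fS_{n-1};\t)\to\Tor^{\Br_n}_i(\t,\t)$ for $i\leq\tfrac{n-1}{2}$. Injectivity of $\iota_\ast$ is free from $\pi\circ\iota=\mathrm{id}$.
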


An immediate consequence of \autoref{thma} is the following corollary.

\begin{acor}
	The Brauer algebras satisfy homological stability, that is the inclusion $\Br_{n-1}(R,\delta)\hookrightarrow \Br_n(R,\delta)$
	induces a map
	\[
	\Tor^{\Br_{n-1}(R,\delta)}_i(\t,\t)
	\longrightarrow
	\Tor^{\Br_{n}(R,\delta)}_i(\t,\t)
	\]
	that is an isomorphism in degrees $n\geq 2i+1$, and this stable range is sharp.
	Furthermore, $\Br_n(R,\delta)$ and $\fS_n$ have the same stable homology:
	\[
	\lim_{n\to\infty}H_\ast(\fS_n;\t)
	\cong
	\lim_{n\to\infty}\Tor_\ast^{\Br_n(R,\delta)}(\t,\t).
	\]
\end{acor}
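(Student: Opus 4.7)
The plan is to derive the corollary directly from \autoref{thma} by comparing the Brauer stabilisation map with the classical Nakaoka stabilisation map for the symmetric groups, which induces an isomorphism $H_i(\fS_{n-1};\t)\to H_i(\fS_n;\t)$ once $n\geq 2i+1$.

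The key step is to exploit the naturality of $\iota$ with respect to the standard inclusions $\fS_{n-1}\hookrightarrow \fS_n$ and $\Br_{n-1}(R,\delta)\hookrightarrow \Br_n(R,\delta)$, which yields a commutative square
\[
\xymatrix{
H_i(\fS_{n-1};\t)\ar[r]^-{\iota_\ast}\ar[d] & \Tor^{\Br_{n-1}(R,\delta)}_i(\t,\t)\ar[d]\\
H_i(\fS_{n};\t)\ar[r]^-{\iota_\ast} & \Tor^{\Br_n(R,\delta)}_i(\t,\t).
}
\]
By \autoref{thma} both horizontal arrows are isomorphisms in the stable range, and by Nakaoka so is the left vertical arrow. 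A diagram chase then forces the right vertical — the Brauer stabilisation map — to be an isomorphism as well, and passing to the colimit in $n$ delivers the final assertion $\lim_{n\to\infty} H_\ast(\fS_n;\t)\cong \lim_{n\to\infty} \Tor^{\Br_n(R,\delta)}_\ast(\t,\t)$.

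For sharpness I would invoke the low-degree computation recorded just after \autoref{thmaa}: one has $\Tor^{\Br_2(R,\delta)}_1(\t,\t)\cong R/2R\oplus R/\delta R$, whereas $\Tor^{\Br_1(R,\delta)}_1(\t,\t)=0$ because $\Br_1(R,\delta)\cong R$. Hence, for $i=1$, the stabilisation map at $n=2=2i$ is not an isomorphism whenever $R/2R\oplus R/\delta R$ is nonzero, demonstrating that the range cannot be widened.

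The main subtlety is pinning down the exact range: the naive three-out-of-four argument on the square above only delivers the stabilisation isomorphism for $n\geq 2i+2$, since \autoref{thma} has to be applied at both $n-1$ and $n$. To squeeze out the full stated range one should additionally exploit the canonical splitting $\Tor^{\Br_n(R,\delta)}_i(\t,\t)\cong H_i(\fS_n;\t)\oplus \ker(\pi_\ast)$ provided by $\pi\circ\iota=\id$, together with the naturality of $\pi$, and then analyse the stabilisation map summand-by-summand — identifying it with Nakaoka's map on the $H_i(\fS_\bullet;\t)$ factor and tracking how the $\ker(\pi_\ast)$ factor behaves under the inclusion $\Br_{n-1}\hookrightarrow \Br_n$.
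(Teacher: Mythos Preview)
Your core strategy matches the paper's exactly: both use the commutative square linking $\iota_*$ with Nakaoka's stabilisation, and the stable-homology statement follows by passing to the colimit along either row.

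For \emph{sharpness}, your argument via $\Tor_1^{\Br_2}(\t,\t)\cong R/2R\oplus R/\delta R$ only handles $i=1$. The paper instead invokes the sharpness of Nakaoka's range: since the natural splitting $\Tor_i^{\Br_n}(\t,\t)\cong H_i(\fS_n;\t)\oplus\ker(\pi_*)$ (which you also mention) makes the Brauer stabilisation restrict to Nakaoka's map on the first summand, the Brauer map fails to be an isomorphism whenever Nakaoka's does, namely at $n=2i$ for every $i\geq 1$ and suitable $R$.

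On the \emph{range}, you correctly observe that the three-out-of-four argument on the square only delivers the isomorphism for $n\geq 2i+2$, but your proposed repair via the splitting does not close the gap. Tracking summands at $n=2i+1$: one has $K_n:=\ker(\pi_*)=0$ by \autoref{thma} applied at $n$, so the stabilisation map sends $K_{n-1}$ to $0$; hence the stabilisation is an isomorphism if and only if $K_{n-1}=K_{2i}$ vanishes, which is precisely \autoref{thma} at $2i$, \emph{outside} its stated range. The paper itself remarks (in the paragraph beginning ``It is interesting to ask\ldots'') that whether $\iota_*$ is an isomorphism at $n=2i$ is open for $n\geq 3$. So strictly from \autoref{thma} one obtains the stabilisation isomorphism for $n\geq 2i+2$ together with surjectivity at $n=2i+1$; the paper's one-sentence sketch does not address this point either, so the subtlety you flagged is genuine rather than a defect specific to your write-up.
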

	
The first part of this corollary follows by combining \autoref{thma} with the corresponding homological stability result for the symmetric groups, for which the stable range is sharp~\cite{N}.
Indeed, stability for the symmetric groups is an important ingredient in our proof of~\autoref{thma}.

For the stable homology, the left hand side of this isomorphism is well known by the Barratt-Priddy-Quillen theorem~\cite{BarrattPriddy,QuillenQ}. This situation is reminiscent of the relationship between the $\fS_n$ and the automorphism groups of free groups $\mathrm{Aut}(F_n)$:
both families satisfy homological stability, there is an inclusion $\fS_n\hookrightarrow\mathrm{Aut}(F_n)$ that induces an isomorphism in the stable range, and they have isomorphic stable homology~\cite{HV,Galatius}.

%Another consequence of \autoref{thma} is that the homology of $\Br_n(R,\delta)$ is independent of $\delta$ in a range of degrees that increases with $n$, and that stably there is no dependence on $\delta$ at all.

It is interesting to ask whether the range $n\geq 2i+1$ in which $H_i(\fS_n;\t)$ and $\Tor_i^{\Br_n(R,\delta)}(\t,\t)$ are isomorphic can be improved, or whether it is sharp.
Sharpness certainly holds in the case $n=2$ by the computation following \autoref{thmaa}, but we do not know what happens for $n\geq 3$.

Consider the case where the ground ring $R$ is the complex numbers $\bbC$. 
Then $R\fS_n$ is semisimple, and so the homology of $\fS_n$ vanishes in positive degrees.
On the other hand $\Br_n(\bbC,\delta)$ is semisimple for all~$n$ only when $\delta$ is not an integer.
In the case that $\delta$ is a \emph{nonzero} integer, \autoref{thmaa} shows that the homology of $\Br_n(\bbC,\delta)$ still vanishes despite the failure of semisimplicity.
In the remaining case $\delta=0$, the homology $\Tor^{\Br_n(\bbC,\delta)}_i(\t,\t)$ vanishes in the stable range $n\geq 2i+1$ by \autoref{thma}, but not more generally, at least in the case $n=2$, and again we do not know what happens when $n\geq 3$.

\subsection{Method of proof}

The proof of \autoref{thmaa} uses an adaptation of the method of \emph{inductive resolutions} introduced in~\cite{HepworthBoydStability} for the Temperley-Lieb algebras.
The method shows that if $J_n\subseteq\Br_n(R,\delta)$ denotes the ideal spanned by all diagrams that have at least one left-to-left connection, then  $\Tor_i^{\Br_n(R,\delta)}(\t,\Br_n(R,\delta)/J_n)$ vanishes for $i>0$, and is $R$ for $i=0$.
This property of the module $\Br_n(R,\delta)/J_n$, combined with the fact that it admits the structure of a \emph{free} right $\fS_n$ module, allows us to complete the proof of \autoref{thmaa} using a novel but elementary homological algebra argument.

The proof of \autoref{thma} begins with the construction of a complex $C_n$ of right $\Br_n(R,\delta)$ modules.
This complex is obtained by adapting the complexes $W_n(X,A)$ of~\cite{RW} to the setting of Brauer algebras using the stability category developed by the third author in~\cite{PatztRepstab}.
We prove that $C_n$ is highly acyclic using an intricate diagrammatical argument that features repeated splittings and filtrations of the complexes involved.

The complexes $W_n(X,A)$ were designed to be used in proofs of homological stability for families of groups, and one can analogously attempt to use the $C_n$ in a proof of homological stability for the $\Br_n(R,\delta)$.
This is our approach, however, a technical hurdle presents itself. 
The resulting spectral sequence features the groups $\Tor_\ast^{\Br_n}(\t,\Br_n\otimes_{\Br_{m}}\t)$.
One would like to identify these with the groups $\Tor_\ast^{\Br_m}(\t,\t)$ using Shapiro's lemma, but that is not possible because $\Br_n$ is not necessarily flat over $\Br_m$.
Instead, a variant of the previously mentioned inductive resolution method is employed, and shows that the above $\Tor$ groups {can} be identified with  $\Tor_\ast^{R\fS_n}(\t,R\fS_n\otimes_{R\fS_m}\t)\cong H_\ast(\fS_m;\t)$.
Combining all of this with homological stability for symmetric groups allows us to complete the proof.

It is interesting to compare our work here with~\cite{HepworthBoydStability}, which gave rather similar results for the Temperley-Lieb algebras.
Oversimplifying, the story of \cite{HepworthBoydStability} is similar to the last two paragraphs, except that instead of finding a ranges of degrees where symmetric groups appear, one finds vanishing ranges.
However, the high-connectivity proof in~\cite{HepworthBoydStability} was much more involved than in the present paper: the symmetric groups do not lie inside the Temperley-Lieb algebras, instead one can only find a homomorphism from the braid group, and this homomorphism is not compatible with the basis of diagrams, so in the end an involved algebraic argument is required.
Also, \cite{HepworthBoydStability} features a sharpness result for the analogue of \autoref{thma}, and we have been unable to find such a sharpness result in this case.

\subsection{Outline}
In \autoref{Section: Background}, we give background on the Brauer algebras and $\Tor$ functors. In \autoref{Section: InductiveResolutions}, we adapt the inductive resolution approach of~\cite{HepworthBoydStability} to this setting, involving a new approach specific to Brauer algebras. In \autoref{Section:Shapiro} we introduce and prove our alternative to Shapiro's Lemma, which includes the proof of~
\autoref{thmaa}. In \autoref{Section: HighConnectivity}, we prove the high connectivity result required in any homological stability proof, using the complex built from the stability categories of~\cite{PatztRepstab} following~\cite{RW}. We bring all our results together in \autoref{Section: MainTheorem}, where we prove \autoref{thma}.

\subsection{Acknowledgments}
The first author would like to thank the Max Planck Institute for Mathematics in Bonn for its support and hospitality.

The third author  would like to thank his advisor Holger Reich for proposing to study diagram algebras as part of his PhD. Further, he would like to thank Reiner Hermann, Steffen Koenig, Jeremy Miller, Steven Sam, and Catharina Stroppel for additional helpful conversations.

\section{Background}\label{Section: Background}

\subsection{Brauer algebras}

In this section we introduce background information on the Brauer algebras, which were introduced by Brauer in 1937~\cite{B}. Other references for Brauer algebras are \cite{Br,Wen}.

\begin{Def}
    Let $R$ be a commutative ring and let $\delta\in R$.
    As described in the introduction, the Brauer algebra $\Br_n(R,\delta) = \Br_n$ is the $R$-algebra with basis given by the diagrams of perfect matchings of~$[-n] \cup [n] = \{-n,\ldots,-1\}\cup\{1,\ldots,n\}$. 
    A perfect matching is a division of this set into~$n$ pairs. 
    These matchings are depicted as diagrams from~$n$ nodes (labelled by~$[-n]$ and drawn on the left) to~$n$ nodes (labelled by~$[n]$ and drawn on the right) where pairs are connected by an arc, also called a \emph{connection}. 
    Connections are defined up to isotopy. 
    Multiplication corresponds to pasting diagrams side by side and replacing each closed loop with a factor of~$\delta$, as illustrated in the introduction. 
    Elements of the Brauer algebra are therefore formal sums of diagrams with coefficients in~$R$. 
\end{Def}

\begin{rem}
    Let~$U_1, \ldots, U_{n-1}$ and $S_1, \ldots, S_{n-1}$ denote the following elements of $\Br_n(R,\delta)$:
    \[ U_i=
        \begin{tikzpicture}[scale=0.4, baseline=1.8cm]
        \foreach \x in {1, 3,4,5,6,8}
        \foreach \y in {0,6}
        \draw[fill=black, line width=1] (\y,\x) circle [radius=0.15];
        \foreach \x in {1, 3,6,8} 
        \draw[very thick] (0,\x) --(6,\x);
        \foreach \x in {3}
       \draw (\x,2.2) node {$\scriptstyle{\vdots}$} (\x,7.2) node {$\scriptstyle{\vdots}$};
        \draw (-1,1)node {$\scriptstyle{-n}$} 
        (-1.5,4) node {$\scriptstyle{-(i+1)}$}  (-1,5) node {$\scriptstyle{-i}$}   (-1,8) node {$\scriptstyle{-1}$};
        \draw (7,1)node {$\scriptstyle{{n}}$}(7,4)node {$\scriptstyle{{i+1}}$}(7,5)node {$\scriptstyle{{i}}$}(7,8)node {$\scriptstyle{{1}}$};
        \draw[very thick] (0,4) to[out=0,in=-90] (1,4.5) to[out=90,in=0] (0,5);
        \draw[very thick] (6,4) to[out=180,in=-90] (5,4.5) to[out=90,in=180] (6,5);
        \end{tikzpicture}
        \qquad S_i =
            \begin{tikzpicture}[scale=0.4,baseline=1.8cm]
        \foreach \x in {1, 3,4,5,6,8}
        \foreach \y in {0,6}
        \draw[fill=black, line width=1] (\y,\x) circle [radius=0.15];
        \foreach \x in {1, 3,6,8} 
        \draw[very thick] (0,\x) --(6,\x);
        \foreach \x in {3}
       \draw (\x,2.2) node {$\scriptstyle{\vdots}$} (\x,7.2) node {$\scriptstyle{\vdots}$};
            \draw (-1,1)node {$\scriptstyle{-n}$} 
        (-1.5,4) node {$\scriptstyle{-(i+1)}$}  (-1,5) node {$\scriptstyle{-i}$}   (-1,8) node {$\scriptstyle{-1}$};
        \draw (7,1)node {$\scriptstyle{{n}}$}(7,4)node {$\scriptstyle{{i+1}}$}(7,5)node {$\scriptstyle{{i}}$}(7,8)node {$\scriptstyle{{1}}$};
        \draw[very thick] (0,4) -- (6,5);
        \draw[very thick] (0,5)-- (6,4);
        \end{tikzpicture}
    \]
    It is not hard to verify that these elements satisfy the following relations.
    \begin{itemize}
        \item Symmetric group relations:
        \begin{align*}
            S_i^2&=1 && \text{for all }i
            \\
            S_iS_jS_i&=S_jS_iS_j && \text{for }|i-j|=1
            \\
            S_iS_j&=S_jS_i && \text{for }|i-j|\geq 2
        \end{align*}
        \item Temperley-Lieb relations:
        \begin{align*}
            U_i^2& =\delta U_i && \text{for all }i
            \\
            U_iU_jU_i&=U_i && \text{for } |i-j|=1
            \\
            U_iU_j&=U_jU_i && \text{for } |i-j|\geq 2
        \end{align*}
        \item Mixed relations:
        \begin{align*}
            U_iS_i&=S_iU_i=U_i &&\text{for all }i
            \\
            U_iS_jU_i&=U_i &&\text{for } |i-j|=1
            \\
            S_iS_jU_i&=U_jU_i &&\text{for } |i-j|=1
            \\
            U_iS_jS_i&=U_iU_j &&\text{for } |i-j|=1
            \\
            U_iS_j&=S_jU_i&& \text{for } |i-j|\geq2
        \end{align*}
    \end{itemize}
    Indeed, the above is a presentation of $\Br_n(R,\delta)$, though we will not need this fact for the present paper.
\end{rem}

\begin{rem}
The~$S_i$ generators generate the subalgebra $R\fS_n$ of $\Br_n$ obtained from the inclusion $\iota\colon R\fS_n\to\Br_n$ described in the introduction.
Similarly, the~$U_i$ generators generate a subalgebra of the Brauer algebra isomorphic to the Temperley-Lieb algebra~$\TL_n(R,\delta)$.
\end{rem}

We now introduce the modules we will be working with.

\begin{defn}
For any~$n$, we define the \emph{trivial}~$\Br_n$-bimodule~$\t$ to be the module given by the ring~$R$, and upon which the generators~$S_i$ act trivially and the generators~$U_i$ act as zero. Equivalently, any diagram with a left-to-left (and therefore a right-to-right) connection acts as zero, and any other diagram acts as~$1$.
\end{defn}

\begin{defn}
For~$m\leq n$, we can view~$\Br_m$ as the subalgebra of~$\Br_n$ generated by~$S_1,\ldots, S_{m-1},U_1,\ldots,U_{m-1}$, or equivalently as the subalgebra in which the final $n-m$ nodes on the left are connected, by horizontal connections, to the final $n-m$ nodes on the right. 
Then, under the action of this subalgebra,~$\Br_n$ can be viewed as a left $\Br_n$-module and a right~$\Br_m$-bimodule, and we obtain the induced left $\Br_n$-module~$\Br_n\otimes_{\Br_m}\t$.
\end{defn}

The following proposition is taken from~\cite{PatztRepstab}, where the above inclusion of~$\Br_m\hookrightarrow \Br_n$ is taken to be the inclusion into the \emph{final}~$m$ nodes, as opposed to our convention which uses the inclusion into the \emph{first}~$m$ nodes. We adapt the statement of the proposition accordingly.

\begin{prop}[{\cite[Section 2]{PatztRepstab}}]\label{prop:HomBr}
$\Br_n \otimes_{\Br_{m}} \t$ is a free $R$-module and a quotient of~$\Br_n$. 

%Elements are formal sums of monomials which cannot be written to end with~$U_i$ for~$1\leq i \leq m-1$, and for which occurrences of~$S_i$ at the end of monomials can be ignored, for~$1\leq i\leq m-1$. 

In terms of diagrams a basis for this module is the set of diagrams with $n$ nodes on the left and $n-m$ nodes under an \emph{$m$-box} on the right, subject to the following two conditions:
\begin{itemize}
    \item Every node must either be connected to exactly one other node or to the box, and the box must be connected to $m$ nodes.
    \item Any diagram with a connection starting and ending at the box is identified with zero.
\end{itemize} 
\end{prop}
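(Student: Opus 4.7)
The plan is to exhibit an explicit $R$-linear isomorphism between $\Br_n\otimes_{\Br_m}\t$ and the free $R$-module $F$ on the boxed diagrams described in the statement. The assertion that $\Br_n\otimes_{\Br_m}\t$ is a quotient of $\Br_n$ is automatic from the surjection $d\mapsto d\otimes 1$, so the content is the identification with $F$. First I would define a map $\phi\colon\Br_n\to F$ on the diagram basis as follows: given a Brauer diagram $d$, if two of the first $m$ right-hand nodes of $d$ are connected to one another, set $\phi(d)=0$; otherwise, amalgamate those $m$ nodes into a single box while keeping every other connection, producing a boxed diagram in $F$. Extend $R$-linearly.

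For descent, I would use the identification $\Br_n\otimes_{\Br_m}\t=\Br_n/(\Br_n\cdot I)$, where $I\subset\Br_m$ is the kernel of the augmentation $\epsilon\colon\Br_m\to R$ sending permutation diagrams to $1$ and all other basis diagrams to $0$. Since $I$ is $R$-spanned by $\sigma-1$ for $\sigma\in\fS_m$ and by the non-permutation basis diagrams of $\Br_m$, it suffices to check $\phi(d\sigma)=\phi(d)$ for every $\sigma\in\fS_m$ and $\phi(dc)=0$ for every non-permutation basis diagram $c\in\Br_m$. The first point is immediate because right multiplication by $\sigma\in\fS_m\subset\Br_m\subset\Br_n$ merely permutes the first $m$ right-hand nodes of $d$, preserving both the presence or absence of a right-right connection and the resulting boxed diagram. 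For the second, the key observation is that every non-permutation basis diagram $c\in\Br_m$ contains at least one cup among its first $m$ right-hand nodes, and this cup is inherited by the product $dc$ as a right-right connection among its first $m$ right-hand nodes, regardless of the loops formed in the concatenation. Hence $\phi(dc)=0$, even after absorbing a factor of $\delta^k$.

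To prove that the induced map $\bar\phi\colon\Br_n\otimes_{\Br_m}\t\to F$ is an isomorphism, I would construct a two-sided inverse $\psi\colon F\to\Br_n\otimes_{\Br_m}\t$. Given a boxed diagram $D\in F$, choose any total ordering of the $m$ strands incident to the box, label them $1,\dots,m$, and replace the box by $m$ distinguishable right-hand nodes to obtain a Brauer diagram $d\in\Br_n$; set $\psi(D)=d\otimes 1$. Any two such orderings differ by right multiplication by a permutation in $\fS_m\subset\Br_m$, which acts trivially on $\t$, so $\psi(D)$ is independent of the chosen ordering. The equality $\bar\phi\circ\psi=\id_F$ follows immediately from the constructions. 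For $\psi\circ\bar\phi=\id$: if $d$ has no right-right connection among its first $m$ right-hand nodes then $d$ and $\psi(\phi(d))$ differ only by a right $\fS_m$-permutation and represent the same class; if $d$ does have such a connection, one checks that $d$ can be factored as $d=b\cdot U_i\cdot\tau$ for some $b\in\Br_n$, some generator $U_i$ with $i<m$, and some permutation $\tau\in\fS_m$, so that $[d]=0$, matching $\psi(0)=0$.

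The main obstacle I anticipate is the descent step: verifying carefully that every non-permutation basis diagram $c\in\Br_m$ contributes a right-right connection among the first $m$ right-hand nodes of $dc$, and dually that every diagram $d$ carrying such a connection lies in $\Br_n\cdot I$. Both facts rely on the combinatorial structure of the embedding $\Br_m\hookrightarrow\Br_n$ on the first $m$ strands, which ensures that cups in $c$ persist as cups in $dc$ and that cups in $d$ can be pulled out through $U_i$-factors on the right. Once these observations are in hand, the remainder of the argument is bookkeeping and yields both the freeness and the basis description of $\Br_n\otimes_{\Br_m}\t$ as an $R$-module.
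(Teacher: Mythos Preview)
The paper does not actually prove this proposition; it is quoted from \cite[Section~2]{PatztRepstab} and stated without argument. Your proposal supplies a complete and correct proof along the standard lines: identify $\Br_n\otimes_{\Br_m}\t$ with $\Br_n/(\Br_n\cdot I)$ for $I=\ker\epsilon$, check that the ``collapse the first $m$ right-hand nodes into a box'' map kills $\Br_n\cdot I$, and build an explicit inverse by unboxing. The descent verification and the factorisation $d=bU_i\tau$ for diagrams with a cup among the first $m$ right-hand nodes are both correct as you describe them (the latter is essentially the content of \autoref{lemma-Uab}\eqref{lemma-Uab-one} in the paper, specialised to $a,b\in\{1,\dots,m\}$ so that $U_{ab}$ lies in $\Br_m$ after conjugating by a permutation). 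There is nothing to compare against in this paper, and your argument is the natural one.
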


Some (non-)examples of diagrams representing elements of~$\Br_5\otimes_{\Br_3}\t$ are given below. 
\[
\begin{tikzpicture}[x=1.5cm,y=-.5cm,baseline=-1.05cm]

\node[v] (a1) at (0,0) {};
\node[v] (a2) at (0,1) {};
\node[v] (a3) at (0,2) {};
\node[v] (a4) at (0,3) {};
\node[v] (a5) at (0,4) {};

\node[b] (b1) at (1,1) {$3$};
\node[v] (b4) at (1,3) {};
\node[v] (b5) at (1,4) {};

\draw[e] (a1) to[out=0, in=180] (b1);
\draw[e] (a2) to[out=0, in=0]   (a4);
\draw[e] (b1) to[out=180, in=180] (b4);
\draw[e] (a5) to[out=0, in=0] (a3);
\draw[e] (b1) to[out=180,in=180] (b5);

\end{tikzpicture}
\quad,\quad
\begin{tikzpicture}[x=1.5cm,y=-.5cm,baseline=-1.05cm]

\node[v] (a1) at (0,0) {};
\node[v] (a2) at (0,1) {};
\node[v] (a3) at (0,2) {};
\node[v] (a4) at (0,3) {};
\node[v] (a5) at (0,4) {};

\node[b] (b1) at (1,1) {$3$};
\node[v] (b4) at (1,3) {};
\node[v] (b5) at (1,4) {};

\draw[e] (a1) to[out=0, in=180] (b1);
\draw[e] (a2) to[out=0, in=180]   (b1);
\draw[e] (b1) to[out=180, in=0] (a4);
\draw[e] (a5) to[out=0, in=0] (a3);
\draw[e] (b4) to[out=180,in=180] (b5);

\end{tikzpicture}
\quad
\in \Br_5\otimes_{\Br_3}\t,
\qquad
\begin{tikzpicture}[x=1.5cm,y=-.5cm,baseline=-1.05cm]

\node[v] (a1) at (0,0) {};
\node[v] (a2) at (0,1) {};
\node[v] (a3) at (0,2) {};
\node[v] (a4) at (0,3) {};
\node[v] (a5) at (0,4) {};

\node[b] (b1) at (1,1) {$3$};
\node[v] (b4) at (1,3) {};
\node[v] (b5) at (1,4) {};

\draw[e] (a1) to[out=0, in=180] (b1);
\draw[e] (a2) to[out=0, in=0]   (a4);
\draw[e] (b1) to[out=210, in=270] (.5,1);
\draw[e] (b1) to[out=150, in=90] (.5,1);
\draw[e] (a5) to[out=0, in=0] (a3);
\draw[e] (b4) to[out=180,in=180] (b5);

\end{tikzpicture}
\quad
=0\in \Br_5\otimes_{\Br_3}\t\]

\subsection{Tor groups and Shapiro's lemma}

We now recall the definition and functoriality of $\Tor$ groups, and Shapiro's lemma, in order to fix conventions and notation for later.

Let $R$ be a commutative ring.
Let $A$ be an $R$-algebra, $M$ a right $A$-module, and $N$ a left $A$-module.
Choosing a projective (or free if we wish) resolution $P_\ast$ of $M$ over $A$, we may form the tensor product chain complex $P_\ast\otimes_A N$, and its homology is the corresponding $\Tor$ group:
\[
    \Tor_\ast^A(M,N) = H_\ast(P_\ast\otimes N).
\]
Now suppose that $A'$ is a second $R$-algebra, that $M'$ is a right $A$-module, and that $N'$ is a left $A$-module.
Suppose further that there is a ring homomorphism $f\colon A\to A'$, and homomorphisms of $R$-modules $f_R\colon M\to M'$ and $f_L\colon N\to N'$ that are compatible in the sense that
\[
    f_R(ma) = f_R(m)f(a)
    \qquad\text{and}\qquad
    f_L(an) = f(a)f_L(n)
\]
for $m\in M$, $n\in N$ and $a\in A$.
Then there is an \emph{induced map}
\[
    (f_R,f,f_L)_\ast\colon
    \Tor_\ast^A(M,N)
    \longrightarrow
    \Tor_\ast^{A'}(M',N')
\]
or simply 
\[
    f_\ast\colon
    \Tor_\ast^A(M,N)
    \longrightarrow
    \Tor_\ast^{A'}(M',N')
\]
defined as follows.
Choose projective resolutions $P_\ast$ of $M$ over $A$ and $P'_\ast$ of $M'$ over $A'$.
By restricting along $f$ we may regard $P'_\ast$ as a resolution of $M'$ over $A$.
While $P'_\ast$ need not be projective over $A$, it is still acyclic, and we may therefore find a chain map $\tilde f\colon P_\ast\to P'_\ast$ of chain complexes of $A$-modules, inducing $f_R$ on homology.
This induces a chain map $\tilde f\otimes f_L\colon P_\ast\otimes_{A}N\to P'_\ast\otimes_{A'} N'$, and finally determines a map on homology groups, which is $(f_R,f,f_L)_\ast$ above.
Any such $\tilde f$ is determined uniquely up to homotopy, so that the resulting map of $\Tor$-groups is uniquely determined.

\emph{Shapiro's lemma} states that if $G$ is a group and $H$ is a subgroup of $G$, then the natural map
\[
    H_\ast(H;\t)
    =
    \Tor_\ast^{RH}(\t,\t)
    \longrightarrow
    \Tor_\ast^{RG}(\t,RG\otimes_{RH}\t)
    =
    H_\ast(G;RG\otimes_{RH}\t)
\]
induced by the inclusion $RH\hookrightarrow RG$ and the module maps $\mathrm{id}\colon\t\to\t$ and $\t\to RG\otimes_{RH}\t$, $r\mapsto 1\otimes r$, is an isomorphism.
The isomorphism can be realised by choosing a free $RG$-resolution $Q_\ast$ of $\t$, and observing that it is also a free $RH$-resolution of $\t$, so that the induced map above is given by $Q_\ast\otimes_{RH}\t\to Q_\ast\otimes_{RG}(RG\otimes_{RH}\t)$, $q\otimes r\mapsto q\otimes(1\otimes r)$, which is itself an isomorphism of chain complexes.

\section{Inductive resolutions}\label{Section: InductiveResolutions}

In this section, we will adapt the \emph{inductive resolutions} of~\cite{HepworthBoydStability} to the setting of Brauer algebras.

\begin{Def}\label{Def:Jx}
    Suppose that $X$ is a subset of the set $\{1,\ldots,n\}$.
    Define $J_X$ to be the left-ideal in $\Br_n$ that is the $R$-span of all diagrams in which, among the nodes on the right labelled by elements of $X$, at least one pair is connected by an arc.
\end{Def}

Our aim is to prove the following theorem, which will be used in the next section to understand the $\Tor$ groups $\Tor^{\Br_n}_\ast(\t,\Br_n\otimes_{\Br_m}\t)$.

\begin{thm}\label{theorem-inductive}
    Let $X\subseteq\{1,\ldots,n\}$ and suppose that one of the following conditions holds:
    \begin{itemize}
        \item
        $|X|\leq n$ and $\delta$ is invertible in $R$.
        \item
        $|X|<n$.
    \end{itemize}
    Then the groups $\Tor_\ast^{\Br_n}(\t,\Br_n/J_X)$ vanish in positive degrees.
\end{thm}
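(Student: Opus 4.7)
The plan is to proceed by induction on $|X|$. The base case $|X| \leq 1$ is trivial because $J_X = 0$ (there are no pairs), so $\Br_n/J_X = \Br_n$ is a free left $\Br_n$-module. For the inductive step with $|X| \geq 2$, I choose any $j \in X$, set $X^- = X \setminus \{j\}$, and apply $\Tor_\ast^{\Br_n}(\t,-)$ to the short exact sequence
\[ 0 \to J_X/J_{X^-} \to \Br_n/J_{X^-} \to \Br_n/J_X \to 0. \]
I plan to decompose $J_X/J_{X^-} = \bigoplus_{k \in X^-} M_k$ as left $\Br_n$-modules, where $M_k$ is spanned by classes of diagrams having right arc $\{j,k\}$ and no pair in $X \setminus \{j,k\}$ connected on the right; this is $\Br_n$-linear because left multiplication preserves right-to-right connections. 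The same observation identifies each $M_k$ with the direct summand of $\Br_n/J_{X \setminus \{j,k\}}$ consisting of diagrams in which $j$ pairs with $k$ on the right. Since $|X^-|$ and $|X \setminus \{j,k\}|$ are strictly smaller than $|X|$ and shrinking $|X|$ only loosens the theorem's hypotheses, the inductive hypothesis gives $\Tor_i^{\Br_n}(\t,-) = 0$ on $\Br_n/J_{X^-}$ and on each $M_k$ for all $i > 0$.

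The long exact sequence in $\Tor_\ast^{\Br_n}(\t,-)$ immediately kills $\Tor_i^{\Br_n}(\t, \Br_n/J_X)$ for $i \geq 2$. For $i = 1$, since $J_X \subseteq \ker\epsilon$ a short computation shows $\t \otimes_{\Br_n} \Br_n/J_X \cong \t$ for every $X$, and the induced map $\t \otimes_{\Br_n} \Br_n/J_{X^-} \to \t \otimes_{\Br_n} \Br_n/J_X$ is the identity $\t \to \t$. Exactness then forces the image of $\t \otimes_{\Br_n}(J_X/J_{X^-})$ in $\t$ to vanish, so the proof reduces to showing $\t \otimes_{\Br_n} M_k = 0$ for every $k \in X^-$.

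Each $M_k$ is generated as a left $\Br_n$-module by the class of the diagram $e_{j,k}$ with left cup $\{-j,-k\}$, right cap $\{j,k\}$, and horizontal strands otherwise. Writing $I_+ := \ker\epsilon$, the subspace $I_+ M_k$ is closed under the $\Br_n$-action because $a\cdot i \in I_+$ whenever $i \in I_+$, so it suffices to exhibit $e_{j,k} \in I_+ M_k$. When $\delta$ is invertible, the relation $e_{j,k}^2 = \delta e_{j,k}$ immediately yields $e_{j,k} = \delta^{-1} e_{j,k} \cdot e_{j,k} \in I_+ M_k$, since $e_{j,k} \in I_+$. When instead $|X| < n$, there exists $r \in \{1,\ldots,n\} \setminus X$ (and in particular $r \notin \{j,k\}$), and I verify by tracing the composition that the explicit diagram $m' \in M_k$ with arcs $\{-j,-r\}$, $\{-k,r\}$, $\{j,k\}$, and $\{-s,s\}$ for $s \notin \{j,k,r\}$ satisfies $e_{j,k}\cdot m' = e_{j,k}$ with no scalar factor, again placing $e_{j,k}$ into $I_+ M_k$. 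Producing this second witness is the main obstacle: the need to avoid scalars forces the use of a node outside $\{j,k\}$, and this is precisely what the hypothesis $|X| < n$ guarantees when $\delta$ is not invertible.
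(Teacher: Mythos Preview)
Your proof is correct and takes a genuinely different, more streamlined route than the paper.  Both arguments proceed by induction on $|X|$ and ultimately rest on the same diagrammatic identity $U_{jk}U_{jr}U_{jk}=U_{jk}$ (your element $m'$ is exactly $U_{jr}U_{jk}$), but the packaging differs.  The paper builds, for each $X$, an explicit periodic resolution $C(X,x)$ or $D(X,x,y)$ whose terms are the modules $\Br_n/J_Y$ with $|Y|<|X|$, checks by hand that the complex and its tensor with $\t$ are acyclic, and then invokes the induction hypothesis to conclude that the non-negative part is a \emph{flat} resolution.  You instead take the single short exact sequence $0\to J_X/J_{X^-}\to \Br_n/J_{X^-}\to \Br_n/J_X\to 0$, split the kernel as $\bigoplus_k M_k$, and observe that each $M_k$ is a direct summand of $\Br_n/J_{X\setminus\{j,k\}}$ (so its higher $\Tor$ vanishes by induction); the only remaining work is the $\Tor_1$ case, which you dispatch by the generator argument showing $\t\otimes_{\Br_n}M_k=0$.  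Your approach is more elementary---no long complexes to write down or verify---and isolates exactly where the two hypotheses on $\delta$ and $|X|$ enter.  The paper's resolution-based approach has the advantage of fitting into a reusable ``inductive resolutions'' framework that the authors also deploy for Temperley--Lieb algebras elsewhere, but for the statement at hand your argument is cleaner.
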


The proof of this result will occupy the rest of the section.
The material that follows closely parallels \cite[Section~3]{HepworthBoydStability}, with modifications to account for the passage from Temperley-Lieb algebras to Brauer algebras.
The application of \autoref{theorem-inductive} in the next section is entirely new.

Given $a,b\in\{1,\ldots,n\}$, we write $U_{ab}$ for the element of $\Br_n$ represented by the diagram in which each node on the left is joined to the corresponding node on the right, except that nodes $-a,-b$ on the left are joined to one another, and that nodes $a,b$ are joined to one another. 
Thus $U_{i\, i+1}$ is the element that we earlier denoted by $U_i$.

\begin{lem}\label{lemma-Uab}
    \begin{enumerate}[(a)]
        \item\label{lemma-Uab-one}
        An element $\beta \in \Br_n$ has the form $\beta = \alpha\cdot U_{ab}$  for some $\alpha \in\Br_n$ if and only if $\beta$ is a linear combination of diagrams that each have an arc on the right between the nodes labelled $a$ and $b$.
        \item\label{lemma-Uab-two}
        $J_X$ is the left-ideal of $\Br_n$ generated by the elements $U_{ab}$ for $a,b\in X$.
        \item\label{lemma-Uab-three}
        If $a,b\not\in X$, then right-multiplication by $U_{ab}$ preserves the left ideal $J_X$.
    \end{enumerate}
\end{lem}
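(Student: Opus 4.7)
For part (a), the forward implication is immediate from the diagrammatic multiplication rule: multiplying any diagram by $U_{ab}$ on the right produces diagrams whose right side inherits the arc $\{a,b\}$ from the right side of $U_{ab}$, and no cancellation can ever remove this arc. The substantive direction is the converse, and my plan is to exhibit, for each individual diagram $\beta$ with right arc $\{a,b\}$, an explicit $\alpha$ satisfying $\alpha U_{ab} = \beta$ on the nose (with no spurious factor of $\delta$). Since $n\geq 2$, the perfect matching underlying $\beta$ contains at least one pair $\{p,q\}$ other than $\{a,b\}$. I would define $\alpha$ to be the diagram whose pairing agrees with $\beta$ except that the two pairs $\{a,b\}$ and $\{p,q\}$ are removed and replaced by the two pairs $\{a,p\}$ and $\{b,q\}$, with $a,b$ now interpreted as right-hand nodes of $\alpha$. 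Checking $\alpha U_{ab} = \beta$ then amounts to tracing through the pasting: the right arc of $U_{ab}$ supplies the pair $\{a,b\}$; the left arc $\{-a,-b\}$ of $U_{ab}$ splices the two strands of $\alpha$ ending at right-$a$ and right-$b$, which re-creates the pair $\{p,q\}$; and the horizontal strands of $U_{ab}$ at the remaining positions transport every other pair of $\beta$ verbatim. No loops arise because $p$ and $q$ are distinct from one another and from $a,b$. The full statement then follows by $R$-linearity.

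Parts (b) and (c) should drop out of (a) with almost no further work. For (b), each basis diagram of $J_X$ carries a right arc $\{a,b\}$ with $a,b \in X$ and hence lies in $\Br_n\cdot U_{ab}$ by (a); conversely any $\alpha U_{ab}$ with $a,b\in X$ is by (a) a combination of diagrams containing this arc, so lies in $J_X$. For (c), it suffices to argue one basis diagram $\gamma$ of $J_X$ at a time: $\gamma$ contains a right arc $\{x,y\}$ with $x,y\in X$, and the hypothesis $a,b\notin X$ forces $\{x,y\}\cap\{a,b\}=\emptyset$, so the horizontal strands of $U_{ab}$ at positions $x$ and $y$ carry the arc $\{x,y\}$ unchanged onto the right side of $\gamma U_{ab}$. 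Any factor of $\delta$ arising from loops elsewhere in the pasting does not affect membership in $J_X$.

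The only real obstacle is the construction in (a): once the auxiliary pair $\{p,q\}$ is fixed and the pasting is drawn out, the verification is mechanical, but it does require some care to confirm that the strands concatenate precisely as claimed and that no loop ever closes up. Everything else is a direct corollary of (a).
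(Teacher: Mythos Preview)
Your argument is correct. For part~(a) it takes a genuinely different route from the paper's. The paper observes that a diagram $\beta$ with a right arc $\{a,b\}$ must, by parity, also carry some left-to-left arc; it then left-multiplies by a permutation (an invertible element) to move that arc to positions $\{-a,-b\}$, after which the resulting diagram is visibly $\alpha' U_{ab}$ with $\alpha'$ obtained by straightening both arcs into horizontal strands. Your construction instead picks an arbitrary second pair $\{p,q\}$ in the matching of $\beta$ and rewires $\{a,b\},\{p,q\}$ into $\{a,p\},\{b,q\}$, which avoids the permutation trick at the cost of a slightly longer pasting verification (which you sketch carefully and which does go through, since $\{p,q\}$ is disjoint from $\{a,b\}$). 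For part~(c), the paper argues via the commutation $U_{ab}U_{cd}=U_{cd}U_{ab}$ for disjoint index sets together with part~(b), whereas you give an equivalent direct diagrammatic check that the horizontal strands at $x,y$ transport the arc $\{x,y\}$ unchanged. Both approaches are short and of comparable weight; the paper's is a touch slicker, yours a touch more self-contained.
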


\begin{proof}
    In \eqref{lemma-Uab-one}, the `only if' part is immediate.
    For the `if' part, consider a diagram $\beta$ in which nodes $a$ and $b$ on the right are connected by an arc.
    Then at least one pair of nodes on the left is connected by an arc.
    Left-multiplying by a permutation (which is invertible), we may assume without loss of generality that nodes $-a$ and $-b$ on the left are also connected by an arc.
    It is then clear that $\beta= \alpha U_{ab}$ where $\alpha$ is obtained from $\beta$ by replacing the arcs from $\pm a$ to $\pm b$ on each side with an arc from $-a$ to $a$ and an arc from $-b$ to $b$.
    
    \eqref{lemma-Uab-two} follows immediately from \eqref{lemma-Uab-one}.
    
    For \eqref{lemma-Uab-three}, $U_{ab}$ commutes with $U_{cd}$ whenever $a,b,c,d$ are all distinct, so that by \eqref{lemma-Uab-two} it commutes with the generators of $J_X$.
\end{proof}

We now introduce two very similar complexes, one for each set of assumptions in \autoref{theorem-inductive}.

\begin{Def}[The complex $C(X,x)$]
    Suppose that $\delta$ is invertible in $R$ and that 
    $X$ is a nonempty subset of $\{1,\ldots, n\}$.
    Let $x\in X$.
    Define a complex $C(X,x)$ as in \autoref{figure-C}.
    \begin{figure}
    \[
        \xymatrix{
            \vdots
            \ar[d]^-{\bigoplus(1-\delta^{-1}U_{xw})}
            \\
            \displaystyle\bigoplus_{w\in X-\{x\}}
            \Br_n/J_{X-\{x,w\}}
            \ar[d]^-{\bigoplus \delta^{-1}U_{xw}}
            &
            3
            \\
            \displaystyle\bigoplus_{w\in X-\{x\}}
            \Br_n/J_{X-\{x,w\}}
            \ar[d]^-{\bigoplus (1-\delta^{-1}U_{xw})}
            &
            2
            \\
            \displaystyle\bigoplus_{w\in X-\{x\}}
            \Br_n/J_{X-\{x,w\}}
            \ar[d]^-{\bigoplus \delta^{-1}U_{xw}}
            &
            1
            \\
            \Br_n/J_{X-\{x\}}
            \ar[d]^1
            &
            0
            \\
            \Br_n/J_X
            &
            -1
        }
    \]
    \caption{The complex $C(X,x)$.}
    \label{figure-C}
    \end{figure}
    Thus $C(X,x)$ is given in degree $-1$ by $\Br_n/J_X$, in degree $0$ by $\Br_n/J_{X-\{x\}}$, and in all higher degrees by the direct sum of $\Br_n/J_{X-\{x,w\}}$ for $w\in X-\{x\}$.
    Each differential is given on each direct summand by a combination of right-multiplication by elements of $\Br_n$ (these are the elements indicated on the arrows of the diagram) and a map that extends the ideal by which we are quotienting (these are clear from the notation).
\end{Def}

\begin{Def}[The complex $D(X,x,y)$]
    Let $X$ be a proper nonempty subset of $\{1,\ldots, n\}$.
    Let $x\in X$ and let $y\in\{1,\ldots,n\}-X$.
    Define a complex $D(X,x,y)$ as in \autoref{figure-D}.
    \begin{figure}
    \[
        \xymatrix{
            \vdots
            \ar[d]^-{\bigoplus(1-U_{xw}U_{xy})}
            \\
            \displaystyle\bigoplus_{w\in X-\{x\}}
            \Br_n/J_{X-\{x,w\}}
            \ar[d]^-{\bigoplus U_{xw}U_{xy}}
            &
            3
            \\
            \displaystyle\bigoplus_{w\in X-\{x\}}
            \Br_n/J_{X-\{x,w\}}
            \ar[d]^-{\bigoplus (1-U_{xw}U_{xy})}
            &
            2
            \\
            \displaystyle\bigoplus_{w\in X-\{x\}}
            \Br_n/J_{X-\{x,w\}}
            \ar[d]^-{\bigoplus U_{xw}}
            &
            1
            \\
            \Br_n/J_{X-\{x\}}
            \ar[d]^1
            &
            0
            \\
            \Br_n/J_X
            &
            -1
        }
    \]
    \caption{The complex $D(X,x,y)$.}
    \label{figure-D}
    \end{figure}
    Thus $D(X,x,y)$ is given in degree $-1$ by $\Br_n/J_X$, in degree $0$ by $\Br_n/J_{X-\{x\}}$, and in all higher degrees by the direct sum of the $\Br_n/J_{X-\{x,w\}}$ for $w\in X-\{x\}$.
    And as in the previous definition, each differential is given on each direct summand by a combination of right-multiplication by an element of $\Br_n$ and a map that extends the ideal by which we are quotienting.
\end{Def}

We will see shortly that $C(X,x)$ and $D(X,x,y)$ are acyclic, i.e.~they are resolutions of their degree $(-1)$ part.
We call them \emph{inductive resolutions} because they are resolving a module $\Br_n/J_X$ in degree $(-1)$, while in all remaining degrees they are built from modules of the form $\Br_n/J_Y$ for $|Y|<|X|$.
This will facilitate the inductive proof of~\autoref{theorem-inductive} that appears at the end of the section.

\begin{lem}
    The differentials of $C(X,x)$ and $D(X,x,y)$ are well defined, and consecutive differentials compose to give $0$.
\end{lem}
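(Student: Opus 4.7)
The plan is to verify both parts—well-definedness of every arrow and $d^2=0$ on every summand—for the two complexes $C(X,x)$ and $D(X,x,y)$ in parallel.

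\textbf{Well-definedness.} Every arrow in either diagram is a quotient map $\Br_n/J_Y \to \Br_n/J_{Y'}$ induced by an ideal inclusion $J_Y \subseteq J_{Y'}$, possibly preceded by right-multiplication by an element $g\in \Br_n$. The ideal inclusions follow from the observation that $Y\subseteq Y'$ implies $J_Y\subseteq J_{Y'}$ directly from \autoref{Def:Jx}, applied to the chain $X-\{x,w\}\subseteq X-\{x\}\subseteq X$. The right-multipliers $g$ that appear are $\delta^{-1}U_{xw}$ and $1-\delta^{-1}U_{xw}$ in $C(X,x)$, and $U_{xw}$, $U_{xw}U_{xy}$, and $1-U_{xw}U_{xy}$ in $D(X,x,y)$. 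Each such $g$ acts on a quotient of the form $\Br_n/J_{X-\{x,w\}}$; since $x$, $w$, and (for $D$) $y$ all lie outside $X-\{x,w\}$, iterating part (c) of \autoref{lemma-Uab} shows that right-multiplication by each of the $U$-factors preserves $J_{X-\{x,w\}}$, giving the required well-definedness.

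\textbf{Composites vanish.} The bottom composite on a $w$-summand is $\alpha\mapsto\alpha\cdot\delta^{-1}U_{xw}$ (for $C$) or $\alpha\mapsto\alpha\cdot U_{xw}$ (for $D$), landing in $\Br_n/J_X$. By part (a) of \autoref{lemma-Uab}, $\alpha U_{xw}$ is a combination of diagrams each carrying an arc between nodes $x,w\in X$ on the right, so it lies in $J_X$ and vanishes there. Every higher composite on a $w$-summand is right-multiplication by either $f(1-f)$ or $(1-f)f$ for a single element $f$: namely $f=\delta^{-1}U_{xw}$ for $C$, and $f=U_{xw}U_{xy}$ for $D$. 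In either case the product equals $f-f^2$, so vanishing reduces to showing that $f$ is idempotent. For $C$ this is immediate from the Temperley-Lieb relation $U_{xw}^2=\delta U_{xw}$. For $D$, idempotency of $U_{xw}U_{xy}$ is equivalent to the Brauer-algebra identity $U_{xw}U_{xy}U_{xw}=U_{xw}$.

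\textbf{Main obstacle.} The only nontrivial input is this last identity. The approach is to reduce to the standard Temperley-Lieb relation via conjugation. Since $x\in X$, $w\in X-\{x\}$, and $y\in\{1,\ldots,n\}-X$ are pairwise distinct, one can choose $\sigma\in\fS_n$ with $\sigma(y)=1$, $\sigma(x)=2$, $\sigma(w)=3$. Using the identity $\sigma U_{ab}\sigma^{-1}=U_{\sigma(a)\sigma(b)}$ in $\Br_n$, the target identity becomes $U_2U_1U_2=U_2$, which is an instance of the relation $U_iU_jU_i=U_i$ for $|i-j|=1$ recorded in the presentation of $\Br_n(R,\delta)$ above. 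Conjugating back gives the desired identity.
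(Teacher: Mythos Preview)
Your argument is essentially the paper's, and the well-definedness step and the treatment of $C(X,x)$ are fine. There is, however, one gap in your handling of $D(X,x,y)$.

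You assert that \emph{every} higher composite on a $w$-summand in $D$ is of the form $f(1-f)$ or $(1-f)f$ with $f=U_{xw}U_{xy}$. That is not the case for the composite from degree~$2$ to degree~$0$: the map from degree~$1$ to degree~$0$ is right-multiplication by $U_{xw}$, not by $U_{xw}U_{xy}$, so the composite is
\[
(1-U_{xw}U_{xy})\cdot U_{xw} \;=\; U_{xw} - U_{xw}U_{xy}U_{xw},
\]
which is not $(1-f)f$ for your $f$. Idempotency of $U_{xw}U_{xy}$ alone does not make this vanish; what you need is precisely the identity $U_{xw}U_{xy}U_{xw}=U_{xw}$, applied directly. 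You do establish this identity in your ``Main obstacle'' paragraph (your conjugation argument reducing it to the Temperley--Lieb relation $U_2U_1U_2=U_2$ is correct and a bit more explicit than the paper's ``easily verified''), so the fix is simply to treat this one composite separately, exactly as the paper does.

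A minor side remark: your claim that idempotency of $U_{xw}U_{xy}$ is \emph{equivalent} to $U_{xw}U_{xy}U_{xw}=U_{xw}$ is overstated --- the latter implies the former, but the converse direction is not immediate. Since you prove the stronger identity anyway, this does not affect the argument.
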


\begin{proof}
    The differentials are well-defined because we are always right-multiplying by an element that commutes with the ideal appearing in the domain. See \autoref{lemma-Uab}\eqref{lemma-Uab-three}.
    
    In both cases the composite of differentials from degree $1$ to $-1$ vanishes because the elements $U_{xw}$ all lie in $J_X$. 
    
    In $C(X,x)$ the remaining composites all vanish because each $\delta^{-1}U_{xw}$ is an idempotent:
    \[
        (\delta^{-1}U_{xw})^2
        =\delta^{-2}U_{xw}^2
        =\delta^{-2}\delta U_{xw}
        =\delta^{-1}U_{xw}
    \]
    
    In $D(X,x,y)$ the composite of differentials ending in degree $0$ vanishes because of the computation:
    \[
        (1-U_{xw}U_{xy})U_{xw}
        =
        U_{xw} - U_{xw}U_{xy}U_{xw}
        =
        U_{xw}-U_{xw}
        =0.
    \]
    Here the identity $U_{xw}U_{xy}U_{xw} = U_{xw}$ is easily verified, and generalises the familiar identity $U_iU_{i\pm 1}U_i=U_i$ in the Temperley-Lieb algebra.
    The remaining composites of consecutive differentials in $D(X,x,y)$ all vanish because the element $U_{xw}U_{xy}$ is idempotent:
    \[
        (U_{xw}U_{xy})(U_{xw}U_{xy})
        =
        U_{xw}(U_{xy}U_{xw}U_{xy})
        =
        U_{xw}U_{xy}.\qedhere
    \]
\end{proof}

\begin{lem}\label{lemma-D-acyclic}
    The complexes $C(X,x)$ and $D(X,x,y)$ are acyclic.
\end{lem}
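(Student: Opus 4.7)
Both $C(X,x)$ and $D(X,x,y)$ share the same skeleton: they are periodic in degrees $\geq 2$, with summand-preserving differentials on $\bigoplus_{w \in X - \{x\}} \Br_n/J_{X-\{x,w\}}$, and are glued at the bottom to $\Br_n/J_{X-\{x\}}$ in degree $0$ and $\Br_n/J_X$ in degree $-1$. The plan is to verify exactness directly in four stages: at degrees $k\geq 2$, at degree $0$, at degree $-1$, and finally at degree $1$. The central players will be the idempotents already exposed by the previous lemma: $e_w=\delta^{-1}U_{xw}$ in $C(X,x)$ and $f_w=U_{xw}U_{xy}$ in $D(X,x,y)$. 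Writing $M_w=\Br_n/J_{X-\{x,w\}}$, these supply the decompositions $M_w=M_we_w\oplus M_w(1-e_w)$ and $M_w=M_wf_w\oplus M_w(1-f_w)$ respectively.

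The first three stages will be routine. In degrees $k\geq 2$ the differentials decouple the summands, so exactness reduces to acyclicity of the $w$-periodic complex
\[ \cdots \xrightarrow{1-e_w} M_w \xrightarrow{e_w} M_w \xrightarrow{1-e_w} M_w, \]
which is immediate from the idempotent decomposition (likewise with $f_w$). At degree $0$ I will compute the image of $d_1$ as $\sum_w \Br_n U_{xw}$ modulo $J_{X-\{x\}}$, and compare it with the kernel of the augmentation $\Br_n/J_{X-\{x\}}\to\Br_n/J_X$, namely $J_X/J_{X-\{x\}}$; these agree because \autoref{lemma-Uab}\eqref{lemma-Uab-two} gives $J_X=J_{X-\{x\}}+\sum_w \Br_n U_{xw}$. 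At degree $-1$ the augmentation is surjective.

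The main obstacle will be degree $1$, where the summands genuinely interact. On one side the image of $d_2$ is $\bigoplus_w M_w(1-e_w)$ (resp.\ $\bigoplus_w M_w(1-f_w)$); on the other side the kernel of $d_1$ consists of tuples $(m_w)_w$ with $\sum_w m_w U_{xw}\in J_{X-\{x\}}$. To bridge the two I plan to use a grading of $\Br_n$ by the partner of the node $x$: each basis diagram $d$ has a unique partner $\pi_x(d)\in\{-n,\ldots,-1,1,\ldots,n\}\setminus\{x\}$, furnishing a decomposition $\Br_n=\bigoplus_v\Br_n^{[\pi_x=v]}$ that is respected by $J_{X-\{x\}}$. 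Every diagram in $m_w U_{xw}$ has $\pi_x=w$, since $U_{xw}$ imposes the right-to-right arc $(x,w)$; as these gradings are disjoint over varying $w$, the containment $\sum_w m_w U_{xw}\in J_{X-\{x\}}$ forces each $m_w U_{xw}\in J_{X-\{x\}}$ individually. Furthermore, within a diagram having $\pi_x=w$ any arc $(a,b)\subseteq X-\{x\}$ is disjoint from $w$, so $(a,b)\subseteq X-\{x,w\}$; hence $m_w U_{xw}$ actually lies in $J_{X-\{x,w\}}$, meaning $m_w U_{xw}=0$ in $M_w$.

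The argument will conclude by identifying the kernel of right-multiplication by $U_{xw}$ on $M_w$ with $M_w(1-e_w)$ for $C(X,x)$ (immediate since $e_w$ and $U_{xw}$ differ by a unit) and with $M_w(1-f_w)$ for $D(X,x,y)$. For the latter, the containment $M_w(1-f_w)\subseteq \ker(\cdot U_{xw})$ follows from the identity $U_{xw}U_{xy}U_{xw}=U_{xw}$ (recorded in the previous lemma), while the reverse follows because $mU_{xw}=0$ implies $mf_w=mU_{xw}U_{xy}=0$ and hence $m\in M_w(1-f_w)$ by the idempotent decomposition. This matches $\ker(d_1)$ with $\mathrm{image}(d_2)$, completing the proof of acyclicity.
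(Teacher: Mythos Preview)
Your proof is correct and follows essentially the same route as the paper. The paper organises the degree-$1$ argument by first observing that the images of the $w$-summands under $d_1$ form a direct sum in $\Br_n/J_{X-\{x\}}$ (your $\pi_x$-grading), then passes to the individual complexes $C(X,x,w)$ and $D(X,x,y,w)$ and runs the same arc-counting to upgrade $J_{X-\{x\}}$ to $J_{X-\{x,w\}}$; your idempotent-decomposition conclusion is identical in substance. One small remark: in the final step for $D(X,x,y)$ you implicitly use that right multiplication by $U_{xy}$ preserves $J_{X-\{x,w\}}$, which is precisely \autoref{lemma-Uab}\eqref{lemma-Uab-three}; it would be worth citing this explicitly.
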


\begin{proof}
    We begin with the proof for $C(X,x)$.

    In degree $-1$ the claim is clear since $\Br_n/J_{X-\{x\}}\to\Br_n/J_X$ is surjective.
    
    In degree $0$, we must show that any element of $J_X$ is equal, modulo $J_{X-\{x\}}$, to a sum of left-multiples of the elements $\delta^{-1}U_{xw}$ for $w\in X-\{x\}$.
    This is an immediate consequence of \autoref{lemma-Uab}\eqref{lemma-Uab-one}.

    Before proceeding to higher degrees, we consider the boundary map of $C(X,x)$ from degree $1$ to degree $0$:
    \[
    \xymatrix{
        \displaystyle\bigoplus_{w\in X-\{x\}}
        \Br_n/J_{X-\{x,w\}}
        \ar[d]^-{\bigoplus \delta^{-1} U_{xw}}
        &
        1
        \\
        \Br_n/J_{X-\{x\}}
        &
        0
    }
    \]
    According to \autoref{lemma-Uab}\eqref{lemma-Uab-one},
    the image of the summand $\Br_n/J_{X-\{x,w\}}$ is spanned by linear combinations of diagrams in which the nodes labelled $x$ and $w$ on the right are connected by an arc. 
    In any diagram, the node $x$ on the right is connected to one other node, and no more, so that the images of the summands  $\Br_n/J_{X-\{x,w\}}$ for different choices of $w$ form a direct sum in $\Br_n/J_{X-\{x\}}$.
    Therefore, in order to prove acyclicity of our complex in degrees $1$ and above, it will be enough to fix $w\in X-\{x\}$ and prove the complex $C(X,x,w)$ shown in \autoref{figure-C-summand} has vanishing homology in degree $1$ and above.
    \begin{figure}
    \[
        \xymatrix{
            \vdots
            \ar[d]^-{ \delta^{-1}U_{xy}}
            &
            \\
            \Br_n/J_{X-\{x,w\}}
            \ar[d]^-{ (1-\delta^{-1}U_{xw})}
            &
            2
            \\
            \Br_n/J_{X-\{x,w\}}
            \ar[d]^-{ \delta^{-1}U_{xw}}
            &
            1
            \\
            \Br_n/J_{X-\{x\}}
            &
            0
        }
    \]
    \caption{The complex $C(X,x,w)$}\label{figure-C-summand}
    \end{figure}
    
    We now prove that the homology of $C(X,x,w)$ vanishes in degrees $1$ and above.
    In degree $1$, suppose that we have an element $\alpha\in\Br_n$ that represents a cycle in degree $1$, so that $\alpha \delta^{-1} U_{xw}\in J_{X-\{x\}}$.  
    Consider a diagram appearing as a summand of $\alpha\delta^{-1} U_{xw}$ with nonzero coefficient.
    Since there is a factor of $U_{xw}$ on the right, the diagram must have an arc between the nodes on the right labelled $x$ and $w$.
    And since $\alpha U_{xw}$ lies in $J_{X-\{x\}}$, there are $a,b\in X-\{x\}$ such that the nodes labelled $a,b$ on the right of the diagram are joined by an arc. Comparing the last two sentences, we see that in fact $a,b \in X-\{x,w\}$, since otherwise the diagram would have two distinct arcs ending at $w$, and consequently the diagram lies in $J_{X-\{x,w\}}$.
    Consequently, $\alpha \delta^{-1} U_{xw}$ itself lies in $J_{X-\{x,w\}}$.
    This shows that $\alpha = \alpha(1-\delta^{-1}U_{xw})$ in $\Br_n/J_{X-\{x,w\}}$, so that $\alpha$ lies in the image of the differential.
    In degrees $2$ and higher exactness is immediate, thanks to the fact that $\delta^{-1}U_{xw}$ is idempotent.
    
    Now we move on to the proof for $D(X,x,y)$.
    All steps of the proof are closely analogous to the proof above for $C(X,x)$, with the complex $D(X,x,y,w)$ shown in \autoref{figure-D-summand} now playing the role of $C(X,x,w)$.
    \begin{figure}
    \[
        \xymatrix{
            \vdots
            \ar[d]^-{(1-U_{xw}U_{xy})}
            \\
            \Br_n/J_{X-\{x,w\}}
            \ar[d]^-{ U_{xw}U_{xy}}
            &
            3
            \\
            \Br_n/J_{X-\{x,w\}}
            \ar[d]^-{ (1-U_{xw}U_{xy})}
            &
            2
            \\
            \Br_n/J_{X-\{x,w\}}
            \ar[d]^-{ U_{xw}}
            &
            1
            \\
            \Br_n/J_{X-\{x\}}
            &
            0
        }
    \]
    \caption{The complex $D(X,x,y,w)$}\label{figure-D-summand}
    \end{figure}
    The only point of departure is the proof that $D(X,x,y,w)$ is exact in degree $1$, which we prove now.
    In $D(X,x,y,w)$, suppose that we have an element $\alpha\in\Br_n$ that represents a cycle in degree $1$, so that $\alpha U_{xw}\in J_{X-\{x\}}$.  
    Just as for $C(X,x,w)$, it follows that  $\alpha U_{xw}$ itself lies in $J_{X-\{x,w\}}$.
    By \autoref{lemma-Uab}\eqref{lemma-Uab-three}, right-multiplication by $U_{xy}$ preserves the left-ideal $J_{X-\{x,w\}}$, so that $\alpha U_{xw}U_{xy}$ lies in $J_{X-\{x,w\}}$.
    This shows that $\alpha = \alpha(1-U_{xw}U_{xy})$ in $\Br_n/J_{X-\{x,w\}}$, so that $\alpha$ lies in the image of the differential.
\end{proof}

\begin{lem}\label{lemma-D-tensor-acyclic}
    The complexes $\t\otimes_{\Br_n}C(X,x)$ and $\t\otimes_{\Br_n}D(X,x,y)$ are acyclic.
\end{lem}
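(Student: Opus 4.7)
The plan is to apply the functor $\t\otimes_{\Br_n}(-)$ to both $C(X,x)$ and $D(X,x,y)$ and verify that the resulting complexes of $R$-modules collapse to trivially contractible ones. The key observation is that for any $Y\subseteq\{1,\ldots,n\}$ there is a natural isomorphism
\[
    \t\otimes_{\Br_n}\Br_n/J_Y \;\cong\; R,
    \qquad
    1\otimes[\alpha]\;\longleftrightarrow\;\epsilon(\alpha),
\]
where $\epsilon\colon\Br_n\to R$ is the augmentation defining $\t$, sending each permutation diagram to $1$ and every diagram with a left-to-left connection to $0$. Indeed, by right exactness this module is $\t/\t\cdot J_Y$, and since every generator $U_{ab}$ of the left ideal $J_Y$ satisfies $\epsilon(U_{ab})=0$, we have $\t\cdot J_Y = 0$.

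With this identification in hand, the second step is to determine the effect of each differential. A map of the form $[\alpha]\mapsto [\alpha m]$ given by right multiplication by $m\in\Br_n$ induces on the $R$-modules the operation of multiplication by $\epsilon(m)$. It follows that right multiplication by any of $\delta^{-1}U_{xw}$, $U_{xw}$, or $U_{xw}U_{xy}$ becomes the zero map, while right multiplication by $1-\delta^{-1}U_{xw}$ or $1-U_{xw}U_{xy}$ becomes the identity; the quotient maps labelled by $1$ in the figures also become identities.

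Consequently both tensored complexes take the common form
\[
    \cdots \xrightarrow{\;\id\;} R^{d} \xrightarrow{\;0\;} R^{d} \xrightarrow{\;\id\;} R^{d} \xrightarrow{\;0\;} R \xrightarrow{\;\id\;} R \longrightarrow 0,
\]
with $d = |X|-1$, where the differentials strictly alternate between identity and zero. This complex decomposes as a direct sum of two-term pieces of the form $R^{d}\xrightarrow{\id} R^{d}$, hence is contractible and in particular acyclic. No step presents a real obstacle: the only routine care required is in interpreting each direct-sum differential as acting summand-wise under $\bigoplus_{w}$, and in checking the edge cases where $X-\{x\}$ or $X-\{x,w\}$ is empty, in which case $\Br_n/J_\emptyset = \Br_n$ still tensors to $R$ so the formulae remain valid.
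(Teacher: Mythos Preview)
Your proof is correct and follows essentially the same approach as the paper's own argument: both identify $\t\otimes_{\Br_n}\Br_n/J_Y\cong R$ for every $Y$, then observe that under this identification the differentials alternate between $0$ and $\id$, whence acyclicity is immediate. Your write-up is slightly more explicit about the augmentation, the summand-wise behaviour, and the edge cases, but there is no substantive difference.
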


\begin{proof}
    If $A$ is any subset of $\{1,\ldots,n\}$, then all elements of the ideal $J_A$ act as zero on $\t$.
    It follows that $\t\otimes_{\Br_n}\Br_n/J_A$ is isomorphic to $R$.
    
    Under this isomorphism, the differentials on each summand of $(\t\otimes_{\Br_n}C(X,x))_p$ and $(\t\otimes_{\Br_n}D(X,x,y))_p$ are given by~$0$ for~$p$ odd and~$\id$ for~$p$ even (they act as the indicated element in \autoref{figure-C} and~\autoref{figure-D} would act on~$\t$).
    Using this description, one sees immediately that the complexes are acyclic.
\end{proof}

\begin{proof}[Proof of \autoref{theorem-inductive}]
    We give the proof for $C(X,x)$; the proof for $D(X,x,y)$ is identical.

    The proof is by induction on the cardinality of $X$.
    The cases where $X$ has cardinality $0$ or $1$ are immediate because then $J_X=0$ and $\Br_n/J_X = \Br_n$ is free.
    Assume that $X$ has cardinality $2$ or more, choose $x\in X$, and choose $y\not\in X$.
    
    Let $C_{\geq 0}(X,x)$ denote the part of  $C(X,x)$ in non-negative degrees.
    By \autoref{lemma-D-acyclic}, $C(X,x)$ is acyclic, and so $C_{\geq 0}(X,x)$ is a resolution of $\Br_n/J_X$. In each degree, $C_{\geq 0}(X,x)$ is a direct sum of modules~$\Br_n/J_Y$ for~$Y \subsetneq X$. 
    By the inductive hypothesis the $\Tor$ groups of these modules vanish in positive degrees, and it follows that $C_{\geq 0}(X,x)$ is a \emph{flat} resolution of $\Br_n/J_X$. 
    Therefore $\Tor^{\Br_n}_*(\t,\Br_n/J_X)$ is computed by $\t \otimes_{\Br_n} C_{\geq 0}(X,x)$, which by \autoref{lemma-D-tensor-acyclic} is zero in positive degrees. \qedhere
\end{proof}

\section{Replacing Shapiro's Lemma}\label{Section:Shapiro}

In this section we discuss the failure of Shapiro's lemma for Brauer algebras, we provide a replacement for it, and we prove \autoref{thmaa}.

Recall that Shapiro's lemma states that if $G$ is a group and $H$ is a subgroup of $G$, then the natural map
\[
    H_\ast(H;\t)
    =
    \Tor_\ast^{RH}(\t,\t)
    \longrightarrow
    \Tor_\ast^{RG}(\t,RG\otimes_{RH}\t)
\]
is an isomorphism.
The case of interest to us is the following, where $m\leq n$:
\[
    \sigma\colon
    H_\ast(\fS_m;\t)
    =
    \Tor_\ast^{R\fS_m}(\t,\t)
    \xrightarrow{\ \cong\ }
    \Tor_\ast^{R\fS_n}(\t,R\fS_n\otimes_{R\fS_m}\t)
\]
In this specific case we will always denote the isomorphism by $\sigma$.

If it were possible, we would also be interested in a version of Shapiro's lemma for the Brauer algebra, identifying $\Tor_\ast^{\Br_m}(\t,\t)$ with $\Tor_\ast^{\Br_n}(\t,\Br_n\otimes_{\Br_m}\t)$.
The reason we would like this is that the highly-connected chain complex that we will exploit later is built out of precisely the modules $\Br_n\otimes_{\Br_m}\t$, as is familiar from proofs of homological stability for families of groups.
However no such version of Shapiro's lemma for Brauer algebras is possible, because $\Br_n$ is not necessarily flat as a right $\Br_m$-module.  
A concrete instance of this is given by the fact that  $\Tor^{\Br_2}_1(\Br_3,\t)\cong (R/\delta R)^{\oplus 3}$,
which shows that $\Br_3$ is not flat over $\Br_2$ when $\delta$ is not invertible.
This computation of $\Tor^{\Br_2}_1(\Br_3,\t)$ is obtained by considering the short exact sequence of right $\Br_2$-modules
\[
    0\to J\to\Br_3\to\Br_3/J\to0,
\]
where $J=J_{\{1,2,3\}}$ is the ideal spanned by diagrams with a left-to-left connection;
one can identify $J$ and $\Br_3/J$, and then compute their $\Tor^{\Br_2}_1(-,\t)$ using the method of~\cite[3.1.3]{Wei}.

% \begin{prop}
%     $\Tor^{\Br_2}_1(\Br_3,\t)\cong (R/\delta R)^{\oplus 3}$.
% \end{prop}

% \begin{proof}
%     Note that $\Br_2$ has $R$-basis consisting of the elements $1$, $U_1$ and $S_1$, with multiplication given by $U_1^2=\delta U_1$, $S_1^2=1$, $U_1S_1 = S_1U_1 = U_1$.
    
%     Let $J_3\subseteq \Br_3$ denote the span of all diagrams which have left-to-left connections.
%     Consider the short exact sequence of right $\Br_2$-modules:
%     \[
%         0\to J_3\to\Br_3\to\Br_3/J_3\to0
%     \]
%     We have $J_3\cong\Br_2^{\oplus 3}$, with generators the three diagrams in which nodes $2$ and $3$ are connected on the right.
%     And we have $\Br_3/J_3\cong\bbS^{\oplus 3}$, where $\bbS$ is a copy of $R\oplus R$ on which $S_1$ acts by transposing the summands, and on which $U_1$ acts as $0$.
%     The three generators of $\Br_3/J_3$ are the diagrams corresponding to the permutations $\mathrm{id}$, $(1\,2\,3)$ and $(1\,3\,2)$
    
%     From the above paragraph it follows quickly that $\Tor_1^{\Br_2}(\Br_3,\t)\cong\Tor_1^{\Br_2}(\t,\bbS)^{\oplus 3}$.
%     Now $\t=\Br_2/I$ and $\bbS=\Br_2/J$, where $I$ is the right-ideal generated by $U_1$ and $1-S_1$, and $J$ is the right-ideal generated by $U_1$, so that
%     $\Tor_1^{\Br_2}(\t,\bbS)\cong I\cap J/I\cdot J$.
%     We see that $I\cap J$ is the $R$-span of $U_1$, while $I\cdot J$ is the $R$-span of $\delta U_1$.
%     Thus $\Tor_1^{\Br_2}(\t,\bbS)\cong R/\delta R$, and the result follows.
% \end{proof}

This absence of Shapiro's lemma is what prevents us from presenting a `traditional' homological stability proof for Brauer algebras.
However, we are able instead to use the inductive resolutions of the previous section to prove a `replacement' for Shapiro's lemma, that will turn out to be just as useful, if not more.

To phrase the main result of this section, recall the inclusion and projection maps
\[
    R\fS_m\xrightarrow{\iota} \Br_m\xrightarrow{\pi} R\fS_m.
\]
These are compatible with the inclusions $\Br_{m}\to\Br_n$ and $R\fS_{m}\to R\fS_n$, and also respect the actions on the trivial module.  
They therefore induce the following maps of $\Tor$-groups.
\[
    \Tor_\ast^{R\fS_n}(\t,R\fS_n\otimes_{R\fS_m}\t)
    \xrightarrow{\iota_\ast}
    \Tor_\ast^{\Br_n}(\t,\Br_n\otimes_{\Br_m}\t)
    \xrightarrow{\pi_\ast}
    \Tor_\ast^{R\fS_n}(\t,R\fS_n\otimes_{R\fS_m}\t)
\]

\begin{thm}\label{prop-tor-quotient-Homology-Sm}
Let $n\geqslant m\geqslant 0$. 
Suppose that $\delta$ is invertible in $R$, or that $m<n$.
Then the maps
\[
    \iota_\ast\colon
    \Tor_\ast^{R\fS_n}(\t,R\fS_n\otimes_{R\fS_m}\t)
    \longrightarrow
    \Tor_\ast^{\Br_n}(\t,\Br_n\otimes_{\Br_m}\t)
\]
and
\[
    \pi_\ast\colon
    \Tor_\ast^{\Br_n}(\t,\Br_n\otimes_{\Br_m}\t)
    \longrightarrow
    \Tor_\ast^{R\fS_n}(\t,R\fS_n\otimes_{R\fS_m}\t)
\]
are mutually inverse isomorphisms.
\end{thm}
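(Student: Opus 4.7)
Since $\pi\circ\iota=\mathrm{id}_{R\fS_n}$ is compatible with the natural module maps, functoriality of $\Tor$ gives $\pi_\ast\circ\iota_\ast=\id$, so it suffices to prove that $\iota_\ast$ is an isomorphism. My strategy is to compute both sides simultaneously using a single chain complex.

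Fix a free right $R\fS_n$-resolution $P_\ast\to\t$ of $\t$, with $P_k=(R\fS_n)^{r_k}$, and view it as an exact complex of right $\Br_n$-modules via $\pi$. Writing $M=\Br_n\otimes_{\Br_m}\t$ and $N=R\fS_n\otimes_{R\fS_m}\t$, the fact that $J_{\{1,\dots,n\}}\subset\Br_n$ acts as zero on $N$ gives $R\fS_n\otimes_{\Br_n}M\cong N$, which extends to an isomorphism of chain complexes
\[
    P_\ast\otimes_{\Br_n}M\;\cong\;P_\ast\otimes_{R\fS_n}N.
\]
The right-hand complex computes $\Tor^{R\fS_n}_\ast(\t,N)\cong H_\ast(\fS_m;\t)$ by Shapiro's lemma for groups.

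The main obstacle is to show the left-hand complex also computes $\Tor^{\Br_n}_\ast(\t,M)$. Equivalently, each $P_k\cong(\Br_n/J_{\{1,\dots,n\}})^{r_k}$ should be acyclic for $-\otimes_{\Br_n}M$, i.e., one needs
\[
    \Tor^{\Br_n}_i(\Br_n/J_{\{1,\dots,n\}},\,M)=0\text{ for }i>0
\]
under either hypothesis. My plan here is two-step. First, I would show that every $\Br_n/J_X$ satisfying the hypotheses of \autoref{theorem-inductive} is $\Tor^{\Br_n}(R\fS_n,-)$-acyclic: tensoring the inductive resolutions $C(X,x)$ or $D(X,x,y)$ on the left with $R\fS_n$, every differential involving $U_{xw}$ or $U_{xw}U_{xy}$ becomes zero (these elements lie in $\ker\pi$) while every differential involving $1-\delta^{-1}U_{xw}$ or $1-U_{xw}U_{xy}$ becomes the identity, and an induction on $|X|$ parallel to the proof of \autoref{theorem-inductive} yields the vanishing. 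Second, I would construct a left $\Br_n$-resolution of $M$ by direct sums of modules $\Br_n/J_X$ with $|X|\le m$---for instance by inducing an $R\fS_m$-free resolution of $\t$ (viewed as a $\Br_m$-complex via $\pi$) up to $\Br_n$. Under our hypotheses every such $X$ satisfies \autoref{theorem-inductive}, so combining the two steps gives the required vanishing.

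The delicate point in the second step is exactness of the induced resolution, which reduces to vanishing of certain $\Tor$-groups over $\Br_m$; in the $\delta$-invertible case this follows from \autoref{theorem-inductive} applied inside $\Br_m$, while in the $m<n$ case one exploits the strict inequality to avoid requiring $\delta$ to be invertible. Once this is in hand, $P_\ast\otimes_{\Br_n}M$ simultaneously computes both $\Tor$ groups, yielding the isomorphism, and a direct inspection of the chain maps identifies it with $\iota_\ast$. Combined with $\pi_\ast\iota_\ast=\id$, this proves $\iota_\ast$ and $\pi_\ast$ are mutually inverse.
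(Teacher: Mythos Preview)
Your overall architecture is reasonable and genuinely different from the paper's, but the ``delicate point'' you flag is more than delicate: your suggested justification for exactness of the induced complex $L_\ast=\Br_n\otimes_{\Br_m}Q_\ast$ does not go through as stated.

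Concretely, if each $Q_k\cong R\fS_m^{s_k}$ is $\Br_n$-acyclic over $\Br_m$, then $\Br_n\otimes_{\Br_m}Q_\ast$ computes $\Tor^{\Br_m}_\ast(\Br_n,\t)$, so exactness of $L_\ast$ in positive degrees is \emph{equivalent} to $\Tor^{\Br_m}_i(\Br_n,\t)=0$ for $i>0$. But the paper exhibits $\Tor^{\Br_2}_1(\Br_3,\t)\cong(R/\delta R)^{\oplus 3}$, so this fails precisely in the case $m<n$ with $\delta$ non-invertible that you hoped to cover by ``exploiting the strict inequality''. The strict inequality $m<n$ helps over $\Br_n$ (it supplies the auxiliary node $y$ for $D(X,x,y)$), but over $\Br_m$ there is no such $y$ when $X=\{1,\dots,m\}$, and \autoref{theorem-inductive} inside $\Br_m$ tells you about $\Tor^{\Br_m}_\ast(\t,-)$, not about $\Tor^{\Br_m}_\ast(\Br_n,-)$. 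So neither branch of your justification works.

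The fix --- and this is exactly the mechanism the paper uses --- is to observe that $\Br_n\otimes_{\Br_m}R\fS_m\cong\Br_n/J_m$ as a right $R\fS_m$-module, so that $L_\ast\cong(\Br_n/J_m)\otimes_{R\fS_m}Q_\ast$; then exactness is immediate because $\Br_n/J_m$ is \emph{free} over $R\fS_m$ (\autoref{lem-quotient-by-Jm-free}). Once you have this lemma in hand, the paper's route is shorter: start instead with a free $\Br_n$-resolution $P_\ast$ of $\t$, use $M\cong(\Br_n/J_m)\otimes_{R\fS_m}\t$ (\autoref{lem-quotient-by-Jm-tensor-product}), and note that $P_\ast\otimes_{\Br_n}(\Br_n/J_m)$ is a free $R\fS_m$-resolution of $\t$ by \autoref{theorem-inductive} and \autoref{lem-quotient-by-Jm-free}. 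This avoids your extra layer (the extension of \autoref{theorem-inductive} to $R\fS_n$ in place of $\t$, and the second resolution $L_\ast$) entirely.
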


\begin{proof}[Proof of \autoref{thmaa}]
    Taking $\delta$ invertible and $m=n$, the result follows immediately using the identifications    $R\fS_n\otimes_{R\fS_m}\t\cong\t$ and  $\Br_n\otimes_{\Br_m}\t\cong \t$.
\end{proof}

\autoref{prop-tor-quotient-Homology-Sm} will be proved after some preparatory definitions and lemmas.

\begin{defn}\label{def-Jm}
For~$m\leq n$, let~$J_m$ be the left-ideal~$J_X$ defined in \autoref{Def:Jx}  for~$X=\{1,\ldots ,m\}$.
Thus $J_m$ is the left-ideal spanned by all diagrams that have at least one arc among the nodes labelled $1,\ldots,m$ on the right.
\end{defn}

Observe that $\Br_n$ is a right $R\fS_m$-module, and that this module structure preserves $J_m$, so that $\Br_n/J_m$ becomes a right $R\fS_m$-module.

\begin{lem}\label{lem-quotient-by-Jm-free}
For~$m\leq n$, $\Br_n/J_m$ is a free~$R\fS_m$-module.
\begin{proof}
$\Br_n/J_m$ has basis consisting of the diagrams that have no arc between any two nodes in $\{1,\ldots,m\}$.
In fact, $\fS_m$ acts freely on this basis. This follows from the observation that multiplying such a diagram with a permutation in~$\fS_m$ results again in a diagram with no arc between any two nodes in $\{1,\dots,m\}$ and the stabilizer of any such diagram is only the trivial permutation.
\end{proof}
\end{lem}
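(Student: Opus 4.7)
The plan is to give an explicit free $R\fS_m$-basis for $\Br_n/J_m$ by analysing the right action of $\fS_m$ on the standard diagram basis and showing it is free.

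First, I would set up an $R$-basis for $\Br_n/J_m$. Since $J_m$ is the $R$-span of all diagrams that have some arc between two nodes on the right labelled by elements of $\{1,\ldots,m\}$, the quotient $\Br_n/J_m$ has an $R$-basis $\mathcal{B}$ consisting of the classes of those diagrams $d$ such that no two of the right-nodes $1,\ldots,m$ of $d$ are joined by an arc. Equivalently, for each $i\in\{1,\ldots,m\}$ the right-node labelled $i$ in such a $d$ has a unique partner $y_i(d)$ that lies either among the left-nodes $-1,\ldots,-n$ or among the right-nodes $m+1,\ldots,m+(n-m)$.

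Next, I would unpack the right action of $\fS_m\subseteq \Br_m\subseteq \Br_n$ on $\mathcal{B}$. A permutation $\sigma\in\fS_m$ sits in $\Br_n$ as the diagram whose wires connect $-i$ on the left to $\sigma(i)$ on the right for $i\leq m$ and horizontally connect $-j$ to $j$ for $j>m$. Diagrammatic concatenation shows that right multiplication of a basis diagram $d$ by (the diagram of) $\sigma$ has no effect on the left-hand structure of $d$, nor on any connection incident to a right-node labelled in $\{m+1,\ldots,n\}$, and simply relabels the right-nodes $1,\ldots,m$ by the permutation $\sigma$. In particular no loops arise, so the product is again a basis diagram in $\mathcal B$, and the action of $\fS_m$ preserves $\mathcal{B}$.

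Then I would verify that this action is free. Suppose $d\in\mathcal{B}$ satisfies $d\cdot\sigma=d$ in $\Br_n/J_m$, and hence (since both sides are single basis elements of $\Br_n$) as diagrams in $\Br_n$. For each $i\leq m$, the unique partner of the right-node labelled $i$ in $d$ is $y_i(d)$, and the unique partner of the right-node labelled $i$ in $d\cdot\sigma$ is $y_{\sigma^{-1}(i)}(d)$. Equality of diagrams forces $y_i(d)=y_{\sigma^{-1}(i)}(d)$ for every $i\leq m$. Since the partners $y_1(d),\ldots,y_m(d)$ are pairwise distinct elements outside $\{1,\ldots,m\}$, this forces $\sigma^{-1}(i)=i$ for every $i$, so $\sigma=1$. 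Thus $\fS_m$ acts freely on $\mathcal{B}$.

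Finally, choosing one representative in each $\fS_m$-orbit of $\mathcal{B}$ yields a subset $\mathcal{B}_0\subseteq\mathcal{B}$ such that $\mathcal{B}=\bigsqcup_{b\in\mathcal{B}_0}b\cdot\fS_m$, and thus
\[
\Br_n/J_m \;=\;\bigoplus_{b\in\mathcal B_0} b\cdot R\fS_m
\]
as a right $R\fS_m$-module, exhibiting $\Br_n/J_m$ as a free $R\fS_m$-module. The only step requiring genuine care is the freeness of the action, which hinges on the observation that in a basis diagram every right-node $i\leq m$ has its partner strictly outside $\{1,\ldots,m\}$, so $\sigma$ cannot permute these partners without moving nodes.
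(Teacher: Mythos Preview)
Your proposal is correct and follows essentially the same approach as the paper: identify the $R$-basis of $\Br_n/J_m$ as diagrams with no arc among the right-nodes $\{1,\ldots,m\}$, observe that the right $\fS_m$-action preserves this basis, and show the action is free so that orbit representatives give a free $R\fS_m$-basis. Your treatment is simply more explicit than the paper's, in particular your use of the partner function $y_i(d)$ to pin down why the stabilizer is trivial.
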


\begin{lem}\label{lem-quotient-by-Jm-tensor-product}
    For~$m\leq n$, there is an isomorphism of left $\Br_n$-modules
    \[
    \Br_n/J_m\otimes_{R\fS_m}\t \cong \Br_n\otimes_{\Br_m}\t 
    \]
    under which $(b+J_m)\otimes r\in\Br_n/J_m\otimes_{R\fS_m}\t$ corresponds to $b\otimes r\in \Br_n\otimes_{\Br_m}\t$.
\end{lem}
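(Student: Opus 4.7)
The plan is to construct an explicit two-sided inverse. I would define two maps: the forward map
\[
    \psi\colon \Br_n/J_m\otimes_{R\fS_m}\t \longrightarrow \Br_n\otimes_{\Br_m}\t,\qquad (b+J_m)\otimes r\longmapsto b\otimes r,
\]
and the candidate inverse
\[
    \phi\colon \Br_n\otimes_{\Br_m}\t \longrightarrow \Br_n/J_m\otimes_{R\fS_m}\t,\qquad b\otimes r\longmapsto (b+J_m)\otimes r.
\]
Both are manifestly left $\Br_n$-linear once well-defined, and $\psi\circ\phi$, $\phi\circ\psi$ act as the identity on elementary tensors. So the entire content of the proof is checking that each of $\psi$ and $\phi$ is well-defined on its domain.

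For $\psi$, I view it as induced by the $R$-bilinear map $(b,r)\mapsto b\otimes r$ out of $\Br_n\times\t$, and I must check (i) that this vanishes when $b\in J_m$, and (ii) that it is $R\fS_m$-balanced. Condition (ii) follows because the target is $\Br_m$-balanced and $R\fS_m$ is a subring of $\Br_m$. Condition (i) uses \autoref{lemma-Uab}\eqref{lemma-Uab-two}: $J_m$ is the left ideal generated by the $U_{ab}$ with $a,b\in\{1,\ldots,m\}$, and since each such $U_{ab}$ lies in $\Br_m\subseteq\Br_n$ and acts as zero on $\t$, a typical generator $\alpha U_{ab}\otimes r$ becomes $\alpha\otimes U_{ab}\cdot r=0$ in $\Br_n\otimes_{\Br_m}\t$.

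For $\phi$, I check that the $R$-bilinear map $(b,r)\mapsto (b+J_m)\otimes r$ out of $\Br_n\times\t$ is $\Br_m$-balanced. Since balancing is $R$-linear and multiplicative in the middle factor, it suffices to verify it on the algebra generators $S_i$ and $U_i$ of $\Br_m$ with $i<m$. Balancing against $S_i$ is exactly the $R\fS_m$-balancing in the target. Balancing against $U_i$ amounts to showing $(bU_i+J_m)\otimes r=(b+J_m)\otimes(U_i\cdot r)=0$; and indeed $bU_i\in J_m$, because the diagram $U_i$ has an arc on the right between nodes $i$ and $i+1$, both of which lie in $\{1,\ldots,m\}$ (as $i<m$), and this arc persists unchanged in the product $bU_i$.

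Thus both maps are well-defined $\Br_n$-module homomorphisms, and they are mutually inverse. The main subtlety is the last point: the single fact driving the whole argument is that right-multiplication by a generator $U_i$ with $i<m$ simultaneously (a) lands inside $J_m$, and (b) annihilates $\t$ on the left—this is precisely the compatibility needed for the two sides of the claimed isomorphism to agree.
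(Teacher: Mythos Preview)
Your proof is correct and follows essentially the same approach as the paper: both check that the correspondence $(b+J_m)\otimes r \leftrightarrow b\otimes r$ is well-defined in each direction, and then conclude. The only difference is in the bookkeeping for the $\phi$-direction: the paper verifies $\Br_m$-balancedness using the $R$-module decomposition $\Br_m = R\fS_m \oplus (\Br_m\cap J_m)$ (every diagram in $\Br_m$ is either a permutation or has a right-to-right arc among $\{1,\dots,m\}$), whereas you check it on the algebra generators $S_i,U_i$. Both routes are valid; the paper's avoids appealing to the fact that the $S_i$ and $U_i$ generate $\Br_m$, which is standard but not proved in the paper.
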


\begin{proof}
It is sufficient to check that the correspondence described in the statement is a well-defined map in each direction.
This holds because elements of $R\fS_m$ act as the identity on $\t$ while elements of $J_m$ act as $0$, and because $\Br_m$ is spanned by $R\fS_m$ and $J_m$.
\end{proof}

Now recall from \autoref{theorem-inductive} that, under  the hypotheses of \autoref{prop-tor-quotient-Homology-Sm},
\[
\Tor^{\Br_n}_\ast (\t, \Br_n/J_m)=\begin{cases} \t &\mbox{if } \ast= 0 \\
0 & \mbox{if } \ast>0 \end{cases}.
\]

\begin{proof}[Proof of \autoref{prop-tor-quotient-Homology-Sm}]
Since $\pi_\ast\circ\iota_\ast$ is the identity map, it is sufficient to show that $\iota_\ast$ is an isomorphism.
To do this, we will directly construct an isomorphism
\[
    \Theta
    \colon
    \Tor_*^{\Br_n}(\t,\Br_n\otimes_{\Br_m}\t)
    \xrightarrow{\cong} 
    H_\ast(\fS_m;\t).
\]
and show that the composite 
\[
    H_\ast(\fS_m;\t)
    \xrightarrow[\cong]{\sigma}
    \Tor_\ast^{R\fS_n}(\t,R\fS_n\otimes_{R\fS_m}\t)
    \xrightarrow{\iota_\ast}
    \Tor_\ast^{\Br_n}(\t,\Br_n\otimes_{\Br_m}\t)
    \xrightarrow[\cong]{\Theta}
    H_\ast(\fS_m;\t)
\]
is the identity map.

Let us construct the map $\Theta$.
Let~$P_\ast$ be a free~$\Br_n$-resolution of $\t$. 
Then the domain of $\Theta$ is the homology of the chain complex $P_\ast\otimes_{\Br_n}\Br_n\otimes_{\Br_m}\t$, and \autoref{lem-quotient-by-Jm-tensor-product} gives us an isomorphism
\[
    \theta
    \colon 
    P_\ast\otimes_{\Br_n}\Br_n\otimes_{\Br_m}\t 
    \xrightarrow{\ \cong \ }
    P_\ast\otimes_{\Br_n}\Br_n/J_m\otimes_{R\fS_m}\t. 
\]
Since $P_\ast\otimes_{\Br_n}\Br_n/J_m$ computes $\Tor_*^{\Br_n}(\t,\Br_n/J_m)$, \autoref{theorem-inductive} shows that $P_\ast\otimes_{\Br_n}\Br_n/J_m$ is a $\fS_m$-resolution of $\t$.
And since $P_\ast$ is a free $\Br_n$-resolution and $\Br_n/J_m$ is a free $\fS_m$-module, $P_\ast\otimes_{\Br_n}\Br_n/J_m$ is in fact a free $\fS_m$-resolution of $\t$.
Therefore $P_\ast\otimes_{\Br_n}\Br_n/J_m\otimes_{R\fS_m}\t$ is a chain complex whose homology is~$H_\ast(\fS_m;\t)$.
Let $\Theta$ be the map induced by $\theta$.

We now show that the composite $\Theta\circ\iota_\ast\circ\sigma$ is the identity map.  
If we choose a projective  $R\fS_n$-resolution $Q_\ast$ of $\t$, then $\sigma$ is given on the chain level by 
\[
    Q_\ast\otimes_{R\fS_m}\t
    \to 
    Q_\ast\otimes_{R\fS_n}R\fS_n\otimes_{R\fS_m}\t,
    \qquad
    q\otimes r\mapsto q\otimes 1\otimes r.
\]
If we choose a chain map $\tilde\iota\colon Q_\ast\to P_\ast$ that lies over the identity map on $\t$ and respects the inclusion $\iota\colon R\fS_n\to\Br_n$, then $\iota_\ast$ is represented by the chain map 
\[
    Q_\ast\otimes_{R\fS_n}R\fS_n\otimes_{R\fS_m}\t 
    \to 
    P_\ast\otimes_{\Br_n}\Br_n\otimes_{\Br_m}\t,
    \qquad
    q\otimes x\otimes r
    \mapsto
    \tilde\iota(q)\otimes\iota(x)\otimes r.
\]
Finally, we described $\Theta$ explicitly on the chain level in the last paragraph. 
Now we can verify that $\Theta\circ\iota_\ast\circ\sigma$ is given on the chain level by
\[
    Q_\ast\otimes_{R\fS_m}\t
    \to
    P_\ast\otimes_{\Br_n}\Br_n/J_m\otimes_{R\fS_m}\t,
    \qquad
    q\otimes r\mapsto \tilde\iota(q)\otimes(1+J_m)\otimes r.
\]
This map is obtained by applying $-\otimes_{R\fS_m}\t$ to the map
\[
    Q_\ast
    \to
    P_\ast\otimes_{\Br_n}\Br_n/J_m
    \qquad
    q\mapsto \tilde\iota(q)\otimes(1+J_m).
\]
But this is a map of projective $R\fS_m$-resolutions of $\t$, lying above the identity map on $\t$, and respecting the module structure. 
It is therefore a chain homotopy equivalence, and the same therefore holds for our chain-level representative of $\Theta\circ\iota_\ast\circ\sigma$.
This completes the proof.
\end{proof}

\section{High connectivity}\label{Section: HighConnectivity}

In this section, we introduce a chain complex built from the induced modules $\Br_n\otimes_{\Br_m}\t$ of \autoref{prop:HomBr}. This chain complex is analogous to the chain complexes used in Randal-Williams--Wahl \cite{RW} when considering the stability category $\CBr$ from \cite{PatztRepstab}. Furthermore, as generically required for a homological stability proof, we show that this is highly acyclic.

\subsection{The chain complex}

\begin{Def}\label{def:chain complex}
For~$n$ a non-negative integer, we define the chain complex $C_n=(C_n)_\ast$ of~$\Br_n$-modules as follows. The degree~$p$ part $(C_n)_p$ is non-zero in degrees $-1\leq p \leq n-1$, where it is given by
\[
(C_n)_p=\Br_n\otimes_{\Br_{n-(p+1)}}\t.
\]
So in degree~$-1$ it follows that~$(C_n)_{-1}=\Br_n\otimes_{\Br_{n}}\t\cong\t$.
For~$0\leq p \leq n-1$ the degree~$p$ differential~$\partial^p$ is given by the alternating sum
\[ \partial^p = \sum_{i=0}^{p} (-1)^i d^p_i \colon (C_n)_p \longrightarrow (C_n)_{p-1}.\]
Where, algebraically, the map~$d^p_i$ for~$0\leq i\leq p$ is given by
\begin{eqnarray*}
d^p_i \colon \Br_n \tens[\Br_{n-(p+1)}] \t &\longrightarrow& \Br_n \tens[\Br_{n-p}] \t\\
x\otimes r &\mapsto& (x\cdot S_{n-p+i-1}\cdots S_{n-p})\otimes r.
\end{eqnarray*}

In terms of diagrams, elements in degree~$p$ can be described as in \autoref{prop:HomBr}, i.e.~as diagrams with an $(n-(p+1))$-box at the top right. The map~$d^p_i$ first connects the $(n-p+i)$-th node of the right hand side of the diagram with the~$(n-p)$-th node (the top node under the box) and then extends the box over this node, corresponding to the $(n-(p+1))$-box growing by one node to form an $(n-p)$-box.
In other words, if the nodes below the $(n-(p+1))$-box are labelled $0,\ldots,p$ from top to bottom, then $d_i^p$ lifts up node $i$ and plugs it into the box.
\end{Def}

Examples of the action of the maps~$d^p_i$ when~$n=5$ and~$p=2$ are shown below.

\begin{gather*}
d^2_2\left(
\begin{tikzpicture}[scale=0.9, x=1.5cm,y=-.5cm,baseline=-1.05cm]
\node[v] (a1) at (0,0) {};
\node[v] (a2) at (0,1) {};
\node[v] (a3) at (0,2) {};
\node[v] (a4) at (0,3) {};
\node[v] (a5) at (0,4) {};
\node[b] (b1) at (1,0) {$2$};
\node[v] (b3) at (1,2) {};
\node[v] (b4) at (1,3) {};
\node[v] (b5) at (1,4) {};
\draw[e] (a2) to[out=0, in=180] (b1);
\draw[e] (a1) to[out=0, in=0]   (a4);
\draw[e] (b5) to[out=180, in=180] (b4);
\draw[e] (a3) to[out=0, in=180] (b3);
\draw[e] (a5) to[out=0,in=180] (b1);
\end{tikzpicture}
\right)\,=\quad
\begin{tikzpicture}[scale=0.9, x=1.5cm,y=-.5cm,baseline=-1.05cm]
\node[v] (a1) at (0,0) {};
\node[v] (a2) at (0,1) {};
\node[v] (a3) at (0,2) {};
\node[v] (a4) at (0,3) {};
\node[v] (a5) at (0,4) {};
\node[b] (b1) at (1,0) {$3$};
\node[v] (b3) at (1,3) {};
\node[v] (b4) at (1,4) {};
\draw[e] (a2) to[out=0, in=180] (b1);
\draw[e] (a1) to[out=0, in=0]   (a4);
\draw[e] (b1) to[out=180, in=180] (b4);
\draw[e] (a3) to[out=0, in=180] (b3);
\draw[e] (a5) to[out=0,in=180] (b1);
\end{tikzpicture}
\quad
,
\qquad
d^2_1\left(
\begin{tikzpicture}[scale=0.9, x=1.5cm,y=-.5cm,baseline=-1.05cm]
\node[v] (a1) at (0,0) {};
\node[v] (a2) at (0,1) {};
\node[v] (a3) at (0,2) {};
\node[v] (a4) at (0,3) {};
\node[v] (a5) at (0,4) {};
\node[b] (b1) at (1,0) {$2$};
\node[v] (b3) at (1,2) {};
\node[v] (b4) at (1,3) {};
\node[v] (b5) at (1,4) {};
\draw[e] (a2) to[out=0, in=180] (b1);
\draw[e] (a1) to[out=0, in=0]   (a4);
\draw[e] (b5) to[out=180, in=180] (b4);
\draw[e] (a3) to[out=0, in=180] (b3);
\draw[e] (a5) to[out=0,in=180] (b1);
\end{tikzpicture}
\right)\,=\quad
\begin{tikzpicture}[scale=0.9, x=1.5cm,y=-.5cm,baseline=-1.05cm]
\node[v] (a1) at (0,0) {};
\node[v] (a2) at (0,1) {};
\node[v] (a3) at (0,2) {};
\node[v] (a4) at (0,3) {};
\node[v] (a5) at (0,4) {};
\node[b] (b1) at (1,0) {$3$};
\node[v] (b3) at (1,3) {};
\node[v] (b5) at (1,4) {};
\draw[e] (a2) to[out=0, in=180] (b1);
\draw[e] (a1) to[out=0, in=0]   (a4);
\draw[e] (b5) to[out=180, in=180] (b1);
\draw[e] (a3) to[out=0, in=180] (b3);
\draw[e] (a5) to[out=0,in=180] (b1);
\end{tikzpicture}
\\
\\
d^2_0\left(
\begin{tikzpicture}[scale=0.9, x=1.5cm,y=-.5cm,baseline=-1.05cm]
\node[v] (a1) at (0,0) {};
\node[v] (a2) at (0,1) {};
\node[v] (a3) at (0,2) {};
\node[v] (a4) at (0,3) {};
\node[v] (a5) at (0,4) {};
\node[b] (b1) at (1,0) {$2$};
\node[v] (b3) at (1,2) {};
\node[v] (b4) at (1,3) {};
\node[v] (b5) at (1,4) {};
\draw[e] (a2) to[out=0, in=180] (b1);
\draw[e] (a1) to[out=0, in=0]   (a4);
\draw[e] (b5) to[out=180, in=180] (b4);
\draw[e] (a3) to[out=0, in=180] (b3);
\draw[e] (a5) to[out=0,in=180] (b1);
\end{tikzpicture}
\right)\,=\quad
\begin{tikzpicture}[scale=0.9, x=1.5cm,y=-.5cm,baseline=-1.05cm]
\node[v] (a1) at (0,0) {};
\node[v] (a2) at (0,1) {};
\node[v] (a3) at (0,2) {};
\node[v] (a4) at (0,3) {};
\node[v] (a5) at (0,4) {};
\node[b] (b1) at (1,0) {$3$};
\node[v] (b4) at (1,3) {};
\node[v] (b5) at (1,4) {};
\draw[e] (a2) to[out=0, in=180] (b1);
\draw[e] (a1) to[out=0, in=0]   (a4);
\draw[e] (b5) to[out=180, in=180] (b4);
\draw[e] (a3) to[out=0, in=180] (b1);
\draw[e] (a5) to[out=0,in=180] (b1);
\end{tikzpicture}
\quad
,
\qquad
d^2_0\left(
\begin{tikzpicture}[scale=0.9, x=1.5cm,y=-.5cm,baseline=-1.05cm]
\node[v] (a1) at (0,0) {};
\node[v] (a2) at (0,1) {};
\node[v] (a3) at (0,2) {};
\node[v] (a4) at (0,3) {};
\node[v] (a5) at (0,4) {};
\node[b] (b1) at (1,0) {$2$};
\node[v] (b3) at (1,2) {};
\node[v] (b4) at (1,3) {};
\node[v] (b5) at (1,4) {};
\draw[e] (a2) to[out=0, in=180] (b1);
\draw[e] (a1) to[out=0, in=0]   (a4);
\draw[e] (b1) to[out=180, in=180] (b3);
\draw[e] (a3) to[out=0, in=180] (b4);
\draw[e] (a5) to[out=0,in=180] (b5);
\end{tikzpicture}
\right)\, = \quad
\begin{tikzpicture}[scale=0.9, x=1.5cm,y=-.5cm,baseline=-1.05cm]
\node[v] (a1) at (0,0) {};
\node[v] (a2) at (0,1) {};
\node[v] (a3) at (0,2) {};
\node[v] (a4) at (0,3) {};
\node[v] (a5) at (0,4) {};
\node[b] (b1) at (1,0) {$3$};
\node[v] (b4) at (1,3) {};
\node[v] (b5) at (1,4) {};
\draw[e] (a2) to[out=0, in=180] (b1);
\draw[e] (a1) to[out=0, in=0]   (a4);
\draw[e] (b1) to[out=210, in=270] (.5,0);
\draw[e] (b1) to[out=150, in=90] (.5,0);
\draw[e] (a3) to[out=0, in=180] (b4);
\draw[e] (a5) to[out=0,in=180] (b5);
\end{tikzpicture}= 0 \quad 
\end{gather*}

\begin{rem}
The complex~$C_n$ is analogous to the semisimplicial set defined in Randal-Williams--Wahl \cite{RW}, using the symmetric monoidal category~$\mathcal{C}_{\Br}$ introduced by the third author in~\cite{PatztRepstab}. However we do not exploit this analogy, as our numbering system opposes that of~\cite{PatztRepstab}.
\end{rem}

\begin{lem}\label{lemma-complex}
    $(C_n)_\ast$ is a chain complex. That is, the boundary maps of~$(C_n)_\ast$ satisfy $\partial^{p-1}\circ \partial^p=0$.
\end{lem}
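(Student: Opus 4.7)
The plan is to verify the standard simplicial identities
\[ d_i^{p-1}\circ d_j^p = d_{j-1}^{p-1}\circ d_i^p \quad \text{for } 0\leq i<j\leq p, \]
after which $\partial^{p-1}\circ\partial^p = 0$ follows from the familiar combinatorial cancellation of alternating-sum differentials: in the expansion of $\partial^{p-1}\partial^p$, each pair $(-1)^{i+j}d_i^{p-1}d_j^p$ and $(-1)^{i+j-1}d_{j-1}^{p-1}d_i^p$ cancels.

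The simplicial identities have a clean diagrammatic interpretation. Recall that $d_i^p$ takes a diagram with $p+1$ nodes labelled $0,\ldots,p$ beneath the box and plugs the $i$th node into the box, leaving the remaining $p$ nodes order-preservingly relabelled as $0,\ldots,p-1$. For $0\le i<j\le p$, both $d_i^{p-1}\circ d_j^p$ and $d_{j-1}^{p-1}\circ d_i^p$ end up absorbing the original $i$th and $j$th nodes into the (now larger) box, and so produce the same diagram.

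For the algebraic verification, set $a = n-p$ (so $a\geq 1$, since $p\leq n-1$) and write $\sigma_k = S_{a+k-1}\cdots S_a$ and $\tau_k = S_{a+k}\cdots S_{a+1}$, with empty products equal to $1$, so that $d_k^p$ is right multiplication by $\sigma_k$ and $d_k^{p-1}$ is right multiplication by $\tau_k$. The simplicial identity becomes
\[ x\,\sigma_j\tau_i\otimes r = x\,\sigma_i\tau_{j-1}\otimes r \quad \text{in } \Br_n\otimes_{\Br_{a+1}}\t. \]
Computing the two permutations in $\fS_n$ directly shows that they agree everywhere except on $\{a,a+1\}$, whose images are interchanged; equivalently $\sigma_j\tau_i = \sigma_i\tau_{j-1}\cdot S_a$. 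Since $S_a\in\fS_{a+1}\subset\Br_{a+1}$ acts trivially on $\t$, it can be absorbed across the tensor product, yielding the required equality.

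The only mildly technical point is the permutation identity $\sigma_j\tau_i = \sigma_i\tau_{j-1}S_a$, a standard ``shift-and-swap'' calculation on the cyclic elements $\sigma_k$ and $\tau_k$; once this is in hand, the rest of the argument is routine bookkeeping with alternating signs.
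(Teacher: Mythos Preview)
Your proof is correct and follows essentially the same approach as the paper: both verify the simplicial identities $d^{p-1}_i d^p_j = d^{p-1}_{j-1} d^p_i$ for $i<j$ and then appeal to the standard alternating-sum cancellation. Your write-up is in fact slightly more precise: the paper simply asserts that the two products of $S$-generators agree ``by repeated use of the braiding relations'' and defers details to another reference, whereas you correctly isolate the extra factor $S_a$ (which is not killed by braid relations alone) and explain that it is absorbed across the tensor over $\Br_{a+1}$ since it acts trivially on $\t$.
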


\begin{proof}
    We show that iterated differentials vanish, by observing that if~$p\geqslant 1$ and $0\leqslant j<k\leqslant p$, then the composite maps~$d^{p-1}_jd^p_k,d^{p-1}_{k-1}d^{p}_j\colon (C_n)_p\to (C_n)_{p-2}$ coincide.  
    It follows that~$\partial^{p-1}\circ 
    \partial^p=0$.
    We have
    \[
        d^{p-1}_jd^p_k(x\otimes r) = [x\cdot(S_{n-i+k-1}\cdots S_{n-i})\cdot (S_{n-i+j}\cdots S_{n-i+1})]\otimes r
    \]
    and
    \[
        d^{p-1}_{k-1}d^{p}_j(x\otimes r) = [x\cdot(S_{n-i+j-1}\cdots S_{n-i})\cdot (S_{n-i+k-1}\cdots S_{n-i+1})]\otimes r.
    \]
    By repeated use of the braiding relations on the~$S_m$, these maps coincide. (For a more detailed proof compare with~\cite[Lemma 4.8]{HepworthBoydStability}.)
\end{proof}

The rest of this section is devoted to proving that  this chain complex is highly acyclic, which is made precise in the following theorem.

\begin{thm}\label{thm:Brauer complex}
$H_i(C_n) = 0$ for $i \le \frac{n-3}2$.
\end{thm}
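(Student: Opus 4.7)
The plan is to prove the theorem by induction on $n$, after setting up a filtration of $C_n$ that separates the basis diagrams by a structural feature. The base cases $n \le 2$ are trivial, since the claimed range $i \le \frac{n-3}{2}$ is negative. For the inductive step, I assume the result for all $n' < n$ and would use filtrations whose associated graded pieces are controlled by smaller instances of the same theorem, or by the known high-connectivity of the chain complex of injective words.

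Specifically, I would filter $C_n$ by the ``type'' of the last left-node $-n$: let $F_A \subseteq F_{AB} \subseteq C_n$ be the nested subcomplexes where $-n$ connects to the box, to another left-node, or (in the full $C_n$) to a right-node respectively. These are subcomplexes because the differentials of $C_n$ only modify the right nodes and the box: left-left arcs are never created or destroyed, and a left-right arc incident to $-n$ can only be converted to a left-box connection (moving case C into case A). Then $F_A \cong C_{n-1}$ via removing $-n$ and contracting the box, so by induction $H_i(F_A) = 0$ for $i \le \frac{n-4}{2}$; and $F_{AB}/F_A$ decomposes over the choice of left-left partner $-j$ of $-n$ as a direct sum of copies of $C_{n-2}$ shifted up by $2$, so by induction $H_i(F_{AB}/F_A) = 0$ for $i \le \frac{n-1}{2}$.

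The hard piece is the quotient $C_n/F_{AB}$, in which $-n$ is connected to some right-node. I would introduce a secondary filtration indexed by the position $q$ of that right-node partner; its associated graded pieces are ``truncated'' versions of $C_{n-1}$ in which only the face maps $d_q^{p-1}, d_{q+1}^{p-1}, \ldots, d_{p-1}^{p-1}$ survive (the face $d_q^p$ is killed because it moves $-n$ into case A, and the faces with smaller index are quotiented out by the $q$-filtration). These truncated complexes can in turn be split further, relating them to the permutation-type subcomplex $C_{n-1}^{\mathrm{perm}}$ -- which, after identifying with the augmented chain complex of injective words on $[n-1]$, is known to have reduced homology concentrated in top degree and is therefore very highly acyclic -- and to smaller Brauer complexes handled by induction. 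Running this analysis carefully yields the required vanishing of $H_i(C_n/F_{AB})$ in the desired range.

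Assembling everything via the long exact sequences of $F_A \hookrightarrow F_{AB} \hookrightarrow C_n$, and matching up the various vanishing ranges, produces the conclusion $H_i(C_n) = 0$ for $i \le \frac{n-3}{2}$. The main obstacle will be the analysis of $C_n/F_{AB}$: the renumbering of positions under the face maps, the sign bookkeeping, and the interaction with the ``loop-at-box'' relation all need to be tracked carefully. This is exactly what the authors refer to as the ``intricate diagrammatical argument that features repeated splittings and filtrations,'' and it is where the half-integer improvement from $\frac{n-4}{2}$ to $\frac{n-3}{2}$ is ultimately earned.
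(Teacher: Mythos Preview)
Your approach differs from the paper's and contains a genuine gap.  The paper does not induct on $n$ at all; instead it decomposes $C_n=\bigoplus_k C_n^{(k)}$ according to the number $k$ of left-to-left arcs (which is invariant under the differential), then filters each $C_n^{(k)}$ by the number $j$ of right-to-right arcs not touching the box.  The filtration quotients are identified, after a degree shift of $k+j$, with direct sums of \emph{complexes of injective words with $k+j$ separators}, and these are shown to be highly acyclic by a short induction on the number of separators, with Farmer's theorem on ordinary injective words as the base case.

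The concrete error in your proposal is the claim that $F_{AB}/F_A$ decomposes as copies of $C_{n-2}$ shifted by $2$.  Deleting the arc from $-n$ to $-j$ leaves $n-2$ left nodes, but the right-hand side is untouched: in degree $p$ it still carries an $(n-p-1)$-box and $p+1$ nodes, hence $n$ endpoints in total.  The two surplus right endpoints must be joined by a right-to-right connection, and its \emph{location} (which pair of nodes, or which node-to-box) is genuine extra combinatorial data that the differential moves around; there is no canonical such arc to delete.  Already for $n=3$, $p=1$ the summand for a fixed $j$ has three basis diagrams, whereas $(C_1)_{p-2}=(C_1)_{-1}=\t$ has one, so no shift makes the identification work.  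The same obstruction --- the right-to-right structure forced by left-to-left arcs --- is present throughout your sketch for $C_n/F_{AB}$, and it is precisely what the paper's ``separators'' are invented to encode.  Without a device that tracks this data, the single-node inductive strategy does not close.
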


To prove this theorem, we introduce a number of decompositions, filtrations, and chain isomorphisms, which eventually reduce the theorem to high acyclicity of the complex of injective words.

\subsection{A decomposition and a filtration}

We first observe that for a single diagram, corresponding to a monomial in~$\Br_n$, the number of left-to-left connections (which is equal to the number of right-to-right connections) is invariant under the differential $\partial$. This is because the number of left-to-left connections remains invariant under multiplication by any~$S_i$, and under the operation of extending the box over a new node. 

\begin{Def}\label{def-decomposition}
Fix~$0\leq k \leq \lfloor n/2\rfloor$. Let~$C_n^{(k)}=(C_n^{(k)})_\ast$ be the subcomplex of the chain complex~$C_n$ with basis at degree~$p$ consisting of those diagrams in~$(C_n)_p$ which have~$k$ left-to-left connections.
\end{Def}

Thus we can decompose~$C_n$ as
\[ C_n = C_n^{(0)} \oplus C_n^{(1)} \oplus \dots \oplus C_n^{(\lfloor n/2\rfloor)}.\]

\begin{prop}\label{prop:Brauer complex level (k)}
$H_i(C_n^{(k)}) = 0$ for all $i \le n-k-2$.
\end{prop}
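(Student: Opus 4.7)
The plan is to establish \autoref{prop:Brauer complex level (k)} by a sequence of decompositions and filtrations that reduce the problem to the $(N-2)$-connectivity of the classical complex of injective words on~$N$ letters, applied ultimately with $N = n-k$.

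First I would observe that each face map $d_i^p$ acts only on the right-hand side of a diagram --- it absorbs the $i$-th free right node into the box --- and therefore preserves the configuration of arcs on the left. This decomposes $C_n^{(k)}$ as a direct sum of subcomplexes, one for each of the $\binom{n}{2k}(2k-1)!!$ matchings $M$ of $2k$ of the $n$ left nodes, and by relabelling all these summands $C_n^{(k)}(M)$ are pairwise isomorphic as chain complexes. It therefore suffices to fix one matching $M_0$ and treat the subcomplex $C_n^{(k)}(M_0)$. Within this subcomplex, I would filter by the number $b$ of left nodes not involved in $M_0$ that are connected to the box. A case analysis of the three non-trivial types of face map shows that free-to-free faces preserve $b$, free-to-left faces increase $b$ by one, and free-to-box faces vanish; hence $b$ is non-decreasing along the differential, and the subspaces $F^j \subseteq C_n^{(k)}(M_0)$ spanned by diagrams with $b\geq j$ form a decreasing filtration by subcomplexes.

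On each associated graded $F^j/F^{j+1}$ only the free-to-free faces survive. Extracting the static combinatorial data --- the choice of the $j$-subset of (non-$M_0$) left nodes sent to the box, and a bijection from the remaining such left nodes onto certain free right positions --- reduces the graded piece to a tensor factor that is a \emph{pairing complex} recording the right-to-right arcs and free-to-box connections, subject to the constraint $r+c=k$. I expect that a further splitting or filtration will identify this residual pairing complex with (or map it quasi-isomorphically onto) a shift of the complex of injective words on $n-k$ effective letters, namely the $n-2k$ left nodes not involved in $M_0$ together with $k$ tokens representing the left-to-left arcs of $M_0$. The $(n-k-2)$-connectivity of this injective-words complex, combined with the degree shifts produced by the filtration, should propagate through the associated spectral sequence to yield the desired bound $H_i(C_n^{(k)})=0$ for $i\leq n-k-2$.

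The hard part will be the precise identification of the residual pairing complex and the careful bookkeeping of how its homological degrees shift with the filtration index $j$. Each graded piece $F^j/F^{j+1}$ is concentrated in a specific shifted range of degrees, and one must verify that these ranges fit together through the spectral sequence of the filtration to give exactly $n-k-2$, without any spurious low-degree homology surviving at the $E_\infty$ page. This delicate tracking is likely where the ``intricate diagrammatical argument'' mentioned in the introduction is concentrated, and an additional induction on $k$ (or a secondary filtration of the pairing complex itself) may well be required to organise the estimates.
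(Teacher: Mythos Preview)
Your opening decomposition of $C_n^{(k)}$ according to the left matching $M_0$ is correct and matches the paper's eventual splitting by the data $(X,P)$.  Your three--case analysis of the face maps is also right, and the filtration by $b$ (the number of non-$M_0$ left nodes connected to the box) is a genuine filtration by subcomplexes.  The gap is in what you claim happens on the associated graded.

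In your filtration quotient $F^{j}/F^{j+1}$ only the ``free-to-free'' faces survive.  But such a face map removes a paired free node and \emph{reindexes all the remaining free nodes}, so the bijection from the $n-2k-j$ left nodes onto ``certain free right positions'' is \emph{not} static; moreover the same face map \emph{changes} a pair-marker elsewhere in the word into a box-marker.  The residual ``pairing complex'' therefore has a differential that both deletes and alters letters, and it is not a complex of injective words on any alphabet --- in particular not on $n-k$ letters, since the $k$ left arcs of $M_0$ never interact with the right-hand side.  Concretely, your graded pieces are \emph{not} individually acyclic in the needed range: for $n=4$, $k=1$, $M_0=\{1,2\}$, $j=1$, $B=\{3\}$, the piece $F^{1}/F^{2}$ sits in degrees $1,2$ with $\partial(4\,|_p\,|_p)=\partial(|_p\,|_p\,4)=0$ and $\partial(|_p\,4\,|_p)=4\,|_b+|_b\,4$, so $H_1\cong R\neq 0$, whereas you need vanishing up to degree $n-k-2=1$.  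Any proof along your lines would therefore have to exhibit nontrivial cancellation among the higher differentials of the filtration spectral sequence, which you have not done.

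The paper avoids this by filtering the other way: by the number $j$ of right-to-right arcs among the \emph{free} (non-box) nodes.  In that filtration quotient it is the ``free-to-left'' faces that survive, so all of the right-to-right data --- both the positions and the pairing --- is frozen, and the residual complex is exactly the complex of injective words on $X$ (with $|X|=n-2k$) with $k+j$ inert \emph{separators}.  An induction on the number of separators reduces this to the classical complex of injective words and gives vanishing of $H_i$ for $i\le |X|-2$; after the degree shift by $k+j$ every filtration quotient is already acyclic in degrees $\le n-k-2+j\ge n-k-2$, so no spectral-sequence cancellation is required.  Swapping your filtration for this one makes the argument go through cleanly.
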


Assuming the above proposition we prove \autoref{thm:Brauer complex} as a consequence.

\begin{proof}[Proof of \autoref{thm:Brauer complex}]
    If $i\le\frac{n-3}{2}$ as in the statement of \autoref{thm:Brauer complex}, then
    \[
        i \le \left\lfloor \frac{n-3}2 \right\rfloor = n - \left\lfloor \frac n2 \right\rfloor -2 \le n - k -2
    \]
    for all $0\le k\le \lfloor\frac{n}{2}\rfloor$. 
    Then, by the decomposition of~$C_n$ into the~$C_n^{(k)}$,
    \[ H_i(C_n) = \bigoplus_{k= 0}^{\lfloor n/2\rfloor} H_i(C_n^{(k)})\]
    vanishes by \autoref{prop:Brauer complex level (k)}.
\end{proof}

We are left to prove \autoref{prop:Brauer complex level (k)}. 
Observe that if there are~$k$ left-to-left connections in a diagram, it follows that there are~$k$ right-to-right connections. However, on the right hand side of a diagram in~$(C_n)_p$ there is a $(n-(p+1))$-box, and so the right-to-right connections are split into two sets: singular nodes connected to the box and pairs of nodes connected to each other and not connected to the box. We exploit this in a filtration of~$(C_n^{(k)})_\ast$.

\begin{Def}\label{def-filtration}
For each $0\leq k \leq \lfloor n/2\rfloor$, we define a filtration 
\[
    F_0C_n^{(k)}
    \subseteq
    F_1C_n^{(k)}
    \subseteq
    \cdots
    \subseteq
    F_kC_n^{(k)}
\]
of $C_n^{(k)}$ as follows. 
The~$j$th level $F_jC_n^{(k)}$ is generated by diagrams with at most $j$ right-to-right connections that are \emph{not} connected to the box. Note that this is indeed a filtration, since the defining criterion is invariant under $\partial$. This is due to the observation that the boundary map can only decrease the number of right-to-right connections {not} connected to the box.
\end{Def}

\begin{prop}\label{prop:filtration quotient acyclicity}
The filtration quotient $F_jC_n^{(k)}/F_{j-1}C_n^{(k)}$ is highly acyclic: its homology vanishes in degrees $i\le n-k-2+j$.
\end{prop}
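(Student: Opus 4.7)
The first step is to make the basis and differential of $F_jC_n^{(k)}/F_{j-1}C_n^{(k)}$ fully explicit. A basis element in degree $p$ is a diagram with: (i) a pairing of $2k$ of the $n$ left nodes into $k$ left-to-left arcs, leaving $n-2k$ ``free'' left nodes; (ii) $j$ right-to-right arcs among the $p+1$ under-box positions; (iii) $k-j$ under-box positions connected to the box; and (iv) an injection of the remaining $p+1-j-k$ ``free'' under-box positions into the free left nodes. In the quotient $F_j/F_{j-1}$ the face map $d_i^p$ vanishes unless position $i$ is free: if $i$ is connected to the box, plugging creates a box-to-box connection which is zero by \autoref{prop:HomBr}; if $i$ is paired with another under-box position, plugging $i$ into the box converts that arc into a single box connection, dropping the number of right-to-right arcs not touching the box and hence landing in $F_{j-1}$.

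Next I would decompose $F_jC_n^{(k)}/F_{j-1}C_n^{(k)}$ as a direct sum indexed by the (finitely many, $p$-independent) choices of left-to-left pairing and abstract scaffold type---the latter consisting of $k-j$ labelled ``box slots'' together with a perfect matching of $2j$ ``arc slots,'' giving $s:=j+k$ scaffold slots in total. Because the differential is trivial on scaffold positions, each summand is isomorphic to a single chain complex $K(s,m)$ with $m:=n-2k$, whose degree-$p$ part has basis the pairs $(\iota,g)$ where $\iota\colon A\hookrightarrow\{0,\dots,p\}$ is an injection of the abstract scaffold set $A$ of size $s$ and $g$ is an injection of $\{0,\dots,p\}\setminus\iota(A)$ into a letter set $L$ of size $m$; the differential is $\sum_i(-1)^i d_i$ with $d_i$ the ``remove and renumber'' operation, set to zero when $i\in\iota(A)$. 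The proposition thus reduces to showing $K(s,m)$ is acyclic in degrees $\le m+s-2$.

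To prove this reduced claim I would identify $K(s,m)$ with the quotient of the classical complex of injective words on $L\sqcup A$ by the subcomplex of words whose image misses at least one element of $A$. The classical acyclicity of the injective-word complex in degrees $\le m+s-2$ supplies the ``total'' input, and the filtration of this complex by ``number of elements of $A$ hit'' has successive quotients that are direct sums of copies of $K(s',m)$ for $s'<s$, controlled by induction on $s$. The main obstacle is to secure the sharp acyclicity range $m+s-2$: a direct spectral-sequence argument suffers an off-by-one-or-two weakening because top-degree homology of the sub-quotient pieces is not forced to vanish. I expect to close this gap by combining the spectral sequence with an ``insert-a-free-letter'' contracting homotopy modelled on the classical one for injective words (given a distinguished $\ell\in L$, inserting $\ell$ at position $0$ and shifting), together with a secondary induction on $m$ using the short exact sequence $0\to K(s,m-1)\to K(s,m)\to K(s+1,m-1)\to 0$, which together force the relevant connecting homomorphisms to be surjective in the critical top degrees.
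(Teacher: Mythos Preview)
Your decomposition in the first two paragraphs is essentially the paper's: the paper also splits $F_jC_n^{(k)}/F_{j-1}C_n^{(k)}$ as a direct sum indexed by the left-hand data $(X,P)$ together with the combinatorial type of the right-to-right configuration. There is a small bookkeeping slip in your identification of the summand with $K(s,m)$: in a Brauer diagram the $k-j$ box-connected nodes are indistinguishable, and likewise the two ends of an arc and the $j$ arcs among themselves carry no labels, so an arbitrary injection $\iota\colon A\hookrightarrow\{0,\dots,p\}$ over-specifies the diagram by a factor of $|\mathrm{Aut}(A)|=(k-j)!\,2^j\,j!$. The paper handles this by recording instead the \emph{ordered} type $Y$ of the scaffold (which of the $s$ scaffold positions, in their induced order, are paired with which) as part of the index, and the remaining data as an injective word on $X$ with $s$ \emph{indistinguishable} separators; the resulting summand is a complex $W_X^{(s)}$, shifted up by $s$. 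Your $K(s,m)$ is exactly $\bigoplus_{\sigma\in\mathfrak{S}_s}W_X^{(s)}[s]$ (split by the order in which the elements of $A$ occur in the word), so acyclicity of $K(s,m)$ in degrees $\le m+s-2$ is equivalent to the paper's statement that $H_i(W_X^{(s)})=0$ for $i\le|X|-2$, and the slip is harmless for the conclusion.

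For the acyclicity itself your route genuinely differs. The paper proves $H_i(W_X^{(s)})=0$ for $i\le |X|-2$ by induction on $s$, filtering $W_X^{(s)}$ by the position of the \emph{first} separator; the level-$j$ filtration quotient is, after a degree-$(-j)$ isomorphism, a direct sum of copies of $W_{X'}^{(s-1)}$ with $|X'|=|X|-j$, and the degree shift exactly cancels the loss of letters, so every filtration quotient is acyclic in degrees $\le|X|-2$. Your proposal instead inducts on $m$. The key point you sketch is correct and is all you need: the prepend-$\ell$ map is a null-homotopy of the inclusion $K(s,m-1)\hookrightarrow K(s,m)$, so in the long exact sequence of $0\to K(s,m-1)\to K(s,m)\to K(s+1,m-1)\to 0$ the map $H_i(K(s,m-1))\to H_i(K(s,m))$ vanishes, giving an injection $H_i(K(s,m))\hookrightarrow H_i(K(s+1,m-1))$; by induction on $m$ (base case $m=0$ is immediate) the right-hand side vanishes for $i\le (m-1)+(s+1)-2=m+s-2$. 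This closes the gap cleanly, and in fact makes your quotient-of-injective-words presentation unnecessary. The paper's separator filtration is slightly more self-contained, while your argument stays closer to the classical injective-words contraction and avoids introducing the separator formalism.
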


The proof of this result will be given later.
Assuming it for the time being, we may prove \autoref{prop:Brauer complex level (k)}:

\begin{proof}[Proof of \autoref{prop:Brauer complex level (k)}]
    By \autoref{prop:filtration quotient acyclicity}, the homology of the filtration quotient $F_jC_n^{(k)}/F_{j-1}C_n^{(k)}$ vanishes in degrees $\ast\leq n-k-2$ for all~$j$.
    The same then holds for $C_n^{(k)}$ itself, either by considering 
    the spectral sequence associated to the filtration, or by considering the long exact sequences associated to the short exact sequences $0\to F_{j-1}C_n^{(k)}\to F_jC_n^{(k)}\to F_jC_n^{(k)}/F_{j-1}C_n^{(k)}\to 0$.
\end{proof}

\subsection{Injective words with separators}

Our goal is now to prove~\autoref{prop:filtration quotient acyclicity}.
We will do this at the end of this section by identifying the filtration quotients in terms of yet another family of complexes, which we introduce now.

\begin{Def}[Injective words with separators]
    Let $X$ be a finite set and let $k\geqslant 0$.
    An \emph{injective word on $X$ with $k$ separators} is a word with letters taken from the set $X\sqcup\{|\}$ consisting of $X$ and the \emph{separator} $|$, where each letter from $X$ appears at most once, and where the separator appears exactly $k$ times.
    When $k=0$, then these are simply the injective words on $X$.
\end{Def}

\begin{ex}
    For example, if $X=\{a\}$ and $k=2$, then the possible words are:
    \[
        ||
        \qquad\qquad
        ||a
        \qquad\qquad
        |a|
        \qquad\qquad
        a||
    \]
    And if $X=\{a,b\}$ and $k=1$, then the possible words are:
    \[
        |
        \qquad
        |a
        \qquad
        a|
        \qquad
        |b
        \qquad
        b|
        \qquad
        |ab
        \qquad
        a|b
        \qquad
        ab|
        \qquad
        |ba
        \qquad
        b|a
        \qquad
        ba|
    \]
\end{ex}

\begin{Def}[The complex of injective words with separators]
    Let $X$ be a finite set, let $s\geqslant 0$, and let $R$ be a commutative ring.
    The \emph{complex of injective words with $s$ separators} is the $R$-chain complex $W_X^{(s)}$ concentrated in degrees $-1\leq p \leq |X|-1$, and defined as follows.
    In degree $p$, $(W^{(s)}_X)_p$ has basis given by the injective words on $X$ with $s$ separators with $(p+1)$ letters from $X$. 
    Thus such a word ${\bf a} \in (W^{(s)}_X)_p$ has length~$s+p+1$. Let~$r=s+p$ and~${\bf a}=a_0a_1\cdots a_r$.
    The boundary operator $\partial^p\colon (W^{(s)}_X)_p\to (W^{(s)}_X)_{p-1}$ is defined by the rule
    \[
        \partial^p(a_0a_1\cdots a_r)=\sum_{i=0}^r(-1)^ia_0\cdots\widehat{a_i}\cdots a_r
    \]
    subject to the condition that if the omitted letter is a separator, then the corresponding term is omitted (or identified with $0$).
    In other words, the boundary is the signed sum of the words obtained by deleting the letters that come from $X$ and \emph{not deleting any separators}, but with signs determined by the position of the deleted letter among all letters \emph{including the separator}:
    \[
        \partial^p(a_0a_1\cdots a_r)=\sum_{a_i\in X}(-1)^ia_0\cdots\widehat{a_i}\cdots a_r
    \]
\end{Def}

\begin{ex}
    If we take $X=\{a,b\}$ and $s=1$, then the elements $a|b$ and $|ba$ both live in degree $1$, and their boundaries are
    \[
        \partial^1(a|b)
        = |b + a|
        \qquad\text{and}\qquad
        \partial^1(|ba) = - |a + |b.
    \]
\end{ex}

\begin{lem}
    $W^{(s)}_X$ is a chain complex.
    That is, $\partial^{p-1}\circ\partial^p=0$.
\end{lem}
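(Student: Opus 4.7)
The plan is to prove $\partial^{p-1} \circ \partial^p = 0$ by the standard sign-cancellation argument familiar from simplicial/semisimplicial homology, adapted carefully to account for the fact that positional indices are taken over all letters (including the un-deleted separators), while only the letters from $X$ are actually deleted. The key observation is that since separators are never removed, they behave as ``inert spacers'' that only shift the positional indices uniformly.

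More precisely, I would fix a word ${\bf a} = a_0 a_1 \cdots a_r \in (W^{(s)}_X)_p$ with $r = s+p$, and expand
\[
    \partial^{p-1}\partial^p({\bf a}) = \sum_{\substack{a_i \in X}} \sum_{\substack{a_j \in X \\ j \neq i}} \varepsilon_{i,j}\, a_0\cdots\widehat{a_i}\cdots\widehat{a_j}\cdots a_r,
\]
where $\varepsilon_{i,j}$ is the product of the two signs assigned by the two successive boundary operators. For each unordered pair $\{i,j\}$ with $i < j$ and $a_i, a_j \in X$, the resulting word (with both $a_i$ and $a_j$ deleted) appears twice. The first time it arises from deleting $a_i$ first (contributing sign $(-1)^i$) and then deleting $a_j$, which now sits in position $j-1$ of the shortened word (contributing sign $(-1)^{j-1}$). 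The second time, $a_j$ is deleted first (sign $(-1)^j$) and then $a_i$ remains in its original position $i$ (sign $(-1)^i$). The two signs are $(-1)^{i+j-1}$ and $(-1)^{i+j}$, which cancel.

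The only subtle point is checking that the presence of separators does not break this cancellation. This is where the convention that positions are counted over all letters becomes essential: removing a letter $a_i \in X$ shifts the position of every letter to its right down by one, regardless of whether those letters are separators or come from $X$. So the position of $a_j$ (for $j > i$) in the shortened word is $j-1$ exactly as in the classical case, and the induction on which separators lie between $a_i$ and $a_j$ is irrelevant. Conversely, when $a_j \in X$ is removed with $j > i$, the position of $a_i$ is unaffected. This is the whole content of the proof; there is no genuine obstacle, and the argument is essentially identical to the proof that the standard complex of injective words is a chain complex.
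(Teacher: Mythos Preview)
Your proposal is correct and follows essentially the same approach as the paper's proof: both argue that for $i<j$ with $a_i,a_j\in X$, the word $a_0\cdots\widehat{a_i}\cdots\widehat{a_j}\cdots a_r$ appears in $\partial^{p-1}\partial^p(a_0\cdots a_r)$ with total coefficient $(-1)^{i+(j-1)}+(-1)^{i+j}=0$. Your additional discussion of why separators do not disrupt the positional bookkeeping is helpful elaboration, but the core argument is identical.
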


\begin{proof}
    If $a_i,a_j\in X$ are distinct letters of a word $a_0\cdots a_r\in (W^{(s)}_X)_i$, with $i<j$, then $a_0\cdots \widehat a_i\cdots\widehat a_j\cdots a_r$ appears in $\partial^{p-1}\partial^p(a_0\cdots a_r)$ with coefficient \[(-1)^{i+(j-1)} + (-1)^{j+i}=0. \qedhere\]
\end{proof}

\begin{prop}\label{prop:Wnk highly acyclic}
$W_X^{(s)}$ is highly acyclic: $H_i(W_X^{(s)})=0$ for~$i\leq |X|-2$.
\end{prop}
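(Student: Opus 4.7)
The plan is to prove \autoref{prop:Wnk highly acyclic} by induction on the number $s$ of separators. The base case $s=0$ is the classical high acyclicity of the complex of injective words, $H_i(W_X^{(0)})=0$ for $i\le |X|-2$, originally due to Farmer and also discussed in~\cite{RW}. For the inductive step I would filter $W_X^{(s)}$ by the position of the leftmost separator: let $F_j\subseteq W_X^{(s)}$ be the subcomplex spanned by those words whose first separator appears at position at most $j$, with positions indexed from $0$. Because the boundary only deletes letters from $X$, the position of the first separator can only weakly decrease under $\partial$, so each $F_j$ is indeed a subcomplex, and $F_{|X|}=W_X^{(s)}$.

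The bottom piece $F_0$ consists of words of the form $|w$, where $w$ is an injective word on $X$ with $s-1$ separators; inspection of the boundary formula shows $F_0\cong W_X^{(s-1)}$ as chain complexes, up to an overall sign which does not affect homology. The inductive hypothesis then gives $H_i(F_0)=0$ for $i\le |X|-2$. For $j\ge 1$, the quotient $F_j/F_{j-1}$ is spanned by words of the form $y_1\cdots y_j | w$ with distinct $y_1,\dots,y_j\in X$ and $w$ an injective word on $X':=X\setminus\{y_1,\dots,y_j\}$ with $s-1$ separators. In this quotient only deletions of letters from $w$ survive, since any deletion from the prefix $y_1\cdots y_j$ lands in $F_{j-1}$. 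A direct check then identifies $F_j/F_{j-1}$ as a direct sum, indexed by the injective tuples $(y_1,\dots,y_j)$, of copies of $W_{X'}^{(s-1)}$ shifted upward in degree by $j$.

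Applying the inductive hypothesis to each summand, $H_q(W_{X'}^{(s-1)})=0$ for $q\le |X'|-2 = |X|-j-2$, which after the degree shift by $j$ translates to $H_p(F_j/F_{j-1})=0$ for $p\le |X|-2$. Finally, a secondary induction on $j$ via the long exact sequences associated to $0\to F_{j-1}\to F_j\to F_j/F_{j-1}\to 0$ then yields $H_p(F_j)=0$ for $p\le |X|-2$ and all $j$; taking $j=|X|$ gives the desired conclusion. The main obstacle will be the careful bookkeeping of signs and degree shifts needed to identify each filtration quotient as a shifted direct sum of the complexes $W_{X'}^{(s-1)}$, but once this identification is made the induction proceeds cleanly.
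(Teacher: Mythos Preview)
Your proposal is correct and follows essentially the same argument as the paper: induction on $s$ with the base case being Farmer's result, the same filtration of $W_X^{(s)}$ by the position of the first separator, and the same identification of the filtration quotients as (sign-twisted, degree-shifted) direct sums of complexes $W_{X'}^{(s-1)}$, concluded via the associated long exact sequences. The only cosmetic difference is that you treat $F_0$ separately whereas the paper handles all $j$ uniformly via a degree $-j$ chain isomorphism $\Psi_\ast$ with an explicit sign $(-1)^{(j+1)p}$.
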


\begin{proof}
We prove the statement by induction on $s$. 
The base case $s=0$ is the usual acyclicity statement for the complex of injective words, first proved by Farmer~\cite{Farmer}.

Let $s\ge 1$ and assume the result holds true for~$(s-1)$. 
We introduce a filtration $F_jW_X^{(s)}$ which at level~$j$ is generated by words $a_0\cdots a_r$ in which a separator appears among the letters $a_0,\ldots,a_j$, or equivalently, in at least one of the positions $0,\ldots,j$.
Then the filtration quotient $F_jW_X^{(s)}/F_{j-1}W_X^{(s)}$ is the chain complex defined like $W_X^{(s)}$, but with basis consisting of words in which the first separator appears in position $j$, and whose differential omits letters taken from $X$ that appear \emph{after} position $j$.  

Next we introduce a degree $-j$ chain map
\[
    \Psi_\ast\colon F_j W_X^{(s)}/F_{j-1} W_X^{(s)}
    \longrightarrow
    \bigoplus_{\mathbf{x}} W^{(s-1)}_{X-\mathbf{x}}
\]
where $\mathbf{x}$ ranges over all injective words on $X$ with exactly $j$ letters (and no separators), and $X-\mathbf{x}$ denotes the set obtained by deleting the letters of $\mathbf{x}$ from $X$.
Observe that if  $a_0\cdots a_r$ is a basis element of  $(F_j W_X^{(s)})_p / (F_{j-1} W_X^{(s)})_p$ then $a_0\cdots a_r = (a_0\cdots a_{j-1}) | (a_{j+1}\cdots a_r)$ where $a_0\cdots a_{j-1}$ is an injective word on $X$ with no separators, and $a_{j+1}\cdots a_r$ is an injective word of degree $p-j$ on $X-\mathbf{x}$ with $s-1$ separators.
Then $\Psi_\ast$ is defined by 
\[
    \Psi_p(a_0\cdots a_r) = (-1)^{(j+1)p}a_{j+1}\cdots a_r \in W^{(s-1)}_{X-(a_0\cdots a_{j-1})}.
\]
It is straightforward to see that $\Psi_\ast$ is both a chain map and an isomorphism.

Since the homology of each $W^{(s-1)}_{X-\mathbf{x}}$ vanishes in degrees $\ast\leq |X-\mathbf{x}|-2=|X|-j-2$, it follows that the homology of $F_j W_X^{(s)}/F_{j-1} W_X^{(s)}$ vanishes in the same range.
The same therefore holds for $W_X^{(s)}$ itself, either by considering the spectral sequence associated to the filtration, or by considering the long exact sequences associated to the short exact sequences $0\to F_{j-1}W_X^{(s)}\to F_jW_X^{(s)}\to F_jW_X^{(s)}/F_{j-1}W_X^{(s)}\to 0$.
\end{proof}

\subsection{Identifying the filtration quotients}

Recall from \autoref{def-decomposition} that $C_n^{(k)}\subseteq C_n$ is the subcomplex spanned by diagrams that have precisely $k$ left-to-left connections,
and from \autoref{def-filtration} that $F_jC_n^{(k)}\subseteq C_n^{(k)}$ is the subcomplex spanned by diagrams that have at most $j$ right-to-right connections that are not connected to the box.
Our aim now is to prove \autoref{prop:filtration quotient acyclicity}, which states that the filtration quotients  $F_jC_n^{(k)}/F_{j-1}C_n^{(k)}$ are highly acyclic.
We will do this by identifying the filtration quotients in terms of complexes of injective words with separators.

To begin, observe that the filtration quotient $F_jC_n^{(k)}/F_{j-1}C_n^{(k)}$ has basis in degree~$p$ consisting of diagrams which have an~$(n-(p+1))$-box on the right,~$k$ left-to-left connections and~$k$ right-to-right connections, of which exactly $j$ are not connected to the box. 
In more detail, such diagrams have the following properties:
\begin{itemize}
    \item
    On the left, the diagram has $n$ nodes.
    \item
    There are precisely $k$ left-to-left connections.
    \item
    On the right, the diagram has an $(n-(p+1))$-box, together with $(p+1)$ nodes.
    \item
    There are exactly $j$ right-to-right connections that do not connect to the box, and a further $k-j$ right-to-right connections that connect a node to the box.
    (There are $k$ left-to-left connections, so there are also $k$ right-to-right connections.)
    \item
    Thus $(k-j)+2j = k+j$ of the right-hand nodes are part of right-to-right connections.
    \item
    The remaining $(p+1)-(k+j)$ right-hand nodes are therefore connected to nodes on the left.
\end{itemize}

\begin{ex}\label{ex:quotient diagram}
Here is an example of a diagram in $F_jC_n^{(k)}/F_{j-1}C_n^{(k)}$ for $n=8$, $k=2$ and $j=1$, lying in degree $p=5$.
\[
    \begin{tikzpicture}[x=1.5cm,y=-.5cm,baseline=-2.05cm]
    
        \node[v] (a1) at (0,0) {};
        \node[v] (a2) at (0,1) {};
        \node[v] (a3) at (0,2) {};
        \node[v] (a4) at (0,3) {};
        \node[v] (a5) at (0,4) {};
        \node[v] (a6) at (0,5) {};
        \node[v] (a7) at (0,6) {};
        \node[v] (a8) at (0,7) {};
        
        \node[b] (b1) at (2,0) {$2$};
        \node[v] (b3) at (2,2) {};
        \node[v] (b4) at (2,3) {};
        \node[v] (b5) at (2,4) {};
        \node[v] (b6) at (2,5) {};
        \node[v] (b7) at (2,6) {};
        \node[v] (b8) at (2,7) {};
        
        \draw[e] (a1) to[out=0, in=180] (b4);
        \draw[e] (a4) to[out=0, in=180] (b8);
        \draw[e] (a6) to[out=0, in=180] (b1);
        \draw[e] (a7) to[out=0, in=180] (b5);
        \draw[e] (a2) to[out=0, in=0]   (a3);
        \draw[e] (b1) to[out=180, in=180] (b6);
        \draw[e] (a5) to[out=0, in=0] (a8);
        %\draw[e] (b3) to[out=180,in=180] (b7);
        \draw[e] (b3) to[out=180,in=90](1.7,4) to[out=270,in=180] (b7);
    
    \end{tikzpicture}
\]
Observe the $k=2$ left-to-left connections, the $k=2$ right-to-right connections, the $j=1$ right-to-right connection that does not involve the box, the $k+j=3$ nodes involved in right-to-right connections, and the $(p+1)-(k+j)=3$ nodes on the right that are connected to nodes on the left.
\end{ex}

\begin{Def}\label{def:4-tuple}
A diagram in the basis of $(F_jC_n^{(k)}/F_{j-1}C_n^{(k)})_p$ determines a tuple
\[
    (X,P,Y, \mathbf{a})
\]
consisting of the following data:
\begin{itemize}
    \item
    A subset $X\subseteq[n]$ of cardinality $n-2k$.
    \item
    A partition $P$ of $[n]-X$ into $k$ disjoint pairs.
    \item
    An ordered set $Y$ of cardinality $k+j$, with $2j$ of its elements divided into disjoint pairs. This is regarded as an isomorphism class of such ordered sets with pairing.
    \item
    An injective word with $k+j$ separators $\mathbf{a}$, of degree $p-(k+j)$, on the set $X$.
\end{itemize}
The diagram determines the tuple as follows:
\begin{itemize}
    \item
    $X\subseteq[n]$ is the set of nodes on the left that are part of left-to-right connections.
    \item
    $P$ is the partition of $[n]-X$ determined by the left-to-left connections.
    \item
    $Y$ is the set of nodes on the right that are part of right-to-right connections, either to other nodes or to the box, with the pairing obtained from the right-to-right connections that do not involve the box.
    \item
    $\mathbf{a}$ is the injective word of length $p+1$ obtained as follows.
    If the $i$-th node on the right is part of a right-to-right connection, then the $i$-th letter of $\mathbf{a}$ is a separator.  
    If the $i$-th node on the right is part of a left-to-right connection, then the $i$-th letter of $\mathbf{a}$ is the element of $X$ at the left-hand end of that connection.
\end{itemize}    
\end{Def}

\begin{ex}\label{ex:diagram tuple}
Continuing from \autoref{ex:quotient diagram}, the tuple $(X,P,Y,\mathbf{a})$ associated to our previous diagram is depicted below: 
\[
    \begin{tikzpicture}[x=1.5cm,y=-.5cm,baseline=-2.05cm]
    
        \node[v] (a1) at (0,0) {};
        \node[v] (a2) at (0,1) {};
        \node[v] (a3) at (0,2) {};
        \node[v] (a4) at (0,3) {};
        \node[v] (a5) at (0,4) {};
        \node[v] (a6) at (0,5) {};
        \node[v] (a7) at (0,6) {};
        \node[v] (a8) at (0,7) {};
        
        \node[b] (b1) at (2,0) {$2$};
        \node[v] (b3) at (2,2) {};
        \node[v] (b4) at (2,3) {};
        \node[v] (b5) at (2,4) {};
        \node[v] (b6) at (2,5) {};
        \node[v] (b7) at (2,6) {};
        \node[v] (b8) at (2,7) {};
        
        \draw[e] (a1) to[out=0, in=180] (b4);
        \draw[e] (a4) to[out=0, in=180] (b8);
        \draw[e] (a6) to[out=0, in=180] (b1);
        \draw[e] (a7) to[out=0, in=180] (b5);
        \draw[e] (a2) to[out=0, in=0]   (a3);
        \draw[e] (b1) to[out=180, in=180] (b6);
        \draw[e] (a5) to[out=0, in=0] (a8);
        %\draw[e] (b3) to[out=180,in=180] (b7);
        \draw[e] (b3) to[out=180,in=90](1.7,4) to[out=270,in=180] (b7);
        
        \node[v, red] at (-2,0) {};
        \node[left,red] at (-2,0){$1$};
        \node[v,red] at (-2,3) {};
        \node[left,red] at (-2,3){$4$};
        \node[v,red] at (-2,5) {};
        \node[left,red] at (-2,5){$6$};
        \node[v,red] at (-2,6) {};
        \node[left,red] at (-2,6){$7$};
        
        \node[v,blue] at (-1,1) {};
        \node[left,blue] at (-1,1){$2$};
        \node[v,blue] at (-1,2){};
        \node[left,blue] at (-1,2){$3$};
        \node[v,blue] at (-1,4){};
        \node[left,blue] at (-1,4){$5$};
        \node[v,blue] at (-1,7){};
        \node[left,blue] at (-1,7){$8$};
        \draw[e,blue] (-1,1) to[out=0, in=0] (-1,2);
        \draw[e,blue] (-1,4) to[out=0, in=0] (-1,7);
        
        \node[v,olive] at (3,2) {};
        \node[v,olive] at (3,5) {};
        \node[v,olive] at (3,6) {};
        \draw[e,olive] (3,2) to[out=180,in=90] (2.7,4) to[out=270,in=180] (3,6);
    
        \node[v,purple] at (4,3) {};
        \node[right,purple] at (4,3) {$1$};
        \node[v,purple] at (4,4) {};
        \node[right,purple] at (4,4) {$7$};
        \node[v,purple] at (4,7) {};
        \node[right,purple] at (4,7) {$4$};
        \draw[e,purple] (3.9,2) to (4.1,2);
        \draw[e,purple] (3.9,5) to (4.1,5);
        \draw[e,purple] (3.9,6) to (4.1,6);
        
        \node[red] at (-2,9) {$X$};
        \node[blue] at (-1,9) {$P$};
        \node[olive] at (3,9) {$Y$};
        \node[purple] at (4,9) {$\mathbf{a}=|17||4$};
    \end{tikzpicture}
\]
Thus $X$ consists of all left-hand nodes that are part of left-to-right connections, and $P$ is the pairing obtained from the left-to-left connections on the remaining left-hand nodes. 
We have equipped $X$ and $P$ with the labelling obtained by labelling the nodes from top to bottom, in order to indicate that $X$ is a subset of $[n]=[8]$ and that $P$ is a pairing on the complement of $X$.
Next, $Y$ consists of the nodes that are part of right-to-right connections, equipped with the pairing obtained from the right-to-right connections that do not involve the box.
We have not labelled $Y$, in order to indicate that it is only the isomorphism type of $Y$, as an ordered set with pairing, which is recorded.
Finally, $\mathbf{a}$ is drawn by writing the ends of right-to-right connections as a horizontal bar (separator), together with the nodes that are the right-hand ends of left-to-right connections, labelled by the element at the left-hand end of the relevant connection.
Rotating this by $90$ degrees gives us the word $\mathbf{a}$, in this case $\mathbf{a}=|17||4$.
Note that the letter $6\in X$ does not appear in $\mathbf{a}$.  This is because in the original diagram, node $6$ on the left is connected to the box.

Finally, observe that we can completely rebuild the original diagram from the tuple $(X,P,Y,\mathbf{a})$.
First, $P$ allows us to recover all left-to-left connections.
Next, matching $Y$ to the separators of $\mathbf{a}$ allows us to recover the nodes on the right, together with the right-to-right connections between the nodes.
Finally, right-hand nodes that are labelled by an element of $X$ are then equipped with a left-to-right connection to that element,
and any remaining right-hand nodes are then connected to the box.
\end{ex}

\begin{Def}
    The discussion above allows us to define a map
    \[
        \Phi_\ast
        \colon
        F_jC_n^{(k)}/F_{j-1}C_n^{(k)}
        \longrightarrow
        \bigoplus_{(X,P,Y)} W^{(k+j)}_X
    \] 
    of degree $-(k+j)$, where the direct sum is indexed by all tuples $(X,P,Y)$ of the form considered in \autoref{def:4-tuple}.
    The map $\Phi_p$ is defined as follows:
    Take a diagram $D$ in the basis of   $(F_jC_n^{(k)}/F_{j-1}C_n^{(k)})_p$, determine the associated tuple $(X,P,Y,\mathbf{a})$ as in \autoref{def:4-tuple}, and then define $\Phi_\ast(D)$ to be $\mathbf{a}$ in the summand $W^{(k+j)}_X$ corresponding to $(X,P,Y)$.
\end{Def}

\begin{lem}
    $\Phi_\ast$ is a chain map.
\end{lem}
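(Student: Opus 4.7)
The plan is to check $\Phi_{p-1} \circ \partial^p = \partial \circ \Phi_p$ by evaluating both sides on a basis diagram $D \in (F_jC_n^{(k)}/F_{j-1}C_n^{(k)})_p$ with associated tuple $(X, P, Y, \mathbf{a})$, writing $\mathbf{a} = a_0 \cdots a_p$. The preparatory observation is that $d_i^p$ has a clean geometric interpretation on the diagram: right-multiplication by the permutation $S_{n-p+i-1}\cdots S_{n-p}$ moves the $i$-th node under the box (in top-to-bottom numbering, matching the positions in $\mathbf{a}$) to the top position, whereupon the tensor relation absorbs that top node into the box. Thus the net effect of $d_i^p$ is simply to absorb the $i$-th node under the box into the box.

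From here I would run a case analysis on the $i$-th letter of $\mathbf{a}$. If $a_i$ is a separator coming from a box-connected member of $Y$, then absorbing it creates a connection with both ends at the box, so the diagram is killed by the second condition of \autoref{prop:HomBr}. If $a_i$ is a separator coming from a non-box right-to-right pair, then the partner node becomes box-connected, dropping the count of non-box right-to-right connections from $j$ to $j-1$, so the image lies in $F_{j-1}$ and dies in the quotient. The only surviving case is $a_i = \ell \in X$: the left-node $\ell$ (previously connected to the $i$-th right-node) becomes box-connected but remains in $X$ because $X$ by definition contains \emph{all} left-nodes outside left-to-left pairs; hence the triple $(X,P,Y)$ is unchanged and the new word is $a_0 \cdots \widehat{a_i} \cdots a_p$.

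Summing with the alternating signs of $\partial^p = \sum_i (-1)^i d_i^p$ then yields
\[
\Phi_{p-1}(\partial^p D) \;=\; \sum_{i:\, a_i \in X}(-1)^i\, a_0 \cdots \widehat{a_i} \cdots a_p,
\]
which is exactly $\partial \mathbf{a} = \partial \Phi_p(D)$ by the defining formula for the boundary on $W_X^{(k+j)}$. The step that requires the most care is verifying that in the surviving case the set $X$ is genuinely unchanged; this rests on the convention that $X$ records all left-nodes outside left-to-left pairs, not only those connected to non-box right-nodes. No extra sign factor arises because $\Phi$ preserves the positions of all letters in the word, so the alternating signs of $\partial^p$ on the diagram side match those of $\partial$ on the word side index by index.
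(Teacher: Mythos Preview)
Your proof is correct and follows essentially the same approach as the paper's own argument: both perform a case analysis on the type of connection at the $i$-th right-hand node, observe that the two separator cases vanish (one by the box-loop relation, one by dropping into $F_{j-1}$), and then identify the surviving case with deletion of the $i$-th letter of $\mathbf{a}$. Your explicit attention to why $X$ is unchanged and why the signs align position-by-position is a nice touch, but it does not deviate from the paper's method.
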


\begin{proof}
    We suggest that the reader keep in mind the diagram from \autoref{ex:diagram tuple} throughout this proof.
    
    First, we claim that the triple $(X,P,Y)$ associated, via~$\Phi_\ast$,  to a basis diagram $D$ in $(F_jC_n^{(k)}/F_{j-1}C_n^{(k)})_p$ is preserved in all diagrams appearing in the boundary of $D$.
    Recall from \autoref{def:chain complex} that the boundary map $\partial^p$ sends a diagram to the alternating sum of the diagrams obtained as follows: work through the nodes on the right of the diagram, and in each case move the node into the box.
    This clearly does not change the left-hand end of the diagram, and therefore all of the diagrams in the boundary have the same $X$ and $P$ associated to them.
    If the node that is moved into the box was part of a right-to-right connection with another node, then the resulting diagram has fewer right-to-right connections between nodes, and therefore vanishes in the quotient 
    $(F_jC_n^{(k)}/F_{j-1}C_n^{(k)})_p$.
    If the node that is moved into the box was part of a right-to-right connection with the box, then after moving it into the box, the resulting diagram has a loop at the box, and therefore again vanishes.
    The last two sentences show that, under the boundary map, the nodes that are part of the right-to-right connections do not change, and neither does the data of which are connected to the box and which are not.
    In other words, the diagrams in the boundary have the same $Y$ associated to them.
    
    The above paragraph demonstrates that $F_jC_n^{(k)}/F_{j-1}C_n^{(k)}$ splits as a direct sum indexed by the triples $(X,P,Y)$.
    It now suffices to show that the assignment that sends a diagram with fixed $(X,P,Y)$ to the corresponding injective word with separators $\mathbf{a}$ respects the boundary map.
    But this is clear: moving the end of a left-to-right connection into the box corresponds exactly to deleting one of the non-separator letters from $\mathbf{a}$.
\end{proof}

\begin{lem}\label{lem:Phi iso}
    $\Phi_\ast$ is an isomorphism.
\end{lem}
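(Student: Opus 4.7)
Since $\Phi_\ast$ has already been shown to be a chain map, it suffices to verify that it is an $R$-module isomorphism in each degree, which I will do by exhibiting an inverse on the level of bases. In degree $p$, the domain has as $R$-basis the diagrams described at the start of this subsection, while the corresponding graded piece of the shifted target $\bigoplus_{(X,P,Y)} W^{(k+j)}_X$ has as $R$-basis the tuples $(X,P,Y,\mathbf{a})$ with $\mathbf{a} \in (W^{(k+j)}_X)_{p-(k+j)}$. The map $\Phi_\ast$ sends a diagram to such a tuple as prescribed in \autoref{def:4-tuple}, so I only need to describe the inverse recipe explicitly.

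Given a tuple $(X,P,Y,\mathbf{a})$, build a diagram as follows. Draw $n$ nodes on the left, insert the $k$ left-to-left connections specified by $P$ on the $2k$ nodes in $[n]-X$, and draw an $(n-(p+1))$-box above $p+1$ right-hand nodes, the latter labelled from top to bottom by the $p+1$ positions of $\mathbf{a}$. For each non-separator letter $x\in X$ occurring at position $i$ of $\mathbf{a}$, add a left-to-right connection from left-node $x$ to the $i$-th right-hand node. The $k+j$ separator positions of $\mathbf{a}$, in order, are identified with the ordered set $Y$; each of the $j$ pairs in $Y$ becomes a right-to-right connection between the corresponding two right-hand nodes, while each of the $k-j$ unpaired elements becomes a connection from the corresponding right-hand node to the box. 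Finally, each element of $X$ not occurring as a letter of $\mathbf{a}$ is joined to the box by a left-to-right connection, completing the diagram.

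A direct count shows this yields a valid basis element of $(F_jC_n^{(k)}/F_{j-1}C_n^{(k)})_p$: the $p+1-(k+j)$ non-separator letters of $\mathbf{a}$ account for left-to-right connections to right-hand nodes, the remaining $n-2k-(p+1-(k+j)) = n-k+j-p-1$ elements of $X$ connect to the box, and together with the $k-j$ separator-to-box connections this fills exactly the $n-p-1$ slots of the box. The resulting diagram has $k$ left-to-left connections, $j$ right-to-right connections not involving the box, and $k-j$ right-to-right connections to the box, precisely matching the characterisation of basis diagrams in the filtration quotient given at the start of the subsection. By construction the two assignments are mutually inverse on bases, so $\Phi_\ast$ is an isomorphism. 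The only point requiring care is the treatment of $Y$ as an isomorphism class of ordered sets with pairing, but since the order is intrinsic (by separator position) and the pairing is intrinsic (an unordered choice of $j$ pairs among the $k+j$ separator positions), no ambiguity arises; I expect this small bookkeeping to be the only obstacle.
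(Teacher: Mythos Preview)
Your proposal is correct and follows essentially the same route as the paper: both proofs exhibit an inverse to $\Phi_\ast$ on bases by reconstructing the diagram from the tuple $(X,P,Y,\mathbf{a})$ in exactly the way you describe. If anything, your version is slightly more explicit, since you spell out the box-filling count and the handling of elements of $X$ not appearing in $\mathbf{a}$, points the paper leaves implicit.
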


\begin{proof}
    We work in degree $p$.  
    The domain of $\Phi_\ast$ has a basis consisting of the diagrams in degree $p$, while the range has basis consisting of all $4$-tuples $(X,P,Y,\mathbf{a})$ of the kind appearing in \autoref{def:4-tuple}, and the effect of $\Phi_\ast$ is to send a diagram to the associated tuple.
    It therefore suffices to show that the assignment that sends a basis diagram to a $4$-tuple is a bijection.
    
    To see this, we construct an inverse assignment as follows.
    We again recommend that the reader keeps the diagram of \autoref{ex:diagram tuple} in mind at this point.
    Given a tuple $(X,P,Y,\mathbf{a})$, we build a basis diagram, as follows.
    First, $P$ allows us to recover all left-to-left connections on $[n]$.
    Next, we use the letters of $\mathbf{a}$ --- including the separators --- as the right-hand nodes of our diagram.
    We then replace the separators, in order, with the elements of $Y$ (which is linearly ordered), adding connections according to the pairing on $Y$, and connecting any unpaired elements of $Y$ to the box.
    The unconnected nodes on the right are then the non-separators of $\mathbf{a}$, which are in fact distinct elements of $X$; adding the corresponding left-to-right connections then completes the diagram.
    
    The assignment described in the last paragraph is indeed an inverse to the one described in the first paragraph, and this completes the proof.
\end{proof}

Using the isomorphism~$\Phi_\ast$, we can subsequently prove~\autoref{prop:filtration quotient acyclicity}.

\begin{proof}[Proof of \autoref{prop:filtration quotient acyclicity}]

We wish to show that the homology of $F_jC_n^{(k)}/F_{j-1}C^{(k)}_n$ vanishes in degrees $i\leq n-k-2+j$.
By \autoref{lem:Phi iso}, there is a chain isomporphism of degree $-(k+j)$ between this chain complex and $\bigoplus_{(X,P,Y)}W_X^{(k+j)}$, where in this direct sum the sets $X$ all have cardinality $n-2k$.
By \autoref{prop:Wnk highly acyclic}, the homology of the $W_X^{(k+j)}$ all vanish in degrees $i\leq |X|-2 = n-2k-2$, so that the homology of $F_jC_n^{(k)}/F_{j-1}C^{(k)}_n$ vanishes in degrees $i\leq n-2k-2 + (k+j) = n-k-2+j$. 
\end{proof}

\section{Proof of \autoref{thma}}\label{Section: MainTheorem}

In this section, we prove \autoref{thma}. Recall that this states that the inclusion $\iota: R\mathfrak S_n \hookrightarrow \Br_n$ induces maps
\[ H_i(\mathfrak S_n;\t) \longrightarrow \Tor^{\Br_n}_i(\t,\t) \]
that are isomorphisms for $n\ge 2i+1$.

Recall the inclusion and projection maps
\[
    R\fS_n\xrightarrow{\iota}\Br_n\xrightarrow{\pi} R\fS_n,
\]
which satisfy $\pi\circ\iota=\mathrm{id}$.
These induce maps of $\Tor$-groups
\[
    \Tor^{R\fS_n}(\t,\t)\xrightarrow{\iota_\ast}\Tor^{\Br_n}(\t,\t)\xrightarrow{\pi_\ast} \Tor^{R\fS_n}(\t,\t)
\]
that again satisfy $\pi_\ast\circ\iota_\ast=\mathrm{id}$, so that 
$\iota_\ast$ is injective.
It is therefore enough to show that $\iota_\ast$ is surjective.

Recall the chain complex~$C_n=(C_n)_\ast$ from \autoref{def:chain complex}, and recall this complex is non-zero only when~$-1\leq \ast \leq n-1$. Recall from \autoref{thm:Brauer complex} that $H_i(C_n)=0$ for~$i\leq \frac{n-3}{2}$.

Consider the two spectral sequences associated to the double complex~$P_\ast \otimes_{\Br_n} (C_n)_\ast$, for~$P_\ast$ a projective resolution of~$\Br_n$ over~$\t$. The first spectral sequence converges to zero in a range, since each~$P_p$ is projective, and therefore flat, so
\[
E^1_{p,q}=H_q(P_p\otimes_{\Br_n} (C_n)_\ast)\cong P_p\otimes_{\Br_n} H_q((C_n)_\ast)=0 \mbox{ if } q\leq \frac{n-3}{2}.
\]
Therefore the second spectral sequence will also converge to zero in the range $p+q \le \frac{n-3}2$. The~$E^1$-page of this spectral sequence can be identified as follows
\begin{eqnarray*}
E^1_{p,q}&=&\Tor_q^{\Br_n}(\t, (C_n)_p)\\
&=&\Tor_q^{\Br_n}(\t, \Br_n\otimes_{\Br_{n-p-1}}\t)\\&\cong&
\begin{cases}
\Tor_q^{\Br_n}(\t,\t)& p=-1\\ H_q(\fS_{n-p-1};\t) & p\geq 0. \end{cases}
\end{eqnarray*}
where the final line uses \autoref{prop-tor-quotient-Homology-Sm}.
Our aim now is to identify the differentials, but we include a theoretical Lemma first, to help us do this.

\begin{lem}\label{lem-induced-identity}
    Let $G$ be a group, $H$ a subgroup of $G$, and $g\in G$ an element that commutes with $H$.
    Right-multiplication by $g$ induces a map $RG\otimes_{RH}\t\to RG\otimes_{RH}\t$.
    The induced map on $\Tor$ groups
    \[
        \Tor^{RG}_\ast(\t,RG\otimes_{RH}\t)
        \longrightarrow
        \Tor^{RG}_\ast(\t,RG\otimes_{RH}\t).
    \]
    is the identity map.
\end{lem}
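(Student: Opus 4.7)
The plan is to prove this via the same mechanism that underlies Shapiro's lemma, reducing the statement to the assertion that right multiplication by $g$ on a projective resolution is chain homotopic to the identity.

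Concretely, I would begin by choosing a projective right $RG$-resolution $P_\ast$ of $\t$. Then $\Tor_\ast^{RG}(\t,RG\otimes_{RH}\t)$ is computed by the chain complex $P_\ast\otimes_{RG}(RG\otimes_{RH}\t)$, and the natural isomorphism $P_\ast\otimes_{RG}(RG\otimes_{RH}\t)\to P_\ast\otimes_{RH}\t$ given by $p\otimes(a\otimes r)\mapsto pa\otimes r$ (the identification used in Shapiro's lemma) turns the map induced by right multiplication by $g$ into the map
\[
    \rho_g\otimes\id\colon P_\ast\otimes_{RH}\t\longrightarrow P_\ast\otimes_{RH}\t,\qquad p\otimes r\longmapsto pg\otimes r,
\]
where $\rho_g\colon P_\ast\to P_\ast$ denotes right multiplication by $g$. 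Thus it suffices to show that $\rho_g$ is chain homotopic to the identity as an endomorphism of $P_\ast$ viewed as a chain complex of right $RH$-modules.

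The key observation is that $P_\ast$ is itself a projective $RH$-resolution of $\t$: since $RG$ is free as a right $RH$-module on any set of left coset representatives, every projective right $RG$-module is projective over $RH$. Because $g$ commutes with $H$, the map $\rho_g$ respects the right $RH$-action, as $(ph)g=phg=pgh=(pg)h$, and it lies over the identity on $\t$ (because $g$ acts trivially on $\t$). The fundamental uniqueness property of projective resolutions then provides a chain homotopy between $\rho_g$ and $\id_{P_\ast}$ as maps of chain complexes of right $RH$-modules. Tensoring this chain homotopy with $\t$ over $RH$ gives the desired chain homotopy between $\rho_g\otimes\id$ and $\id_{P_\ast\otimes_{RH}\t}$, from which the claim follows on passing to homology.

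The only real subtlety is bookkeeping with left/right module conventions, and making sure the centrality hypothesis $gh=hg$ is invoked precisely where needed --- namely, to upgrade $\rho_g$ from a map of abelian groups to a chain map of right $RH$-modules, at which point the standard uniqueness of chain maps between projective resolutions does all the remaining work.
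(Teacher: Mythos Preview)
Your proposal is correct and follows essentially the same approach as the paper: identify $P_\ast\otimes_{RG}(RG\otimes_{RH}\t)$ with $P_\ast\otimes_{RH}\t$, observe that $P_\ast$ is a projective $RH$-resolution of $\t$ on which $\rho_g$ is an $RH$-chain map over the identity, and conclude via uniqueness up to homotopy. If anything, you supply slightly more detail (e.g.\ why projectivity descends along $RH\hookrightarrow RG$, and the explicit check that $\rho_g$ is $RH$-linear) than the paper does.
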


\begin{proof}
    Let $P_\ast$ be a projective $RG$-resolution of $\t$.
    Then it is also a projective $RH$-resolution of $\t$.
    Since $g$ commutes with $RH$, the map $P_\ast\to P_\ast$, $p\mapsto pg$ is a map of projective $RH$-resolutions, over the identity map on $\t$.
    It is therefore chain-homotopic to the identity map of $P_\ast$.
    
    $\Tor^{RG}_\ast(\t,RG\otimes_{RH}\t)$ is the homology of $P_\ast\otimes_{RG}(RG\otimes_{RH}\t)$, which we identify with $P_\ast\otimes_{RH}\t$, and then the map in question is represented by the chain map $P_\ast\otimes_{RH}\t\to P_\ast\otimes_{RH}\t$, $p\otimes r\mapsto pg\otimes r$.
    This is chain-homotopic to the identity, by the first paragraph.
\end{proof}

We now return to considering the differentials. Recall the map~$\sigma$ from the beginning of \autoref{Section:Shapiro}, which is an isomorphism by Shapiro's Lemma. Recall also that the map~$\iota_
\ast\colon\Tor^{R\fS_n}_\ast(\t,R\fS_n\otimes_{R\fS_m}\t)\to \Tor^{\Br_n}_\ast(\t,\Br_n\otimes_{\Br_m}\t)$ induced by the inclusion~$\iota$ is an isomorphism by \autoref{prop-tor-quotient-Homology-Sm}. 

\begin{lem}\label{lemma-d-one}
    Under the isomorphisms  
    \[ 
        \iota_\ast\circ\sigma\colon  
        H_\ast(\fS_{n-p-1})
        \xrightarrow{\ \cong\ } 
        \Tor_q^{\Br_n}(\t,\Br_n\otimes_{\Br_{n-p-1}}\t)
    \]
    for $p\geq 0$, the $d^1$-differentials in the above spectral sequence are given as follows:
    \[
        d^1\colon E^1_{p+1,q}\to E^1_{p,q}
        =\begin{cases} 
            0 
            & 
            p\geq 0 \text{ even}
            \\
            s_\ast\colon H_q(\fS_{n-p-2})\to H_q(\fS_{n-p-1})
            & 
            p\geq 0 \text{ odd}
            \\
            s_\ast\colon H_q(\fS_{n-1})\to \Tor_q^{\Br_n}(\t,\t) & p=-1
        \end{cases}
    \]
    where $s\colon \fS_{n-p-2}\to\fS_{n-p-1}$ and $s\colon R\fS_{n-1}\to\Br_n$ are the inclusion maps.
\end{lem}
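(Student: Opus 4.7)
The plan is to compute $\partial^{p+1}_\ast$ summand by summand, using the formula $\partial^{p+1}=\sum_{i=0}^{p+1}(-1)^i d^{p+1}_i$. The key claim is that every face map $d^{p+1}_i$ induces the \emph{same} map on $\Tor$-groups, namely the one induced by $d^{p+1}_0$. If this holds, then $\partial^{p+1}_\ast$ is either $0$ or $(d^{p+1}_0)_\ast$ depending on the parity of the number of summands $p+2$, matching the even/odd case split in the statement. The face $d^{p+1}_0$ is the natural projection $\Br_n\otimes_{\Br_{n-p-2}}\t\to\Br_n\otimes_{\Br_{n-p-1}}\t$ extending the box by one node; by naturality of $\iota_\ast\circ\sigma$, it is compatible with the inclusion $s\colon\fS_{n-p-2}\hookrightarrow\fS_{n-p-1}$, so $(d^{p+1}_0)_\ast$ becomes $s_\ast$ on homology as required.

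To prove the key claim, I would factor, for $i\ge 1$, the face map $d^{p+1}_i(x\otimes r)=(x\cdot\sigma_i)\otimes r$, where $\sigma_i=S_{n-p-2+i}\cdots S_{n-p-1}$, as right-multiplication by $\sigma_i$ (viewed as a self-map of $\Br_n\otimes_{\Br_{n-p-2}}\t$) followed by $d^{p+1}_0$. This self-map is well defined because $\sigma_i$ involves only the generators $S_k$ with $k\ge n-p-1$, each of which commutes with the subalgebra $\Br_{n-p-2}$ (its generators $S_j,U_j$ have $j\le n-p-3$, and the commutation relations with difference $\ge 2$ apply). Moreover $\sigma_i\in R\fS_n\subset\Br_n$, so the same factorisation exists on the symmetric-group side, intertwined with the Brauer side by $\iota$. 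Since $\iota_\ast$ is an isomorphism by \autoref{prop-tor-quotient-Homology-Sm}, to show right-multiplication by $\sigma_i$ induces the identity on $\Tor_\ast^{\Br_n}(\t,\Br_n\otimes_{\Br_{n-p-2}}\t)$ it suffices to check the corresponding statement for $\Tor_\ast^{R\fS_n}(\t,R\fS_n\otimes_{R\fS_{n-p-2}}\t)$, and that is exactly \autoref{lem-induced-identity} applied to the subgroup $\fS_{n-p-2}\subset\fS_n$ and the element $g=\sigma_i$ (which commutes with $\fS_{n-p-2}$). Hence $(d^{p+1}_i)_\ast=(d^{p+1}_0)_\ast$ for every $i$, and the alternating-sum computation finishes the cases $p\ge 0$.

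For the case $p=-1$, the differential $\partial^0=d^0_0$ is the single map $\Br_n\otimes_{\Br_{n-1}}\t\to\t$, $x\otimes r\mapsto xr$; unwinding the definitions of $\sigma$ and of $\iota_\ast$ from the proof of \autoref{prop-tor-quotient-Homology-Sm} shows on the chain level that this induces precisely the map $s_\ast\colon H_q(\fS_{n-1};\t)\to\Tor^{\Br_n}_q(\t,\t)$ coming from the inclusion $s\colon R\fS_{n-1}\to\Br_n$. The main subtlety is the one flagged above: because Shapiro's lemma fails for Brauer algebras, \autoref{lem-induced-identity} cannot be applied directly in the Brauer setting (its proof uses that a projective $RG$-resolution restricts to a projective $RH$-resolution, which would require flatness of $\Br_n$ over $\Br_{n-p-2}$). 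The route through $\iota_\ast$ and \autoref{prop-tor-quotient-Homology-Sm} is precisely what lets us reduce to the symmetric-group version where the lemma does apply.
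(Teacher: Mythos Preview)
Your proof is correct and follows essentially the same approach as the paper: both decompose the differential as a composite of right-multiplication by the $\sigma_i$ followed by the extension-of-tensor map $d^{p+1}_0$, transfer to the symmetric-group side via $\iota_\ast$ (since \autoref{lem-induced-identity} cannot be applied directly over $\Br_n$), invoke \autoref{lem-induced-identity} there to see each right-multiplication induces the identity, and conclude by the parity of the alternating sum. Your write-up is in fact slightly more explicit than the paper's about the commutation of $\sigma_i$ with $\Br_{n-p-2}$ and about why the detour through $\iota_\ast$ is necessary.
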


\begin{proof}
Recall that the differential of~$(C_n)_\ast$ originating at~$(C_n)_{p+1}$ is a composite
\[
    \Br_n \tens[\Br_{n-(p+2)}] \t 
    \longrightarrow 
    \Br_n \tens[\Br_{n-(p+2)}] \t 
    \longrightarrow
    \Br_n \tens[\Br_{n-(p+1)}] \t.
\]
The first map is the alternating sum of the maps induced by right multiplication by the symmetric group elements~$\sigma^{p+1}_i:=(S_{n-(p+1)+i-1}\cdots S_{n-(p+1)})$ for $i=0,\ldots,p+1$. 
The second map extends the tensor from~$\Br_{n-(p+2)}$ to~$\Br_{n-(p+1)}$.
The~$d^1$-differential originating at~$E^1_{p+1,q}$ is the induced composite, obtained by applying $\Tor^{\Br_n}_q(\t,-)$ to the differential of $(C_n)_\ast$.

Let $p\geq 0$.
Under the isomorphism $\iota_\ast$, $d^1$ becomes the entirely analogous composite with each $\Br_\ell$ replaced by the corresponding $R\fS_\ell$.
The first factor in the resulting composite is, by \autoref{lem-induced-identity}, the alternating sum of $(p+2)$ identity maps, and is therefore the identity for $p$ odd and zero for $p$ even.
Applying the isomorphism $\sigma$ from Shapiro's lemma now gives us the required description, where~$s$ stands for the \emph{stabilisation map} i.e.~$s_
\ast$ is induced by the inclusion $\fS_{n-p-2}\hookrightarrow \fS_{n-p-1}$.

Let $p=-1$.
Then the differential of $(C_n)_\ast$ originating at $(C_n)_0$ is simply the map
\[
    \varepsilon\colon \Br_n\otimes_{\Br_{n-1}}\t
    \to
    \t,
    \qquad
    x\otimes r\mapsto x\cdot r.
\]
The $d^1$ differential originating at $E_{0,q}$ is the induced map
\[
    \varepsilon_\ast\colon
    \Tor^{\Br_n}_q(\t,\Br_n\otimes_{\Br_{n-1}}\t)
    \longrightarrow
    \Tor^{\Br_n}_q(\t,\t).
\]
Under the isomorphism $\iota_\ast\circ\sigma$, this map is replaced with the composite $\varepsilon_\ast\circ\iota_\ast\circ \sigma$.
This can be computed explicitly, and gives the result.
\end{proof}

\begin{thm}\label{thm:BncongSn-1}
    In the range~$i\leq \frac{n-1}{2}$, the inclusion map $R\fS_{n-1}\hookrightarrow \Br_n$
    induces a surjection
    \[
     H_i(\fS_{n-1};\t)\longrightarrow \Tor_i^{\Br_n}(\t,\t).
    \]
\end{thm}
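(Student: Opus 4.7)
The target $\Tor_i^{\Br_n}(\t,\t)$ sits as $E^1_{-1,i}$ in the second spectral sequence already set up, and by \autoref{lemma-d-one} the only $d^1$ landing at $E^1_{-1,i}$ is the stabilisation map $s_\ast\colon H_i(\fS_{n-1};\t)\to \Tor_i^{\Br_n}(\t,\t)$. Hence $E^2_{-1,i}=\coker(s_\ast)$, and the theorem reduces to showing that $E^2_{-1,i}=0$ whenever $n\ge 2i+1$. My plan is to deduce this in two steps: (i) convergence of the spectral sequence shows $E^\infty_{-1,i}=0$ in the stated range, since $(-1)+i\le (n-3)/2$ is equivalent to $n\ge 2i+1$; (ii) there are no outgoing differentials from $(-1,i)$ (the complex $(C_n)_\ast$ stops at degree $-1$), so it suffices to prove that every higher incoming differential $d^r\colon E^r_{r-1,i-r+1}\to E^r_{-1,i}$ has vanishing source, which will give $E^\infty_{-1,i}=E^2_{-1,i}$.

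To carry out step (ii), I would compute the $E^2$-entries at $(p,q)$ with $p\ge 1$ using the alternating pattern of $d^1$ in \autoref{lemma-d-one}: zero from $(p+1,q)$ to $(p,q)$ when $p$ is even, and $s_\ast$ when $p$ is odd. A direct calculation yields
\[
E^2_{2k,q}\cong\ker\!\bigl(s_\ast\colon H_q(\fS_{n-2k-1};\t)\to H_q(\fS_{n-2k};\t)\bigr)\quad(k\ge 1),
\]
\[
E^2_{2k+1,q}\cong\coker\!\bigl(s_\ast\colon H_q(\fS_{n-2k-3};\t)\to H_q(\fS_{n-2k-2};\t)\bigr)\quad(k\ge 0).
\]
By Nakaoka's homological stability theorem for the symmetric groups~\cite{N}, each such kernel and cokernel vanishes provided the domain's index is at least $2q$.

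Substituting $p=r-1$ and $q=i-r+1$ into these bounds, and splitting by parity of $r$, I expect the hypothesis $n\ge 2i+1$ to force $E^2_{r-1,i-r+1}=0$ for every $r\ge 2$; the tightest bound will occur at $r=2$, where the source $E^2_{1,i-1}$ is a cokernel of $s_\ast\colon H_{i-1}(\fS_{n-3};\t)\to H_{i-1}(\fS_{n-2};\t)$, which vanishes exactly when $n-3\ge 2(i-1)$, i.e.\ precisely when $n\ge 2i+1$. Combining this with (i) yields $E^2_{-1,i}=E^\infty_{-1,i}=0$, so $s_\ast$ is surjective as claimed.

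The main obstacle I foresee is purely bookkeeping: confirming that the slightly different Nakaoka thresholds for the even and odd cases of $p$ both lie inside $n\ge 2i+1$, and checking that the $r=2$ case makes this range sharp for the present argument, so no better bound can be read off the spectral sequence without further input. Everything else is routine, since the heavy lifting (high connectivity of $C_n$, the Shapiro-replacement identification of $E^1_{p,q}$ for $p\ge 0$, and the identification of $d^1$) has already been done in the preceding sections.
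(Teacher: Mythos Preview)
Your proposal is correct and follows essentially the same route as the paper: both use the spectral sequence of the double complex $P_\ast\otimes_{\Br_n}(C_n)_\ast$, identify $E^1$ via \autoref{prop-tor-quotient-Homology-Sm} and the $d^1$-differentials via \autoref{lemma-d-one}, invoke Nakaoka's stability to kill the relevant entries in positive $p$-columns, and then combine with convergence to zero in total degree $\leq\frac{n-3}{2}$ to conclude that $E^2_{-1,i}=0$. The only difference is presentational: you carry out the $E^2$-bookkeeping explicitly (writing each $E^2_{p,q}$ for $p\geq 1$ as a kernel or cokernel of a stabilisation map and checking the Nakaoka threshold case by case), whereas the paper absorbs this step by citing the standard argument in~\cite{Maazen,RandalWilliamsConfig,Gan}.
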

    
\begin{proof}
    Consider the spectral sequence associated to the double complex $P_\ast \otimes_{\Br_n} (C_n)_\ast$ as discussed above, for~$P_\ast$ a projective resolution of~$\Br_n$ over~$\t$. Then the~$E^1$-page of the spectral sequence is as shown in \autoref{fig:spectralsequence}.
    By \autoref{lemma-d-one}, it will suffice to prove that the map $d^1\colon E^1_{0,q}\to E^1_{-1,q}$ is a surjection in degrees $q\leq \frac{n-1}{2}$.
        
 \begin{figure}
		\begin{tikzpicture}[scale=0.75]
			\draw[line width=0.2cm, gray!20, <->] (-4,7)--node[black, above, pos=0] {$q$}(-4,-1)--(12,-1) node[black, right] {$p$};
			\draw (-4,0) node {$0$};
			\draw (-2,-1) node {$-1$};
			\draw (1,-1) node {$0$};		
			\draw (-4,1) node {$1$};
			\draw (-4,2) node {$2$};
			\foreach \x in {-4,-2, 1, 4.5, 6, 8.5,11}	
			\draw (\x,3) node {$\vdots$};
			\foreach \x in {-1,0,1,2,4,5}
			\draw (7,\x) node {$\cdots$};	
			\draw (4.5,-1) node {$1$};	
			\draw (11,-1) node[rotate=90]  {$\scriptstyle{n}$};				
			\draw(8.5,-1) node[rotate=90]  {$\scriptstyle{n-1}$};
			\draw (-4.1,4) node {$\scriptstyle{\lfloor\frac{n-3}{2}\rfloor}$};
			\draw (-4.1,5) node {$\scriptstyle{\lfloor\frac{n-1}{2}\rfloor}$};
			\draw (-4,6) node {$\scriptstyle{\lfloor\frac{n+1}{2}\rfloor}$};
			\foreach \x in {1,2}
			\draw (-2,\x) node {$\scriptstyle{\Tor^{\Br_n}_{\x}(\t,\t)}$};
			\draw (-2,4) node {$\scriptstyle{\Tor^{\Br_n}_{\lfloor\frac{n-3}{2}\rfloor}(\t,\t)}$};
			\draw (-2,5) node {$\scriptstyle{\Tor^{\Br_n}_{\lfloor\frac{n-1}{2}\rfloor}(\t,\t)}$};
			\draw (-2,6) node {$\scriptstyle{\Tor^{\Br_n}_{\lfloor\frac{n+1}{2}\rfloor}(\t,\t)}$};
			\draw (-2,0) node {$\scriptstyle{\Tor_0^{\Br_n}(\t, \t)}$};
			\draw (1.25, 4) node {$\scriptstyle{H_{\lfloor\frac{n-3}{2}\rfloor}(\fS_{n-1})}$} (1.25, 5) node {$\scriptstyle{H_{\lfloor\frac{n-1}{2}\rfloor}(\fS_{n-1})}$};
			\draw (4.5, 4) node {$\scriptstyle{H_{\lfloor\frac{n-3}{2}\rfloor}(\fS_{n-2})}$} (4.5, 5) node {$\scriptstyle{H_{\lfloor\frac{n-1}{2}\rfloor}(\fS_{n-2})}$};
			\foreach \y in {0,1,2}
			\draw (1.25,\y) node {$\scriptstyle{H_\y(\fS_{n-1})}$} (4.5,\y) node {$\scriptstyle{H_\y(\fS_{n-2})}$} (8.75,\y) node {$\scriptstyle{H_\y(\fS_{0})}$} (11,\y) node {$0$};
			\foreach \y in {0,1,2}
			\foreach \x in {(10.5,\y),(7.8,\y), (0, \y)} 
			\draw[->] \x -- +(-0.3,0);;
			\draw[->] (-.2,4) -- +(-0.3,0);
			\draw[->] (-.2,5) -- +(-0.3,0);
			\draw[->] (-.2,6) -- +(-0.3,0);
			\foreach\x in {0,1,2,4,5}
	        \draw[->] (3,\x) -- node[midway, above] {$\scriptstyle{0}$} +(-0.5,0);
	        \foreach\x in {0,1,2,4}
	        \draw[->] (6.3,\x) -- node[midway, above] {$\scriptstyle{s_\ast}$} node[midway, below] {{\color{red}$\cong$}} +(-0.5,0);
	        \draw[->] (6.3,5) -- node[midway, above] {$\scriptstyle{s_\ast}$} +(-0.5,0);
		\end{tikzpicture}
\caption{The~$E^1$ page of the spectral sequence for the double complex, with differentials labelled as per \autoref{lemma-d-one}.}
\label{fig:spectralsequence}
\end{figure}
    
    Homological stability for the symmetric groups~\cite{N} states that the stabilisation map
    \[
    s_\ast\colon H_i(\fS_{m-1};\t)\longrightarrow H_i(\fS_{m};\t)
    \]
    is an isomorphism in the range~$i\leq\frac{m-1}{2}$. 
    \autoref{lemma-d-one} then tells us that the differential $d^1\colon E^1_{p+1,q}\to E^1_{p,q}$ is an isomorphism in degrees $q\leq \frac{n-p-2}{2}$ when $p$ is even, and is $0$ when $p$ is odd.
    It then follows, exactly as in many proofs of homological stability for the symmetric groups (\cite[IV~3.1]{Maazen}, \cite[Theorem 5.1]{RandalWilliamsConfig}, \cite[Section 3]{Gan}), that from the $E^2$-term onwards, when $q\leq\frac{n-1}{2}$ all remaining sources of differentials into the $(-1)$ column are zero.
    
    Putting this together with the fact that the spectral sequence converges to zero in the range~$p+q\leq \frac{n-3}{2}$ (and noting that~$\frac{n-3}{2}=-1+\frac{n-1}{2}$) it follows that the $(-1)$-column on the~$E^2$-page must be zero in the range~$q\leq \frac{n-1}{2}$. Therefore the~$d^1$-differential into the $(-1)$-column is a surjection in this range.
\end{proof}

\begin{proof}[Proof of \autoref{thma}]
\autoref{thm:BncongSn-1} showed that the map
\[ H_i(\fS_{n-1};\t) \longrightarrow \Tor_i^{\Br_n}(\t,\t)\]
that is induced by the inclusion $R\fS_{n-1}\hookrightarrow \Br_n$ is surjective when $i\leq\frac{n-1}{2}$.
Observe that the inclusion $R\fS_{n-1} \hookrightarrow \Br_n$ factors as $R\fS_{n-1}\hookrightarrow R\fS_n \hookrightarrow \Br_n$, so that the diagram
\[
    \xymatrix{
        H_i(\fS_{n-1};\t) \ar[d]\ar[r] &\Tor_i^{\Br_n}(\t,\t)\\ H_i(\fS_n;\t)\ar[ur]
    }
\]
commutes by functoriality. 
It follows that the map 
\[\iota_\ast H_i(\fS_{n};\t) \longrightarrow \Tor_i^{\Br_n}(\t,\t)\]
induced by the inclusion~$R\fS_n\hookrightarrow \Br_n$ is surjective in the same range $i\leq \frac{n-1}{2}$ as $H_i(\fS_{n-1};\t) \to \Tor_i^{\Br_n}(\t,\t)$.

Together with injectivity of $\iota_\ast$, this implies that 
\[\iota_\ast H_i(\fS_{n};\t) \longrightarrow \Tor_i^{\Br_n}(\t,\t)\]
is an isomorphism if  $i\leq \frac{n-1}{2}$ as required.
\end{proof}

\bibliographystyle{alpha}
\bibliography{repstab.bib}		

\def\cprime{$'$}
\begin{thebibliography}{RWW17}

\bibitem[Ad69]{Arnold}
V.~I. Arnol\cprime~d.
\newblock The cohomology ring of the group of dyed braids.
\newblock {\em Mat. Zametki}, 5:227--231, 1969.

\bibitem[Ben91]{Benson}
D.~J. Benson.
\newblock {\em Representations and cohomology. {I}}, volume~30 of {\em
  Cambridge Studies in Advanced Mathematics}.
\newblock Cambridge University Press, Cambridge, 1991.
\newblock Basic representation theory of finite groups and associative
  algebras.

\bibitem[BH20]{HepworthBoydStability}
Rachael Boyd and Richard Hepworth.
\newblock Homological stability for {T}emperley-{L}ieb algebras, 2020.
\newblock {\tt arXiv:2006.04256}.

\bibitem[BP72]{BarrattPriddy}
Michael Barratt and Stewart Priddy.
\newblock On the homology of non-connected monoids and their associated groups.
\newblock {\em Comment. Math. Helv.}, 47:1--14, 1972.

\bibitem[Bra37]{B}
Richard Brauer.
\newblock On algebras which are connected with the semisimple continuous
  groups.
\newblock {\em Ann. of Math. (2)}, 38(4):857--872, 1937.

\bibitem[Bro56]{Br}
William~P. Brown.
\newblock The semisimplicity of {$\omega_f^n$}.
\newblock {\em Ann. of Math. (2)}, 63:324--335, 1956.

\bibitem[CF13]{CF}
Thomas Church and Benson Farb.
\newblock Representation theory and homological stability.
\newblock {\em Adv. Math.}, 245:250--314, 2013.

\bibitem[Cha80]{Charney}
Ruth~M. Charney.
\newblock Homology stability for {${\rm GL}_{n}$} of a {D}edekind domain.
\newblock {\em Invent. Math.}, 56(1):1--17, 1980.

\bibitem[Chu12]{C}
Thomas Church.
\newblock Homological stability for configuration spaces of manifolds.
\newblock {\em Invent. Math.}, 188(2):465--504, 2012.

\bibitem[Far79]{Farmer}
Frank~D. Farmer.
\newblock Cellular homology for posets.
\newblock {\em Math. Japon.}, 23(6):607--613, 1978/79.

\bibitem[FM94]{QuillenQ}
Eric~M. Friedlander and Barry Mazur.
\newblock Filtrations on the homology of algebraic varieties.
\newblock {\em Mem. Amer. Math. Soc.}, 110(529):x+110, 1994.
\newblock With an appendix by Daniel Quillen.

\bibitem[Gal11]{Galatius}
S{\o}ren Galatius.
\newblock Stable homology of automorphism groups of free groups.
\newblock {\em Ann. of Math. (2)}, 173(2):705--768, 2011.

\bibitem[Gan17]{Gan}
Wee~Liang Gan.
\newblock Complex of injective words revisited.
\newblock {\em Bull. Belg. Math. Soc. Simon Stevin}, 24(1):127--134, 2017.

\bibitem[GRW18]{GRW}
S\o{}ren Galatius and Oscar Randal-Williams.
\newblock Homological stability for moduli spaces of high dimensional
  manifolds. {I}.
\newblock {\em J. Amer. Math. Soc.}, 31(1):215--264, 2018.

\bibitem[Har85]{Har}
John~L. Harer.
\newblock Stability of the homology of the mapping class groups of orientable
  surfaces.
\newblock {\em Ann. of Math. (2)}, 121(2):215--249, 1985.

\bibitem[Hat95]{H}
Allen Hatcher.
\newblock Homological stability for automorphism groups of free groups.
\newblock {\em Comment. Math. Helv.}, 70(1):39--62, 1995.

\bibitem[Hep20]{HepworthIH}
Richard Hepworth.
\newblock Homological stability for {I}wahori-{H}ecke algebras, 2020.
\newblock {\tt arXiv:2006.04252}.

\bibitem[HV98]{HVRational}
Allen Hatcher and Karen Vogtmann.
\newblock Rational homology of {${\rm Aut}(F_n)$}.
\newblock {\em Math. Res. Lett.}, 5(6):759--780, 1998.

\bibitem[HV04]{HV}
Allen Hatcher and Karen Vogtmann.
\newblock Homology stability for outer automorphism groups of free groups.
\newblock {\em Algebr. Geom. Topol.}, 4:1253--1272, 2004.

\bibitem[HW89]{HanlonWales}
Phil Hanlon and David Wales.
\newblock On the decomposition of {B}rauer's centralizer algebras.
\newblock {\em J. Algebra}, 121(2):409--445, 1989.

\bibitem[HW10]{HatcherWahl}
Allen Hatcher and Nathalie Wahl.
\newblock Stabilization for mapping class groups of 3-manifolds.
\newblock {\em Duke Math. J.}, 155(2):205--269, 2010.

\bibitem[Maa79]{Maazen}
Hendrik Maazen.
\newblock {\em Homology stability for the general linear group}.
\newblock Utrecht University, 1979.
\newblock Thesis (Ph.D.).

\bibitem[Mar91]{M91}
Paul Martin.
\newblock {\em Potts models and related problems in statistical mechanics},
  volume~5 of {\em Series on Advances in Statistical Mechanics}.
\newblock World Scientific Publishing Co., Inc., Teaneck, NJ, 1991.

\bibitem[McD75]{McDuff}
Dusa McDuff.
\newblock Configuration spaces of positive and negative particles.
\newblock {\em Topology}, 14:91--107, 1975.

\bibitem[Nak60]{N}
Minoru Nakaoka.
\newblock Decomposition theorem for homology groups of symmetric groups.
\newblock {\em Ann. of Math. (2)}, 71:16--42, 1960.

\bibitem[Pat20]{PatztRepstab}
Peter Patzt.
\newblock Representation stability for diagram algebras, 2020.
\newblock {\tt arXiv:2009.06346}.

\bibitem[RW13]{RandalWilliamsConfig}
Oscar Randal-Williams.
\newblock Homological stability for unordered configuration spaces.
\newblock {\em Q. J. Math.}, 64(1):303--326, 2013.

\bibitem[RWW17]{RW}
Oscar Randal-Williams and Nathalie Wahl.
\newblock Homological stability for automorphism groups.
\newblock {\em Adv. Math.}, 318:534--626, 2017.

\bibitem[TL71]{TL}
H.~N.~V. Temperley and E.~H. Lieb.
\newblock Relations between the ``percolation'' and ``colouring'' problem and
  other graph-theoretical problems associated with regular planar lattices:
  some exact results for the ``percolation'' problem.
\newblock {\em Proc. Roy. Soc. London Ser. A}, 322(1549):251--280, 1971.

\bibitem[vdK80]{vdK}
Wilberd van~der Kallen.
\newblock Homology stability for linear groups.
\newblock {\em Invent. Math.}, 60(3):269--295, 1980.

\bibitem[Wah08]{WahlUnoriented}
Nathalie Wahl.
\newblock Homological stability for the mapping class groups of non-orientable
  surfaces.
\newblock {\em Invent. Math.}, 171(2):389--424, 2008.

\bibitem[Wei94]{Wei}
Charles~A. Weibel.
\newblock {\em An introduction to homological algebra}, volume~38 of {\em
  Cambridge Studies in Advanced Mathematics}.
\newblock Cambridge University Press, Cambridge, 1994.

\bibitem[Wen88]{Wen}
Hans Wenzl.
\newblock On the structure of {B}rauer's centralizer algebras.
\newblock {\em Ann. of Math. (2)}, 128(1):173--193, 1988.

\end{thebibliography}

\end{document}